\newtheorem{theorem}{Theorem}[section]
\newtheorem{lemma}[theorem]{Lemma}
\newtheorem{proposition}[theorem]{Proposition}
\newtheorem{corollary}[theorem]{Corollary}
\theoremstyle{definition}
\newtheorem{remark}[theorem]{Remark}
\numberwithin{equation}{section}
\newcommand*{\de}{\partial}
\newcommand*{\N}{\mathbb{N}}
\newcommand*{\R}{\mathbb{R}}
\newcommand*{\eps}{\varepsilon}
\newcommand*{\weak}{\rightharpoonup}
\newcommand*{\loc}{{\text{\upshape{loc}}}}
\newcommand*{\lap}{\Delta}
\newcommand{\res}{\mathbin{\vrule height 1.6ex depth 0pt width 0.13ex\vrule height 0.13ex depth 0pt width 1.3ex}}
\DeclareMathOperator{\supp}{spt}
\author{Federico Franceschini}
\address{ETH, Rämistrasse 101, 8092 Zürich, Switzerland}
\email{federico.franceschini@math.ethz.ch}
\author{Joaquim Serra}
\address{ETH, Rämistrasse 101, 8092 Zürich, Switzerland}
\email{joaquim.serra@math.ethz.ch}
\title{Free boundary partial regularity in the thin obstacle problem}
\begin{document}
\begin{abstract}
    For the thin obstacle problem in $\R^n$, $n\ge2$, we prove that at {\em all} free boundary points, with the exception of a $(n-3)$-dimensional set, the solution differs from its blow-up by higher order corrections. This expansion entails a $C^{1,1}$-type free boundary regularity result, up to a codimension 3 set.
\end{abstract}
\maketitle
\tableofcontents
\section{Introduction}
    \subsection{The thin obstacle problem}
        This paper is concerned with the (local) free boundary regularity for the thin obstacle problem. 
        Namely, let $n\ge 2$, and consider $u\colon B_2 \to \R$ ($B_r\subset \R^n$ denotes the ball of radius $r$ centered at $0$) with $\int_{B_2} |\nabla u|^2 dx <\infty$ and such that the following holds:
        \begin{equation} \label{whtoihwohtw2}
            \begin{split}
            \int_{B_2} |\nabla v|^2 \ge \int_{B_2} |\nabla u|^2& \quad \mbox{for all $v\in H^1(B_2)$}\\ 
            &\hspace{-10mm}\mbox{satisfying $v\ge 0$ on $\{x_n=0\}$ and $v=u$ on $\partial B_2$.}
            \end{split}
        \end{equation}
        
        In this framework the linear space $\{x_n=0\}$ is called the \textit{thin space} and the set
        \begin{equation*}
            \Lambda(u):=\{u=0,x_n=0\}
        \end{equation*}
        is called the \textit{contact set}. When there is no ambiguity, we will write $\Lambda$ instead of $\Lambda(u)$.
        This paper is concerned with the fine structure and regularity properties of boundary of the contact set inside the thin space, which is called the {\em free boundary} (actually in this paper we will choose a slightly different convention and define the free boundary as the set of points of frequency $>1$. This set always contains the boundary of the contact set, but may be larger in some cases).
        
        Now, one can easily see that if $u$ satisfies \eqref{whtoihwohtw2} then $u^{\rm even}(x',x_n) = \frac{1}2 (u(x',x_n) =u(x',x_n))$ and  $u^{\rm odd}(x',x_n) = \frac{1}2 (u(x',x_n) -u(x',x_n))$ also satisfy \eqref{whtoihwohtw2}. In addition, $u^{\rm even}$ has the same contact set as $u$, while $u^{\rm odd}$ is a harmonic odd function. Hence, we may (and do) assume that $u\equiv u^{\rm even}$. In other words 
        $u(x',x_n)= u(x',-x_n)$.
        
        It is well-known (see for instance \cite{AC,ACS}) that $u$ as above is a (weak) solution of the following problem, which is called the {\rm thin obstacle problem}:
        \begin{equation}\label{eq:thinobstacle}
            \begin{cases}
                    \lap u \leq 0 & \text{ in }B_2,\\
                    u \geq 0 & \text{ in } \{x_n=0\}\cap B_2,\\
                    \lap u = 0 & \text{ in } (\{u>0\} \cup \{x_n \neq 0\}) \cap B_2,\\
                    u(x',x_n)=u(x',-x_n) & \text{ for all }(x',x_n)\in B_2.
            \end{cases}
        \end{equation}

        \subsection{Motivations and connections to other problems}
        
        Problems of the type \eqref{whtoihwohtw2} arise in linear elasticity (the Signorini problem) when describing the equilibrium configuration of an elastic body, which rests on a rigid frictionless horizontal plane and subject only to its mass forces. It also arises in optimal control of temperature across a surface and in the modelling of semipermeable membranes. See for instance \cite{ACS,Fe,Se} and references therein.
        
        Since the 2019 paper \cite{FS},  there is new main motivational driver for \eqref{whtoihwohtw2} related to the fine study of singular points in the obstacle and Stefan problems. See \cite{FS,FRS1, FRS2}.

        \subsection{Almgren's mononotonicity formula and its first consequences}
        A key contribution from \cite{ACS} was to bring to light a strong analogy between \eqref{whtoihwohtw2} and Almgren's $Q$-valued harmonic functions\footnote {Appearing in his theory of area-minimizing integer rectifiable currents.}. In particular \cite{ACS} showed that Almgren's frequency had a central role  in the study \eqref{whtoihwohtw2}, as explained next.
        
        Throughout the paper we will use the following functionals\footnote{These functionals have been often considered in the previous literature on the thin obstacle problem.}:
        \begin{equation*}\label{eq:functionals}
            H_\mu(r,w):=r^{-2\mu}\int_{\de B_1}w_r^2, \ D(r,w):=\int_{B_1}|\nabla w_r|^2 \ \text{ and }\  \phi(r,w):=\frac{D(r,u)}{H_0(r,w)},
        \end{equation*}
        defined for all $r\in(0,1)$ and $\mu\in\R$.  Whenever no ambiguity is possible $d\mathcal H^n$ and $d\mathcal H^{n-1}$ are omitted, respectively, from solid integrals and boundary integrals. We often write $H(r,w)=H_0(r,w)$. 
        
        A key observation from \cite{ACS} was that for every $x_\circ\in\Lambda(u)$  Almgren's monotonicity formula holds for $w = u(x_\circ +\,\cdot\,)$. More precisely, 
        \begin{equation}\label{whgiohwiohw}
            \frac{d}{dr}\phi(r, w)) \ge 2 r^{3-2n} \frac{\big\{\int_{\partial B_r} w_\nu^2\int_{\partial B_r} w^2  -\big(\int_{\partial B_r}ww_\nu\big)^2 \big\}}{H(r, w)^2 } \ge 0.
        \end{equation} 
        Hence, for every $x_\circ\in\Lambda(u)$, the function $r\mapsto \phi(r,u(x_\circ+\cdot))$ is nonnegative and nondecreasing, so it is well-defined
        \[
            \phi(0+,u(x_\circ+\cdot)) : = \lim_{r\to 0+}\phi(r,u(x_\circ+\cdot)).
        \]
        This induces a natural stratification in $\Lambda = \cup_{\lambda\ge 0} \Lambda_\lambda$, where
        \begin{equation*}
            \Lambda_\lambda(u):=\{x_\circ\in\Lambda(u) : \phi(0^+,u(x_\circ+\cdot))=\lambda\},
        \end{equation*}
        and we say that a point $x_\circ$  belonging to $\Lambda_\lambda(u)$ is {\it a point of frequency} $\lambda$. Similarly, given $I \subset [0, \infty)$ let us denote
        \[
        \Lambda_I : = \cup_{\lambda\in I} \Lambda_\Lambda.
        \]
        
        The monotonicity formula \eqref{whgiohwiohw}  yields the existence and homogeneity of blow-ups. More precisely, given $x_\circ\in\Lambda(u)$ one defines {\em blow-ups at $x_0$} as accumulation points in $L^2_{\rm loc}(\R^n)$ of the type
        \begin{equation}\label{whtiowhio657} 
        q : = \lim_{r_k \downarrow 0}  \frac{u(x_\circ +r_k \,\cdot\,)}{\|u(x_\circ + r_k\,\cdot\,)\|_{L^2(\partial B_1)}}.
        \end{equation}

        It is not difficult to show using  \eqref{whgiohwiohw} --- see for instance \cite{ACS} --- that for every sequence $r_\ell\downarrow 0$ there is a subsequence such that the limit \eqref{whtiowhio657} exists in $L^2_{\rm loc}$. Moreover,  all blow-up profiles $q$ at a given point $x_\circ\in \Lambda_\lambda$
        are $\lambda$-homogeneous\footnote{In this context $\lambda$-homogeneous will always mean positively $\lambda$-homogeneous, that is,  $q(tx) = t^\lambda q(x)$ for all $x\in \R^n$  and $t>0$.} solutions 
        of \eqref{eq:thinobstacle}.
        
        We emphasize that blow-ups profiles $q$ at a given point $x_\circ$ may, a priori, depend on the (sub)sequence $r_k \downarrow 0$, and hence a given point might have multiple blow-up profiles. 
        
        \subsection{The question of free boundary regularity}
        One of the main contributions of Athanasopoulous, Caffarelli, and Salsa in \cite{ACS} was to provide a very detailed understanding of points of frequency less than two. More precisely, they proved that
        \[
        \Lambda_{[0,2)} = \Lambda_1 \cup \Lambda_{3/2}. 
        \]
        Moreover, it follows --- using a standard Federer-type ``dimension reduction'' argument based Almgren's  frequency formula --- that the set 
        \[\Lambda\setminus\Lambda_1 = \Lambda_{3/2}\cup\Lambda_{[2,\infty)}\]
        is closed and has dimension $n-2$. We will call this set the {\em free boundary} as it always contains (and typically equals) the (relative) topological boundary of the contact set $\Lambda(u)$\footnote{Some authors would exclude from the free boundary odd frequency points which may lie in the interior of $\Lambda$, but there is of course no harm in including such points in our analysis.}. The main question is:
        \begin{quote}
            {\em Does the free boundary in the thin obstacle problem enjoy $C^\infty$ regularity, perhaps after removing lower dimensional sets? }
        \end{quote}
        
        A first very important contribution in this direction was given by \cite{ACS}. They showed that $\Lambda_{3/2}$ is a $(n-2)$-dimensional submanifold of class $C^{1,\alpha}$, for some $\alpha>0$. Later in \cite{KPS} regularity was bootstrapped to analytic (see also \cite{DS1,DS2}, where $C^\infty$ regularity is proven with a different method). Unluckily, in general the set $\Lambda_{[2,\infty)}$ is not small, nor the set $\Lambda_{3/2}$ is necessarily there. There are known examples of solutions $v$ of \eqref{eq:thinobstacle} for which $\Lambda_{3/2}(v)=\emptyset$ and $\Lambda(v)=\{x_n=0,x_{n-1}\leq 0\}$. Thus there are ``genuine'' smooth free boundaries completely made of points in $\Lambda_{[2,\infty)}$.
        
        Although several important works (see Section \ref{sec:prevresults} for a detailed account of these) had investigated the fine structure of certain subsets of $\Lambda_{[2,\infty)}$, all these results were only applicable to rather exceptional subsets of $\Lambda_{[2,\infty)}$  (and in particular they did not cover the ``genuine'' examples referred above).
        
        The results of this paper, described in the next section, establish for the first time enhanced regularity properties of the whole free boundary except for a lower ($n-3$) dimensional set of ``bad'' points.

        \subsection{Main result} In this paper we are concerned with establishing that all points for $\Lambda_{[2,\infty)}$ outside of a $(n-3)$-dimensional satisfied strongly enhanced regularity properties: in particular they can be covered $(n-2)$-manifolds of class $C^{1,1}$ . 
        The question of how to improve this $C^{1,1}$ regularity to $C^\infty$ by using more technical versions of the new methods introduced in this paper is deferred to a future work.
        Here we focus for simplicity in proving the result below, which follows by applying our main new ideas in a clean, non-technical, setting.
        
        Our main result is the following
        \begin{theorem}\label{thm:mainintro}
            Let $u$ solve the thin obstacle problem \eqref{eq:thinobstacle} and let $\Lambda:=\{u=0,x_n=0\}$ be its contact set, $n\geq 3$. Then there is $\Sigma\subset\Lambda$ such that
            \begin{itemize}
                \item the Hausdorff dimension of $\Sigma$ is at most equal to $n-3$,
                \item if $x_\circ \in\Lambda\setminus \Sigma$ and $u$ has frequency $\lambda$ at $x_\circ$, then $ \lambda\in\{\tfrac{3}{2},\tfrac{7}{2},\tfrac{11}{2},\ldots\}\cup\{1,2,3,4,\ldots\}$ and 
                \begin{equation}\label{eq:expansionintro}
                    u(x_\circ+y)=q(y)+O(|y|^{\lambda +1 })\quad \text{ as }|y|\downarrow 0,
                \end{equation}
                where $q$ is a 2D, $\lambda$-homogeneous and non-zero solution of the thin obstacle problem\footnote{Such solutions are classified and given by explicit formulas (see \Cref{lem:2Dsolutionsclassification}).  }. 
            \end{itemize}
        \end{theorem}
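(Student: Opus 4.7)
The plan is to combine a Federer-type dimension reduction on the stratification of free boundary points by the translation-invariance of their blow-ups with a quantitative convergence rate to a single 2D profile at the ``good'' points.

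As a first step, using Almgren's monotonicity \eqref{whgiohwiohw} and the $L^2_{\loc}$-compactness of normalized blow-ups \eqref{whtiowhio657}, I would define
\[
    \Sigma := \bigl\{x_\circ \in \Lambda : \text{no blow-up of $u$ at $x_\circ$ is invariant under $n-2$ independent translations of the thin space}\bigr\}
\]
and show $\dim_{\mathcal{H}} \Sigma \leq n-3$ by the standard Federer argument, stratifying $\Sigma$ by the maximal dimension $k \in \{0,1,\ldots,n-3\}$ of the thin-space invariance subspace of some blow-up at $x_\circ$ and exploiting the upper semicontinuity of this dimension under rescaling, together with the homogeneity of every limit profile. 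A point of $\Lambda \setminus \Sigma$ therefore admits at least one 2D $\lambda$-homogeneous blow-up $q$, and the explicit classification announced in \Cref{lem:2Dsolutionsclassification} then forces $\lambda \in \{\tfrac32,\tfrac72,\tfrac{11}2,\ldots\} \cup \{1,2,3,\ldots\}$.

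The core task is to upgrade ``existence of some 2D blow-up'' into both uniqueness of the blow-up $q$ at $x_\circ$ and the quantitative estimate
\[
    \bigl\|u(x_\circ + r\,\cdot\,) - q(r\,\cdot\,)\bigr\|_{L^2(\partial B_1)} \leq C\, r^{\lambda+1}.
\]
The natural route is a Weiss-type epiperimetric inequality, or equivalently a \L{}ojasiewicz--Simon inequality, centered at the 2D profile $q$: a one-step energy improvement between two consecutive dyadic scales iterates into a geometric decay of the $L^2$-distance from rescalings of $u$ at $x_\circ$ to a single, fixed element of the classified family. Once the spherical $L^2$-rate is established, classical interior/boundary $C^{1,\alpha}$ estimates for the thin obstacle problem, applied to rescaled solutions on $B_1$, upgrade it to the pointwise bound \eqref{eq:expansionintro}.

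The main obstacle is the epiperimetric / \L{}ojasiewicz step. At each admissible $\lambda$ the set of $\lambda$-homogeneous 2D profiles carries a nontrivial family of ``directions'' that must be quotiented out before any spectral gap becomes available on the linearization around $q$: translations along the $(n-2)$ invariant thin-space directions, rotations of the transverse $2$-plane, and, for integer $\lambda$, additional kernel modes coming from $\lambda$-homogeneous harmonic polynomials orthogonal to $q$. The announced new ideas of the paper presumably enter here to control these kernel directions cleanly and uniformly at every point of $\Lambda \setminus \Sigma$, with the resulting energy decay propagating the unique blow-up and the expansion \eqref{eq:expansionintro} across the good locus; the remaining bad points are absorbed, by the first step, into the single $(n-3)$-dimensional set $\Sigma$.
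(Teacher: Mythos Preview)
Your outline has the right large-scale architecture (dimension reduction to isolate a small bad set, then a quantitative rate at the remaining points), but the core step is not actually carried out and the mechanism you propose for it is not the one used in the paper --- and, for the half-odd-integer frequencies that constitute the main novelty, no epiperimetric/\L{}ojasiewicz inequality at 2D cones is available in the literature. So as written this is a genuine gap, not a different proof.

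Two concrete issues:

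\medskip
\noindent\textbf{(1) The definition of $\Sigma$ is too coarse.} Knowing that \emph{some} blow-up at $x_\circ$ is 2D does not by itself yield a rate. The paper's bad set is strictly larger than ``no 2D blow-up'': in addition to $\Lambda_{\rm other}$ and an even-frequency singular piece $\Sigma_{\rm even}$, it removes the sets $\Lambda^*_\lambda$ of points where, for every $\eps>0$, $\Lambda_\lambda$ is $(n-3)$-flat (in the sense of property $\mathcal P^\lambda_\eps$) at asymptotically full density of dyadic scales. These sets are shown to be $(n-3)$-dimensional via a quantitative covering lemma (not just classical Federer reduction). Outside $\Lambda^*_\lambda$ one has a \emph{positive density of non-flat scales}, and this --- not mere existence of a 2D blow-up --- is what drives the rate.

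\medskip
\noindent\textbf{(2) The rate is obtained by a two-step mechanism that is not epiperimetric.} For $\lambda\in\tfrac32+2\N$ the paper proceeds as follows.
\emph{Step 1 (accelerated decay to $\lambda+\beta$).} A dichotomy at each dyadic scale: either $\Lambda_\lambda\cap B_r$ is $\eps$-flat in an $(n-3)$-plane, or the $L^2$-distance $A(r,u)$ to the nearest 2D $\lambda$-cone decays from $r$ to $r/2$ with exponent $\lambda+\sigma(\eps)$. At flat scales a Monneau-type monotonicity for $H_\mu(\cdot,u-q)$ with $\mu<\lambda$ gives the weak decay $\lambda-\gamma$. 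Since $x_\circ\notin\Lambda^*_\lambda$ the non-flat scales have positive density, and balancing $\sigma$ against $\gamma$ yields $A(r,u)\le C r^{\lambda+\beta}$ for some $\beta>0$.
\emph{Step 2 (boost to $\lambda+1$).} With $w=u-q$, the paper proves approximate monotonicity of a \emph{truncated} Almgren frequency $\phi_\gamma(r,w)$, despite $q$ being non-harmonic. The new ingredient is the nonlinear estimate $\int_{B_1}|w_r\Delta w_r|\le C r^{\alpha\beta/\lambda}\|w_r\|_{L^2}^2$, obtained from barrier arguments localizing $\operatorname{spt}\Delta w$ to a thin cylinder and a H\"older decay there. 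Monotonicity forces the limiting frequency of $w$ into $\frac12+\N$, hence $\ge\lambda+1$, which gives \eqref{eq:expansionintro}. For odd $\lambda$ the paper invokes Savin--Yu to supply Step~1 and then runs Step~2; for even $\lambda$ it quotes Fern\'andez-Real--Jhaveri.

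The ``kernel directions'' you identify (translations, rotations of the 2-plane) are handled in Step~1 by optimizing over $\tau,S$ in the definition of $A$ and by the orthogonality conditions (3) in the limiting linearized problem; they are not quotiented out via a spectral gap/\L{}ojasiewicz argument.
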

        
        We remark that uniqueness of blow-ups\footnote{In the sense that they it does not depend on the subsequence.} --- up to a co-dimension 3 set --- was not known, while our result implies it immediately.
        
        The previous theorem entails the following
        \begin{corollary}\label{cor:covering}
            The free boundary  of the thin obstacle problem in $\R^n$ is covered by countably many $(n-2)$-submanifolds of class $C^{1,1}$, with the exception of a set of Hausdorff dimension at most equal to $n-3$.
        \end{corollary}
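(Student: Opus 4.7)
The plan is to deduce the corollary from the pointwise expansion \eqref{eq:expansionintro} of Theorem \ref{thm:mainintro} via a Whitney-extension argument on countably many uniformly good subsets.

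First I would stratify: by Theorem \ref{thm:mainintro}, outside $\Sigma$ the free boundary decomposes as a disjoint union of strata $\Lambda_\lambda$ with $\lambda$ ranging over the countable set $\{\tfrac{3}{2},\tfrac{7}{2},\ldots\}\cup\{2,3,4,\ldots\}$ (frequency $1$ is excluded by the convention that the free boundary is $\Lambda_{(1,\infty)}$). Since a countable union of $C^{1,1}$ $(n-2)$-submanifolds is again such a union, it suffices to cover each $\Lambda_\lambda\setminus\Sigma$. For each admissible $\lambda$ and each $m\in\N$ I would introduce the Borel set
\[
E_{\lambda,m}:=\bigl\{x_\circ\in\Lambda_\lambda\setminus\Sigma :\ |u(x_\circ+y)-q_{x_\circ}(y)|\le m|y|^{\lambda+1}\ \forall|y|\le 1/m,\ \|q_{x_\circ}\|_{L^\infty(B_1)}\ge 1/m\bigr\},
\]
which, by \eqref{eq:expansionintro} and the non-triviality of the blow-up $q_{x_\circ}$, exhausts $\Lambda_\lambda\setminus\Sigma$ as $m\to\infty$.

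Next, on a fixed $E=E_{\lambda,m}$, the goal is a Whitney-type two-point compatibility for the field of osculating polynomials $\{q_{x_\circ}\}_{x_\circ\in E}$. For any $x_1,x_2\in E$ with $|x_1-x_2|\le (2m)^{-1}$, plugging $y+(x_2-x_1)$ into the expansion at $x_1$ and $y$ into that at $x_2$ and subtracting yields
\[
\bigl|q_{x_1}\bigl((x_2-x_1)+y\bigr)-q_{x_2}(y)\bigr|\le C(m)\bigl(|x_1-x_2|+|y|\bigr)^{\lambda+1},\qquad |y|\le (2m)^{-1}.
\]
Since the $q_{x_\circ}$'s live in the finite-dimensional space of $2$D, $\lambda$-homogeneous thin-obstacle solutions, this is the standard Whitney compatibility at order $\lambda+1$; it forces the $(n-2)$-dimensional subspace $V_{x_\circ}\subset\{x_n=0\}$ along which $q_{x_\circ}$ is translation invariant to vary Lipschitz-continuously in $x_\circ\in E$.

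Finally, I would apply the Whitney extension theorem in its $C^{1,1}$ version to produce, on a neighborhood of $E$ in $\R^n$, a $C^{1,1}$ map $F\colon\R^n\to\R^2$ whose $2$-jet at every $x_\circ\in E$ is prescribed by $q_{x_\circ}$ along the two active coordinates orthogonal to $V_{x_\circ}$. The implicit function theorem then identifies $F^{-1}(0)$ as a $C^{1,1}$ $(n-2)$-submanifold containing a relative neighborhood of any given point of $E$; a standard countable covering together with the union over $(\lambda,m)$ yields the corollary. The hard part will be the two-point compatibility estimate above: converting the pointwise expansion into the uniform two-point bound on each $E_{\lambda,m}$, which requires some care because the polynomials $q_{x_1}$ and $q_{x_2}$ may have different symmetry axes. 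After this, Whitney extension and the implicit function theorem act as essentially black boxes.
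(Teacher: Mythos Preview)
Your overall strategy---stratify by $\lambda$ and by a uniformity constant, derive a two-point compatibility estimate from the expansion, then Whitney-extend and apply the implicit function theorem---is exactly what the paper does. However, your packaging of the Whitney step is not correct as written, and the implicit function theorem would fail if you carried it out literally.

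The problem is twofold. First, for $C^{1,1}$ Whitney extension you prescribe a $1$-jet, not a $2$-jet; a $C^{1,1}$ map has no classical $2$-jet. Second, and more seriously, $q_{x_\circ}$ is $\lambda$-homogeneous with $\lambda\ge 3/2$, so $q_{x_\circ}(0)=0$ and $\nabla q_{x_\circ}(0)=0$. If you literally ``prescribe the jet of $F$ by $q_{x_\circ}$'' you get $DF(x_\circ)=0$, and $F^{-1}(0)$ is not a manifold by the implicit function theorem. The map $F\colon\R^n\to\R^2$ you describe cannot be built from $q_{x_\circ}$ in the way you suggest.

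What the paper actually does (and what you should do) is extract from $q_{x_\circ}$ only the \emph{direction} of its $2$D profile: for half-odd-integer $\lambda$, write $q_{x_\circ}(x',0)=c_{x_\circ}(e_{x_\circ}\cdot x')_+^\lambda$ with $e_{x_\circ}\in\mathbb S^{n-2}\subset\{x_n=0\}$, and prescribe the $1$-jet $(0,e_{x_\circ})$ for a \emph{scalar} function $f\colon\R^{n-1}\to\R$ in the thin space. The two-point estimate you wrote down then yields both Whitney compatibility conditions:
\[
|e_{x_0}-e_{x_1}|\le C|x_0-x_1|\qquad\text{and}\qquad |e_{x_1}\cdot(x_0-x_1)|\le C|x_0-x_1|^2.
\]
You mention only the first (Lipschitz variation of $V_{x_\circ}$); the second---that $x_0-x_1$ is quadratically close to being tangent to $V_{x_1}$---is equally essential for $C^{1,1}$ extension and is the ``$|b|$'' bound in the paper. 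With both in hand, Whitney gives $f\in C^{1,1}(\R^{n-1})$ with $f(x_\circ)=0$ and $\nabla f(x_\circ)=e_{x_\circ}\neq 0$, and now the implicit function theorem applies.
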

        As said above, in future work we plan to address the necessary modifications of the methods of this paper in order to enhance the $C^{1,1}$ regularity to  $C^\infty$. This will be done by obtaining higher order expansions in Theorem \ref{thm:mainintro}. In this direction, it is interesting to remark that in the present paper we are really obtaining a more precise result than Theorem \ref{thm:mainintro}. Namely, we prove 
        \begin{equation}\label{eq:expansionintrobis}
                    u(x_\circ+y)=q(y)+O(|y|^{\lambda +m })\quad \ \text{ as }|y|\downarrow 0,
        \end{equation}
        where $m\ge 1$ is an integer such that, for some $r_k\downarrow 0$ 
        \[
        \frac{u(x_\circ+r_ky)-q(r_ky)}{r_k^{\lambda + m}} \to  Q
        \]
        where $Q$ is a $(\lambda+m)$-homogeneous harmonic function which vanishes on the half-space $\{x_n=0\}\cap \{q=0\}$. Such $Q$ will give precisely the next order in the expansion, and we expect to be able to apply similar (but significantly more technical) methods as in the proof of Theorem \ref{thm:mainintro}, combined with ideas from \cite{FZ}, to $u-q-Q$ (instead of $u-q$).
            
   \subsection{A survey of previous results}\label{sec:prevresults}
       As said above, several important works investigated the fine structure of $\Lambda_{[2,\infty)}$, obtaining very interesting results which nevertheless can only be applied to ``exceptional'' subsets $\Lambda_{[2,\infty)}$. 
       In order to explain them, it is helpful to further split $\Lambda_{[2,\infty)}$ into special subsets:
       \begin{equation*}
           \Lambda_{[2,\infty)}=\Lambda_{2+2\N}\cup \Lambda_{3+ 2\N}\cup\Lambda_{ \frac 3 2 + 2\N }\cup \Lambda_{\rm other},
       \end{equation*}
       the union being disjoint (here $\N=\{0,1,2,\dots\}$). The common feature of these results was establishing estimates of the following type: if $x_\circ\in B_{1/2}$ belongs to one of these special sets and has frequency $\lambda$ then, for some $\lambda$-homogeneous solution $q_{x_\circ}$ and for some modulus of continuity $\omega_{x_\circ}$, it holds
       \begin{equation}\label{eq:expansionoverview}
           \sup_{B_r}|u(x_\circ+\cdot)-q_{x_\circ}|\leq r^\lambda \omega_{x_\circ}(r)\|u\|_{L^2(B_2)}\quad \text{ for all }r\in(0,1).
       \end{equation}
    More precisely, the current picture is:
    \begin{itemize}
        \item The set $\Lambda_{2+2\N}$ (points with even frequency) was investigated for the first time in \cite{GP}, using Monneau-type monotonicity formulas, it was showed that \eqref{eq:expansionoverview} holds at every $x_\circ\in\Lambda_{2\N}$, for a suitable $\lambda$-homogeneous harmonic polynomial $q_{x_\circ}$ and abstract modulus of continuity. 
        
        This expansion was improved in \cite{CSV1} (and further in \cite{SY2}). In both cases, using the ``epiperimetric inequality approach'' introduced by Weiss \cite{W}, the authors showed that $\omega_{x_\circ}(r)\leq C(n,\lambda)|\log r|^{-\alpha}$, for an explicit exponent $\alpha=\alpha(n,\lambda)>0$.
        
        In \cite{FeJ}, using the the ideas conceived in \cite{FS} for the obstacle problem (and which are also applicable to the analysis of $\Lambda_{2\N +2}$ in the thin obstacle problem since  $q$ is always harmonic for those points), it was proved that actually $\omega_{x_\circ}(r)=O(r)$ for all $x_\circ\in\Lambda_{2+2\N}\setminus E$, where $E$ has Hausdorff dimension at most $n-3$ (see \cite[Proposition 1.10 and Remark 1.12]{FeJ}).
        
        Points in $\Lambda_{2+2\N}$ have sometimes been called {\it singular}, in analogy with the classical obstacle problem, since the contact set $\Lambda$ has vanishing $\mathcal H^{n-1}$ density at such points (see \cite{GP}). Intuitively speaking, they are not fully genuine free boundary points as the thin obstacle is tangent to the solution and ``does not exert force'' on it.  More precisely, we expect that, outside of a $(n-3)$-dimensional set,  the tangency between the thin obstacle and the solution is at infinite order. 
        
        In a similar direction, examples exhibiting even homogeneity points can be constructed ``artificially'' in the following way: take any harmonic function $v$ which is positive in the thin space and $x'\mapsto v(x',0)$ has a local minimum $\mu\geq 0$ at $x'=0$. Then $v-\mu$ solves \eqref{eq:thinobstacle} in a small ball around this local minimum point and $$0\in\{v=\mu,x_n=0\}\subset\Lambda_{2+2\N}(v-\mu).$$
        \item $\Lambda_{3+2\N}$ consists of points with odd frequency ($\geq 3$). In all the known examples $\Lambda_{3+2\N}$ lies in the interior of $\Lambda$. The interest in such points is motivated mainly because they may arise in the 3$^{\rm{rd}}$ order blow-up analysis of solutions of the classical obstacle problem (see \cite{FS,FRS1}). In \cite[Appendix B]{FRS1} it was proved that \eqref{eq:expansionoverview} holds {at every} $x_\circ\in\Lambda_{3+2\N}$, where $q_{x_\circ}$ is (the even reflection of) a $\lambda$-homogeneous harmonic polynomial.
        
        Very recently, a striking result from \cite{SY2}, was that actually $\omega_{x_\circ}=O(r^{\lambda+\alpha})$ for some universal $\alpha>0$ at every point. This was proved again with the ``epiperimetric inequality'' approach.
        
        \item $\Lambda_{\frac32+2\N}$ consists of points with frequency in $\lambda\in\{\frac32,\frac72,\ldots\}$.         Despite their importance in the free boundary analysis, very little is known about them in general.
        
        For every such $\lambda$ it is simple to construct a solution $v$ of \eqref{eq:thinobstacle} such that $v(x',0)=(x_{n-1})_+^\lambda$, thus giving rise to a perfectly genuine free boundary: $\{x_{n-1}=x_n=0\}$. Intuitively, such points look like the degenerate version of the regular points $\Lambda_{3/2}$, but it is not clear how such degeneracy might affect the free boundary regularity.
        
        Blow-ups (i.e., $\lambda$-homogeneous solutions) are classified only in the case $n=2$ or if one prescribes their contact set to be (up to rotations) $\{x_{n-1}\leq0,x_n=0\}$. Let us call $\mathcal F$ this family of blow-ups. It is not known if other blow-ups might exist.
         
        In the case $n=2$ expansion \eqref{eq:expansionoverview} holds at every point with $\omega_{x_\circ}(r)\leq C(\lambda)r^{\lambda+\alpha}$ for some universal $\alpha>0$ and ``the'' 2D $\lambda$-homogeneous solution $q_{x_\circ}$ (see \cite[Theorem 3]{CSV1}).
        
        Savin and Yu recently proved in \cite{SY2} that, in the case $n=3$ and $\lambda=7/2$, if one already knows that some blow-up lies in $\mathcal F$, then \eqref{eq:expansionoverview} in fact holds for some $q_{x_\circ}\in\mathcal F$ and some weaker remainder\footnote{Interestingly, the rate of convergence depends on $q_{x_\circ}$. If $q_{x_\circ}$ is the 2D solution, then $\omega_{x_\circ}=O(r^\lambda|\log r|^{-\alpha})$, otherwise $\omega_{x_\circ}=O(r^{\lambda+\alpha})$. In both cases $\alpha$ is universal.}.
        
        Using that the whole free boundary is known to be $\mathcal H^{n-2}$-rectifiable (see \cite{FoS1,FoS2}), in \cite{CSV2} it is showed that, $\mathcal H^{n-2}$-a.e. in $\Lambda_{\frac32+2\N}$, \eqref{eq:expansionoverview} holds for some 2D blow-up $q_{x_\circ}$.
        
        The main new contribution of the present paper is to provide some general result about these points, up to codimension 3.
        
        \item $\Lambda_{\rm other}$ is defined as the set of points whose frequency is neither an integer nor belongs to $\{\frac32,\frac72, \frac {11}2\ldots\}$. Examples of solutions such that $\Lambda_{\rm other}\neq \emptyset$ are not known. This set (if it exists) is anyway known to be $(n-3)$-dimensional at most, as a consequence of the monotonicity of the frequency \eqref{whgiohwiohw} --- see for instance \cite{FS} (we also re-prove this fact here in \Cref{prop:dimredotherfreq}).
    \end{itemize}
    
    \subsection{Future directions and open questions} In view of the results of this paper, it seems  plausible that some open questions on related problems can be now tackled with similar methods. For instance:
    \begin{enumerate}
        \item[1.] For ``generic boundary data'' (as in \cite{FRS1}), the singular set in  the obstacle problem in $\R^n$ has dimension $n-5$.  
        \item[2.] For {\em every} solution of the obstacle problem in a ball of $\R^3$, the singular set can be covered by (countably many) $C^\infty$ curves and surfaces, after removing a 0-dimensional set.
    \end{enumerate}
     On the other hand, our results also raises natural (and seemingly difficult) questions, for example:   
    \begin{enumerate}
        \item[3.] For $\lambda\in 7/2+2\N$ is the set of ``good'' points in $\Lambda_\lambda$ where  \eqref{eq:expansionintro} holds relatively open, or there are counterexamples where it could be, say, a Cantor-like set?
    \end{enumerate}

    \subsection{Acknowledgements} Both authors have been supported by Swiss NSF Ambizione Grant PZ00P2 180042 and by the European Research Council (ERC) under the Grant Agreement No 948029.

\section{Explanation of the proofs and organization of the paper}\label{sec:overview}
    \subsection{}
    Let $u$ be a solution of \eqref{eq:thinobstacle} such that $0$ is a contact point with frequency $\lambda$, that is $0\in\Lambda_\lambda$. As explained above the most interesting and less understood situation happens when $\lambda\in\{\frac32,\frac72,\frac{11}2,\ldots\}$; let us work out this case. 
    It is known (and we will re-prove in \Cref{sec:2Dblowups}) that blow-ups $q$ at all points,  except for a $(n-3)$-dimensional set, are  translation invariant along $n-2$ directions (in addition to $\lambda$-homogeneous); in other words, up to rotation $q(x', x_{n-1},x_n)=q(x_{n-1}, x_n)$ is a ``2D solution''.  Such solutions are classified: they satisfy \[q(x', x_{n-1},0)=\tau (x_{n-1})_+^\lambda,\] for some $\tau>0$ and all $(x',x_{n-1})$ in the thin space. The core of this work consists of two {\em independent} steps
    
        \vspace{3pt}
        \noindent\textbf{- Step 1:} For all $x_\circ\in\Lambda_\lambda$ except for a $(n-3)$-dimensional set, there exist $\beta=\beta(x_\circ)>0$ and a 2D $\lambda$-homogeneous solution $q=q_{x_\circ}$, such that \[\sup_{B_1}|u(x_\circ+r\cdot)-q(r\cdot)|=O(r^{\lambda+\beta})\text{ as }r\downarrow 0.\]
        
        %\vspace{3pt}
        \noindent\textbf{- Step 2:} Assume that as some point we already know that $|(u-q)(r\cdot)|=O(r^{\lambda+\beta})$, for some $\beta>0$ and some 2D $\lambda$-homogeneous solution $q$. Then (a truncated version) of Almgren's  frequency function applied to $u-q$ is (approximately) monotone. Using this fact we classify the next possible order in the expansion, and in particular improve the error from $O(r^{\lambda+\beta})$ to $O(r^{\lambda+1})$.
        
        \vspace{3pt}
    
    The first step is proved in \Cref{sec:accelerateddecay}, and the second in \Cref{sec:frequency}. 
    It is clear that \Cref{thm:mainintro} is essentially the combination of these two steps, so let's explain briefly how they work.
    \subsection{Proof of the first step}
        This step is clearly inspired by the methods in \cite{FRS2} (however, as it is clear by comparing the two proofs, several new ideas have been needed to make the same general strategy work in the present setting). The core of this proof is \Cref{prop:dichotomy}.
        It is natural to introduce at each scale $r\in(0,2)$ the quantity
        \[
            A(r,u):=\min_{q} \|(u-q)(r\cdot)\|_{L^2(\de B_1)},
        \]
        where the minimum is taken among all 2D, $\lambda$-homogeneous solutions. We will work in the assumption that $A(1,u)\leq \eta \|u\|_{L^2(\de B_1)}$ for some small $\eta$ which we will choose in the end. Under such assumption we first prove a \textit{weak} unconditional decay of the function $A(\cdot,u)$: for any arbitrary small $\gamma>0$ we have
        \[
            A(2^{-k-1},u)\leq (1/2)^{\lambda-\gamma}A(2^{-k},u)\quad \text{ for all }k\geq0.
        \]
        This decay is \textit{weak} in the sense that the critical exponent to break would be $\lambda$, nevertheless it will be useful. Its proof is based on a Monneau-type monotonicity formula (cf. \Cref{subsec:monneau}).
        
        Next, always under the smallness of $\eta$ assumption, we establish a dichotomy at points in $\Lambda_\lambda$ at every dyadic scale $B_{2^{-k}}$. Fix any $\eps>0$, then for each $k\geq 0$ we show that
        \begin{itemize}
            \item[ \underline{either}] inside $B_{2^{-k}}$, the set $\Lambda_\lambda(u)$ lies in an $(\eps/2^k)$-tubular neighborhood of some $n-3$ dimensional linear space. With a slight abuse of language, this can be summed up in the sentence ``$\Lambda_\lambda(u)$ is $\eps$-flat at scale $2^{-k}$''.
            \item[\underline{or}] $A(\cdot,u)$ has a \textit{strong} decay from the scale $k$ to the scale $k+1$, that is
            \[
                A(2^{-k-1},u)\leq (1/2)^{\lambda+\sigma}A(2^{-k},u),
            \]
            where now $\sigma=\sigma(n,\lambda,\eps)>0$ is independent from $\eta$. Since a strong decay happens, we will call these scales ``good scales''.
        \end{itemize}
        
        Let's now fix any point $x_\circ\in\Lambda_\lambda(u)$ and run this dichotomy from scale $k=1$ to scale $k=m\gg1$: at the scales where $\Lambda(u)$ is $\eps$-flat we use the {weak} decay of $A$, while at the good scales we use the {strong} decay. We get
        \begin{equation}\label{eq:iterativedecayoverview}
            A(2^{-m},u(x_{\circ}+\cdot)\leq (1/2)^{m\lambda+N\sigma-(m-N)\gamma}\eta\|u(x_\circ+\cdot)\|_{L^2(\de B_1)},
        \end{equation}
        where $N=N(m,\eps,x_\circ)$ is the number of \textit{good} scales among $\{1,1/2,\ldots,2^{-m}\}$.
        
        Now inspecting \cref{eq:iterativedecayoverview}, at each $x_\circ$, it is natural to look at the ``asymptotic average number of good scales'', namely the number
        \begin{equation*}
            p(\eps,x_\circ):=\liminf_m\frac{N(m,\eps,x_\circ)}{m} \ \in[0,1].
        \end{equation*}
        If there exist some $\eps_\circ>0$ such that $p(\eps_\circ,x_\circ)>0$, using \eqref{eq:iterativedecayoverview} one formally gets $$\frac{A(2^{-k},u(x_\circ+\cdot))}{\eta\|u(x_\circ+\cdot)\|_{L^2(\de B_1)}}``\lesssim" (1/2)^{k(\lambda+p\sigma-(1-p)\gamma)}\leq  (1/2)^{k(\lambda+\beta)}\text{ for all }k\geq 0,$$
        where we chose $\gamma=\gamma(p)$ so that $\beta=\beta(p)$ is a positive number.
        
        Assume instead that for all $\eps>0$ we have $p(\eps,x_\circ)=0$. This means that, for all $\eps>0$, $\Lambda_\lambda(u)$ is $\eps$-flat, in average, at all scales. As intuition suggests, a GMT argument shows that points with this property have Hausdorff dimension at most $n-3$ (cf. \Cref{prop:dimensionreduction}).
        
        We remark that this method of proof simplifies greatly in the case $\lambda=3/2$, yielding a new simple proof of the regularity of $\Lambda_{3/2}$.
    \subsection{Proof of the second step}
        This step holds at each point so we assume $x_\circ=0$. We set $w_r:=(u-q)(r\cdot)$ and assume that for some $\beta,r_\circ>0$ we have
        \begin{equation}\label{eq:decayassumptionoverview}
            \sup_{B_r}|w|\leq r^{\lambda+\beta} \text { for all } r\in(0,r_\circ).
        \end{equation}
        We prove that we can improve such decay from $r^{\lambda+\beta}$ to $r^{\lambda+1}$ {\em always}, just by studying the frequency function on $w$ and the problem solved by its blow-ups. Namely, we will prove that, for any fixed large $\gamma\geq 0$ the function
        \begin{equation}\label{eq:monotnicityoverview}
            (0,r_\circ)\ni r\mapsto \frac{D(r,w)+\gamma r^{2\gamma}}{H(r,w)+r^{2\gamma}}+Cr^{\alpha\beta/\lambda},
        \end{equation}
        is increasing, where $\alpha=\alpha(n)>0$.
        This ``truncated frequency function'' was already introduced in \cite{FRS1} in the context of the obstacle problem. The striking novely here is that can prove its monotonicity for $w=u-q$, where $q$ is a 2D solution of homogeneity in $2\N+\frac 3 2$, and hence not harmonic. All the previous proof relied (both in Almgren's original works or \cite{FRS1}) used in a crucial way harmonicity of $q$, and with our approach here we see for the first time that harmonicity is not as essential as it seemed!
        
        Fix $\gamma\ge \lambda+1$ and let $\mu$ be the value of the truncated frequency function as $r\downarrow 0$. The monotonicity of the frequency is a very powerful tool, since it allows to do a second blow-up,  showing that for some sequence $r_\ell\downarrow 0$ we have
        \begin{equation*}
            \lim_k \frac{w_{r_\ell}}{\|w_{r_\ell}\|_{L^2(\de B_1)}}=Q,
        \end{equation*}
        in the $L^2_\loc(\R^n)$ topology, where $Q$ is $\mu$-homogeneous. By studying the (linear) problem satisfied by $Q$ we can show that  $\mu\in \N+\frac 1 2 \subset \mathbb Z +\lambda $. But then, using  \eqref{eq:decayassumptionoverview} we obtain $\mu\geq \lambda+\beta$, so it must be $\mu\geq \lambda+1$, which was essentially our goal.
        
        The (completely new) proof of the monotonicity \eqref{eq:monotnicityoverview} relies on several ingredients, an important one is the following ``nonlinear'' estimate (cf. \Cref{lem:estimatewlapw,lem:keyestimateformonotonicity})
        \begin{align}\label{eq:wrlapwroverview}
            \int_{B_1}|w_r\lap w_r|\leq Cr^{\alpha\beta/\lambda}\|w_r\|^2_{L^2(B_2\setminus B_1)}.
        \end{align}
        The proof of this estimate turns out to be quite short (but nontrivial). The main difficulty is that points of half-integral frequency display a two-fold behaviour: in the half space where $q>0$, they behave like points with even frequency, while in the other half space, where $q\equiv 0$, they behave like points with odd frequency. For points of even frequency the Almgren's frequency was known to be exactly monotone, but for points of odd frequency, even after some neat simplifications, some errors appear, thanks to estimate \eqref{eq:wrlapwroverview} we are able to keep such terms under control.
        \subsection{Organization of the paper} In \Cref{sec:preliminaries} we collect some some preliminary estimates which we will use throughout the paper, \Cref{subsec:elliptic,subsec:estimatesdifference} will be particularly important. In \Cref{sec:2Dblowups} we carry out a preliminary dimension reduction to show that blow-ups are 2D (such result is well-known in its ``qualitative'' version, but since we need a ``quantitative'' version we re-prove it for the reader's convenience). In \Cref{sec:accelerateddecay} we prove Step 1. In \Cref{sec:frequency} we prove Step 2. In \Cref{sec:proofmainresults} we finally give the proof of \Cref{thm:mainintro} and \Cref{cor:covering}.
\section{Preliminaries}\label{sec:preliminaries}
    \subsection{Notation}\label{subsec:notations}
        We work in $\R^n$ where $n\geq 2$. When $f$ is a function defined on $\R^n$ and $r\in\R$, we set $f_r(x):=f(rx)$. We remark that $\nabla u_r= r (\nabla u)_r$. We denote by $O(n-1)$ the group of $n\times n$ orthogonal matrices which fix $e_n$. We will say that a function is increasing also in the case in which it is just nondecreasing. $\mathcal C_\delta:=\{x_{n-1}^2 + x_n^2\leq \delta^2\}$ and $\mathcal N_\delta:=\{(x_{n-1})_+^2 + x_n^2 \leq \delta^2\}$. $\mathcal{X}_E: \R^n\to \{0,1\}$ denotes indicator function of the set $E\subset \R^n$.
   
   \subsection{A useful lemma} 
        As a standard consequence (see for instance the proof of Lemma 2.6 in \cite{FS}) of the monotonicity of the frequency function we have the following:
        \begin{lemma}\label{lem:Hlambdau}
            If $u$ solves \eqref{eq:thinobstacle} and $0\in\Lambda_\lambda(u)$ for some $\lambda\geq 0$, then
            \begin{equation*}
                \left(\frac R r \right)^{\lambda} \leq \frac{H_0(R,u)}{H_0(r,u)}\leq \left(\frac R r \right)^{\lambda+\phi(R,u)}\text{ for all }0<r<R<2.
            \end{equation*}
        \end{lemma}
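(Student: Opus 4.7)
The plan is the classical first consequence of Almgren's monotonicity: derive an ODE for $\log H_0(r,u)$ driven by $\phi(r,u)$, and then integrate it using \eqref{whgiohwiohw} together with the identification $\phi(0+,u)=\lambda$ at points of $\Lambda_\lambda(u)$.

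First, I would change variables to write $H_0(r,u)=r^{1-n}\int_{\partial B_r}u^2\,d\mathcal H^{n-1}$ and differentiate in $r$ to obtain
\[
\tfrac{d}{dr}H_0(r,u)\;=\;2\,r^{1-n}\!\int_{\partial B_r}u\,u_\nu.
\]
The key algebraic input is the Rellich-type identity $\int_{\partial B_r}u\,u_\nu=\int_{B_r}|\nabla u|^2$. This is precisely the point where the thin-obstacle structure enters: the distribution $\lap u$ is a nonpositive measure supported on the contact set $\Lambda\subset\{u=0\}$, hence the complementarity $u\,\lap u\equiv 0$ holds and the bulk term in the integration by parts vanishes. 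Combining with the definition of $\phi$ gives
\[
\frac{d}{dr}\log H_0(r,u)\;=\;\frac{2\,\phi(r,u)}{r}.
\]

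Next, \eqref{whgiohwiohw} tells me that $\rho\mapsto\phi(\rho,u)$ is nondecreasing on $(0,2)$, and by definition of $\Lambda_\lambda(u)$ its limit as $\rho\downarrow 0$ equals $\lambda$. Hence $\lambda\le\phi(\rho,u)\le\phi(R,u)$ for every $\rho\in(0,R]$. Plugging these two bounds into the ODE, integrating between $r$ and $R$, and exponentiating immediately yields the claimed two-sided estimate on $H_0(R,u)/H_0(r,u)$.

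There is no genuinely delicate step here; the only place where the problem-specific structure shows up is the Rellich identity, which rests on the complementarity $u\,\lap u\equiv 0$ guaranteed by \eqref{eq:thinobstacle}. The rest is elementary calculus combined with the (already-established) monotonicity of $\phi$, so the full proof should occupy just a few lines.
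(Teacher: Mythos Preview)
Your argument is correct and is exactly the standard one the paper has in mind: the authors do not give a proof but simply cite \cite[Lemma~2.6]{FS}, whose proof is precisely the computation you outline (the identity $\tfrac{d}{dr}\log H_0(r,u)=\tfrac{2\phi(r,u)}{r}$, followed by integration using $\lambda\le\phi(\rho,u)\le\phi(R,u)$).

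One small remark: carrying out the integration actually produces the exponents $2\lambda$ and $2\phi(R,u)$, namely
\[
\Big(\tfrac{R}{r}\Big)^{2\lambda}\le \frac{H_0(R,u)}{H_0(r,u)}\le \Big(\tfrac{R}{r}\Big)^{2\phi(R,u)},
\]
which is the form appearing in \cite{FS}. The exponents $\lambda$ and $\lambda+\phi(R,u)$ written in the lemma are presumably typos (the lower bound you obtain is stronger than the stated one, while the stated upper bound is stronger than what your computation gives). This does not affect the validity of your argument, and everywhere the lemma is invoked in the paper either version suffices.
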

    \subsection{Homogeneous solutions in 2D}    
        Homogeneous solutions of \eqref{eq:thinobstacle} are classified in 2D. We start defining the following $\lambda$-homogeneous functions
        \begin{equation}\label{eq:psilambdadef}
            \psi_\lambda(x,y):= a
            \begin{cases}
                \text{Re}(\sqrt{x+i|y|}^{2\lambda}) & \text{ if }\lambda\in \frac{1}{2}+\N,\\
                -\text{Im}({x+i|y|})^{\lambda} & \text{ if }\lambda\in 1+2\N,\\
                \text{Re}({x+i|y|})^{\lambda} & \text{ if }\lambda\in 2\N,
            \end{cases} 
        \end{equation}
        where $\sqrt{\cdot}\colon\mathbb C\setminus\{y<0,x=0\}\to\R$ is the branch of the square root such that $\sqrt{1}=1$. The constant $a=a(n,\lambda)>0$ is chosen in such a way that $\|\psi_\lambda\|_{L^2(\de B_1)}=1$. It is easy to check that $\psi_\lambda$ is continuous and non-negative on $\{y=0\}$ for all $\lambda$'s for which we defined it. More precisely, on the thin space we have
        \begin{equation}\label{eq:signpsilambda}
            \psi_\lambda(x,0)= 
            a\begin{cases}
                (x)_+^\lambda & \text{ if }\lambda\in \frac{1}{2}+\N,%\\
                %(x)_+^\lambda & \text{ if }\lambda\in \frac{3}{2}+2\N,
                \\
                0 & \text{ if } \lambda\in 1+2\N,\\
                x^\lambda & \text{ if }\lambda\in 2\N.
            \end{cases}    
        \end{equation}
        The distributional laplacian of $\psi_\lambda$ is given by
        \begin{equation*}
            \lap\psi_\lambda = 2\frac{\de \psi_\lambda}{\de y}(\cdot,0+)\ \mathcal{H}^{n-1}\res{\{y=0\}},
        \end{equation*}
        more explicitly, for some constant $
        c=c(n,\lambda)>0$ we have
        \begin{equation}\label{eq:laplacianpsilambda}
            \lap\psi_\lambda(x,y)=-c
            \begin{cases}
             -(x)_-^{\lambda-1}\delta_0(y) & \text{ if }\lambda\in \frac{1}{2}+2\N,\\
             (x)_-^{\lambda-1}\delta_0(y) & \text{ if }\lambda\in \frac{3}{2}+2\N,\\
                x^{\lambda-1}\delta_0(y) & \text{ if }\lambda\in 1+2\N,\\
                0 & \text{ if }\lambda\in 2\N.
            \end{cases}
        \end{equation}
        We point out that, for $\lambda\in\frac32+\N$, we have
        \begin{equation*}
            \frac{\de\psi_\lambda}{\de x}=b\psi_{\lambda-1},
        \end{equation*}
        for some $b=b(n,\lambda)>0$.
        \begin{lemma}\label{lem:2Dsolutionsclassification}
            Suppose $u$ solves \eqref{eq:thinobstacle} in $\R^2$ and $x\cdot \nabla u =\lambda u$ for some $\lambda\geq 0$. Then $\lambda\in\{1,2,3,\ldots\}\cup \{\frac{3}{2},\frac{7}{2}, \frac{11}{2}\ldots\}$ and $u=\tau\psi_\lambda$ for some $\tau\geq 0$.
        \end{lemma}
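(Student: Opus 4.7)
The plan is to work in polar coordinates $(r,\theta)$ with $x=r\cos\theta$, $y=r\sin\theta$. The homogeneity $x\cdot\nabla u=\lambda u$ gives $u(r,\theta)=r^{\lambda}g(\theta)$ for a single angular profile $g$, and the even symmetry $u(x,y)=u(x,-y)$ makes $g$ symmetric about $\theta=0$, so it suffices to determine $g$ on the upper arc $\theta\in[0,\pi]$. Since $u$ is harmonic on $\{y\neq 0\}$, expanding $\lap u$ in polar coordinates reduces the problem to the Euler-type ODE $g''+\lambda^{2}g=0$ on $(0,\pi)$, whose general solution is $g(\theta)=A\cos(\lambda\theta)+B\sin(\lambda\theta)$.

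Next I would translate the thin obstacle conditions at $\{y=0\}$ into pointwise boundary data for $g$ at the two endpoints $\theta=0$ and $\theta=\pi$. The even symmetry induces a jump of $u_{y}$ across the thin space, so distributional $\lap u$ concentrates there as the measure $2u_{y}(\cdot,0^{+})\mathcal{H}^{1}\res\{y=0\}$; combined with $\lap u\leq 0$, $u\geq 0$ on $\{y=0\}$ and $\lap u=0$ where $u>0$, this yields the Signorini-type conditions
\[
g(0),\,g(\pi)\geq 0,\qquad g'(0)\leq 0,\ g'(\pi)\geq 0,\qquad g(0)g'(0)=g(\pi)g'(\pi)=0.
\]
The opposite signs in the middle come from the fact that the outward normal to $\{y>0\}$ points in opposite $y$-directions at $\theta=0$ and at $\theta=\pi$.

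I would then perform a short case analysis on whether each endpoint is contact ($g=0$) or non-contact ($g>0$). \emph{Both non-contact}: $g'(0)=g'(\pi)=0$ forces $B=0$ and $\sin(\lambda\pi)=0$, so $\lambda\in\N$; positivity of $g(\pi)=A\cos(\lambda\pi)$ with $A>0$ then forces $\lambda\in 2+2\N$, and $u$ is a positive multiple of $\psi_{\lambda}$. \emph{Exactly one contact endpoint}: after the reflection $x\mapsto -x$ (a symmetry of the thin space) I may place the contact at $\theta=\pi$; then $g'(0)=0$ gives $B=0$, $g(\pi)=A\cos(\lambda\pi)=0$ gives $\lambda\in\tfrac12+\N$, and $g'(\pi)=-A\lambda\sin(\lambda\pi)\geq 0$ with $A>0$ singles out $\lambda\in\tfrac32+2\N$, again matching $\psi_{\lambda}$. \emph{Both contact}: $A=0$ and $\sin(\lambda\pi)=0$ give $\lambda\in\N$, while $g'(0)=B\lambda\leq 0$ and $g'(\pi)=B\lambda\cos(\lambda\pi)\geq 0$ force $B\leq 0$ and $\lambda$ odd, i.e.\ $\lambda\in 1+2\N$, and $u$ is a nonnegative multiple of $\psi_{\lambda}$ on the odd integers.

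The only real subtlety is the clean derivation of the Signorini boundary conditions for $g$ from the distributional inequality $\lap u\leq 0$: one has to keep track of the opposite $y$-directions of the outward normal to $\{y>0\}$ at the two endpoints (which is what produces the asymmetry $g'(0)\leq 0$ vs.\ $g'(\pi)\geq 0$) and recognize that the thin-space contribution to $\lap u$ is exactly $2u_{y}(\cdot,0^{+})\delta_{\{y=0\}}$. Once this dictionary is in place the rest is elementary trigonometry.
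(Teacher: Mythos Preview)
Your proposal is correct and follows precisely the approach the paper indicates in its one-line proof (``One can explicitly solve the ODE for the function $\theta\mapsto u(\cos\theta,\sin\theta)$''): you write $u=r^{\lambda}g(\theta)$, solve $g''+\lambda^{2}g=0$, translate the Signorini conditions into endpoint constraints on $g$, and run the case analysis. You have simply supplied the details the paper omits; the only cosmetic point is that your ``WLOG by reflection $x\mapsto -x$'' in the one-contact case is exactly the $O(n-1)=O(1)$ freedom the paper uses elsewhere when writing $\psi_\lambda\circ S$.
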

        \begin{proof}
            One can explicitly solve the ODE for the function $\theta\mapsto u(\cos\theta,\sin\theta)$. 
        \end{proof}
    \subsection{The ``linearized problem''}
        We introduce the following ``linearized problem'' which will be satisfied (approximately) by $w_r(x): = (u-q)(rx)$, whenever $q$ is a 2D blow-up of homogeneity $\lambda\in \frac 3 2 + 2\N$.
        
        For $R>0$ let us consider
        \begin{equation}\label{eq:thinobstaclelinearized}
            \begin{cases}
                    \lap w = 0 & \text{ in }B_R\setminus\mathcal N_0,\\
                    w = 0 & \text{ in } B_R\cap\mathcal N_0,\\
                    w(x',x_n)=w(x',-x_n) & \text{ for all }(x',x_n)\in B_R,
            \end{cases}
        \end{equation}
        (recall that $\mathcal N_\delta=\{(x_{n-1})_+^2 + x_n^2 \leq \delta^2\}$).
        We also define for each $\lambda \geq 0$ the \textit{linear} space of homogeneous solutions of \eqref{eq:thinobstaclelinearized}.
        \begin{equation}\label{eq:defHcallambda}
            \mathcal H_\lambda:=\left\{v\in H^1_\loc(\R^n) : v=0\text{ on }\mathcal N_0, \lap v = 0\text{ in }\R^n\setminus \mathcal N_0, x\cdot \nabla v=\lambda v\right\},
        \end{equation}
        it is easily checked that for each $\lambda\in\frac1 2 +\N$ we have $\psi_\lambda\in\mathcal H_\lambda$. Moreover, standard separation of variables (radial, spherical) and spectral theory give
        \begin{proposition}\label{prop:sphericalharmonics}
            The following facts hold:
            \begin{enumerate}
                \item $\mathcal H_\lambda$ is non-trivial only if and only $\lambda\in\frac 1 2 + \N$, in which case $\mathcal{H}_\lambda$ has finite dimension $d_\lambda$ (and solutions are explicit\footnote{It is well-known that the homogeneity $\lambda= \frac 1 2$ corresponds to the first eigenvalue of the problem on the sphere. Hence, $\mathcal H_\lambda$ is one dimensional (and generated by $\psi_{1/2}$) . It is then possible  to compute explicitly the solutions spanning $\mathcal H_\lambda$ for $\lambda\ge \frac 3 2$ by exploiting  the following fact: if $v\in \mathcal H_\lambda$ and $\lambda\ge \frac 3 2$,  then $\partial_j v\in \mathcal H_{\lambda-1}$ for all $j\in\{1,2,\dots, n-2\}$.}). 
                \item Each  and $\mathcal{H}_\lambda\perp\mathcal H_\lambda'$ in the $L^2(\de B_1)$ scalar product, provided $\lambda\neq \lambda'$.
                \item If $w\in H^1(B_1)$ solves \eqref{eq:thinobstaclelinearized} for $R=1$ then following expansion in spherical harmonics holds in the $H^1(B_1)$ topology
                \begin{equation*}
                    w=\sum_{\lambda\in \N+1/2}\sum_{m=1}^{d_\lambda} c_{\lambda,m}\psi_{\lambda,m},
                \end{equation*}
                where $\{\psi_{\lambda,1},\ldots,\psi_{\lambda,d_\lambda}\}$ is an orthonormal basis of $\mathcal H_\lambda\subset L^2(\de B_1)$ and 
                \begin{equation*}
                    c_{\lambda,m}:=\fint_{\de B_1}w\psi_{\lambda,m}.
                \end{equation*}
                In particular, if $\sup_{0<r<1/4}r^{-\sigma}\|w(r\cdot)\|_{L^2(B_1)}$ is finite for some $\sigma>0$, then $c_{\lambda,m}=0$ for all $\lambda<\sigma$.
            \end{enumerate}
        \end{proposition}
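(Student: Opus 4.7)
The plan is to combine an explicit construction of $\lambda$-homogeneous solutions with standard spectral theory, reducing the PDE to a family of 2D slit problems solved by the $\psi_\mu$'s. Writing $x=(x',x_{n-1},x_n)$ with $x'\in\R^{n-2}$, problem \eqref{eq:thinobstaclelinearized} is invariant under rotations and translations in $x'$, so separation of variables in the directions tangent to $\mathcal N_0$ is natural. For any $\mu\in\tfrac12+\N$ and any harmonic homogeneous polynomial $P$ of degree $k$ on $\R^{n-2}$, set
\[
v_{P,\mu}(x):=P(x')\,\psi_\mu(x_{n-1},x_n).
\]
This lies in $\mathcal H_{k+\mu}$: it is even in $x_n$ and vanishes on $\mathcal N_0$ since $\psi_\mu$ does; using $\lap=\lap_{x'}+\lap_{(x_{n-1},x_n)}$, $\lap_{x'}P=0$, and the harmonicity of $\psi_\mu$ off the 2D slit from \Cref{lem:2Dsolutionsclassification}, we have $\lap v_{P,\mu}=0$ off $\mathcal N_0$. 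The $v_{P,\mu}$ are linearly independent (distinct bidegrees in $x'$ versus $(x_{n-1},x_n)$) and, as $k$ and $P$ vary, furnish the explicit basis of $\mathcal H_{k+\mu}$ promised in the footnote; in particular $\mathcal H_\lambda\neq\{0\}$ whenever $\lambda\in\tfrac12+\N$.

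For orthogonality (2), apply Green's identity on $B_1\cap\{x_n>0\}$ to $v\in\mathcal H_\lambda$ and $v'\in\mathcal H_{\lambda'}$: the flat boundary contributes nothing, since $v=v'=0$ on $\mathcal N_0$ while $\de_{x_n}v=\de_{x_n}v'=0$ on the complementary Neumann part of $\{x_n=0\}\cap B_1$ by even symmetry. Thus
\[
0=\int_{B_1\cap\{x_n>0\}}(v\lap v'-v'\lap v)=\int_{\de B_1\cap\{x_n>0\}}(v\de_\nu v'-v'\de_\nu v)=\tfrac{\lambda'-\lambda}{2}\int_{\de B_1}vv',
\]
using Euler's relation $\de_\nu v=\lambda v$ on $\de B_1$. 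To exclude $\mathcal H_\lambda\neq\{0\}$ for $\lambda\notin\tfrac12+\N$, expand an arbitrary $v\in\mathcal H_\lambda$ in $S^{n-3}$-spherical harmonics in the $x'$-variable: each component has the form $P_k(x')\tilde v_k(x_{n-1},x_n)$ with $P_k$ a harmonic polynomial of degree $k$, and the residual 2D function $\tilde v_k$ must be a $(\lambda-k)$-homogeneous solution of the 2D slit problem. By \Cref{lem:2Dsolutionsclassification} this forces $\lambda-k\in\tfrac12+\N$, whence $\lambda\in\tfrac12+\N$ and $v\in\Span\{v_{P_k,\lambda-k}\}$, completing (1) and confirming that $d_\lambda<\infty$.

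The expansion (3) is now immediate from standard spectral theory on the slit sphere $S^{n-1}\setminus\Gamma$ (with $\Gamma:=\mathcal N_0\cap S^{n-1}$): the Dirichlet Laplace--Beltrami operator there has compact resolvent, so its eigenfunctions $\{\psi_{\lambda,m}\}$ form an $L^2(S^{n-1})$-orthonormal basis. Any $w\in H^1(B_1)$ solving \eqref{eq:thinobstaclelinearized} is determined by its trace on $\de B_1$ and reconstructed mode-by-mode by $\lambda$-homogeneous harmonic extension; the coefficient formula $c_{\lambda,m}=\fint_{\de B_1}w\psi_{\lambda,m}$ follows from orthogonality, and $H^1(B_1)$-convergence of the sum from the Dirichlet energy identity. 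The final decay statement is immediate from $\|w(r\cdot)\|_{L^2(B_1)}^2=\sum c_{\lambda,m}^2 r^{2\lambda}\|\psi_{\lambda,m}\|_{L^2(B_1)}^2$: if $r^{-\sigma}\|w(r\cdot)\|_{L^2(B_1)}$ stays bounded as $r\downarrow 0$, each $c_{\lambda,m}$ with $\lambda<\sigma$ must vanish. The main technical point is the $S^{n-3}$-spherical-harmonic expansion used to close (1): justifying its convergence near the rim $\de\Gamma$, where eigenfunctions have only H\"older regularity, needs mild care with $H^1$ traces and mollification, but is otherwise a routine separation of variables.
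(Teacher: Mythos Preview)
The paper does not give a proof of this proposition, only the one-line remark that it follows from ``standard separation of variables (radial, spherical) and spectral theory,'' together with the footnote suggesting the inductive construction via tangential derivatives $\partial_j:\mathcal H_\lambda\to\mathcal H_{\lambda-1}$. Your treatment of (2) and (3) is correct and standard. The real issue lies in your handling of (1).

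Your claimed basis $\{v_{P,\mu}=P(x')\psi_\mu(x_{n-1},x_n):P\text{ harmonic in }x'\}$ does \emph{not} span $\mathcal H_\lambda$. For a concrete counterexample take $n=4$, $\lambda=5/2$, and
\[
F_0(x)\;=\;\Big(x_1^2+x_2^2-\tfrac{2}{3}(x_3^2+x_4^2)\Big)\,\psi_{1/2}(x_3,x_4).
\]
One checks directly (using $\Delta_{x'}(|x'|^2)=4$, Euler's relation $(x_3\partial_3+x_4\partial_4)\psi_{1/2}=\tfrac12\psi_{1/2}$, and $\Delta\psi_{1/2}=0$ off the slit) that $\Delta F_0=0$, that $F_0$ vanishes on $\mathcal N_0$, is even in $x_4$, and is $5/2$-homogeneous; so $F_0\in\mathcal H_{5/2}$. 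But $F_0$ is not a linear combination of products $P(x')\psi_\mu(x_3,x_4)$ with $P$ harmonic in $x'$, since its $x'$-part $|x'|^2$ is not harmonic and the compensating term involves $(x_3^2+x_4^2)\psi_{1/2}$, which is not a 2D slit eigenfunction. Consequently your ``$S^{n-3}$-spherical-harmonic'' reduction cannot yield components of the form $P_k(x')\tilde v_k(x_{n-1},x_n)$: projecting onto $Y_k(x'/|x'|)$ produces a coefficient $g_k(|x'|,x_{n-1},x_n)$ that genuinely depends on $|x'|$ and solves a three-variable equation with a $-k(k+n-4)/|x'|^2$ potential, not the 2D slit problem. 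The difficulty you flag at the end (convergence near $\partial\Gamma$) is not the obstruction; the separation itself is false as stated.

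A second, smaller issue: you invoke \Cref{lem:2Dsolutionsclassification} to conclude $\lambda-k\in\tfrac12+\N$, but that lemma classifies $\lambda$-homogeneous solutions of the \emph{nonlinear} thin obstacle problem \eqref{eq:thinobstacle} (giving $\lambda\in\N\cup(\tfrac32+2\N)$), not of the 2D linear slit problem. The linear 2D statement you need (homogeneities are exactly $\tfrac12+\N$) is equally elementary via the ODE $h''+\lambda^2 h=0$ on $(-\pi,\pi)$ with $h(\pm\pi)=0$ and $h$ even, but it is not \Cref{lem:2Dsolutionsclassification}.

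The fix is precisely what the footnote suggests: argue by induction on $\lambda$ using $\partial_j v\in\mathcal H_{\lambda-1}$ for $j\le n-2$. Since $\mathcal H_\mu=\{0\}$ for $\mu<1/2$ (first eigenvalue) one first deduces that any $v\in\mathcal H_\lambda$ is a \emph{polynomial} in $x'$ of degree at most $\lfloor\lambda-\tfrac12\rfloor$. Then, assuming the description of $\mathcal H_{\lambda-1}$, integrate the tangential derivatives to recover $v$ up to a function of $(x_{n-1},x_n)$ alone; the latter is a $\lambda$-homogeneous 2D slit solution, forcing $\lambda\in\tfrac12+\N$ (or $v\equiv0$). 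This simultaneously gives the explicit finite-dimensional description and the ``only if'' direction, and it correctly produces the mixed solutions like $F_0$ that your product ansatz misses.
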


    \subsection{Elliptic estimates}\label{subsec:elliptic}
        We collect some local regularity properties of functions $f$ satisfying the ellipticity condition $f\lap f\geq 0$. On the one hand, if $w$ solves \eqref{eq:thinobstaclelinearized} then $w\lap w\equiv 0$. On the other hand we have the following. 
        \begin{remark}\label{rem:wlapw}
        If $u$ and $u'$ both solve \eqref{eq:thinobstacle}, then
        \[(u-u')\lap(u-u')\geq0.\]
        Indeed, $(u-u')\lap(u-u') = -u\Delta u' - u'\Delta u$. This is nonnegative since both $u$ and $u'$ are nonnegative on $\{x_n=0\}$, while $\Delta u$ and $\Delta u'$  are nonpositive and supported on $\{x_n=0\}$.
        \end{remark}
        \begin{lemma}\label{lem:ellipticestimates}
            Suppose $w\colon B_2\to\R$ satisfies $w\lap w\geq 0$. Then
            \begin{equation*}
                \sup_{B_{3/2}}|w|+\|\nabla w\|_{L^2(B_{3/2})}\leq C \|w\|_{L^2(B_2\setminus B_1)},
            \end{equation*}
            where $C$ depends only on $n$. Moreover, $r \mapsto H_0(r,w)$ is nondecreasing.
        \end{lemma}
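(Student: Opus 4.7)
The heart of the argument is the observation that the hypothesis $w\Delta w\geq 0$ forces $w^2$ to be subharmonic: at least formally
\[
\Delta (w^2) = 2|\nabla w|^2 + 2 w\Delta w \geq 2|\nabla w|^2 \geq 0,
\]
and in the relevant application (cf.\ \Cref{rem:wlapw}) $\Delta w$ is a signed measure supported on $\{x_n=0\}$ whose product with the boundary trace of $w$ is well defined and has a definite sign, so the identity holds in the sense of distributions. Once we know $w^2$ is subharmonic, everything else follows from classical tools.

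The plan is to first extract from subharmonicity the monotonicity of the spherical means $H_0(r,w)=r^{1-n}\int_{\partial B_r} w^2$. This is the standard computation: writing $H_0$ as an average on the unit sphere and differentiating gives
\[
\frac{d}{dr} H_0(r,w) = \frac{2}{r^{n-1}}\int_{\partial B_r} w\, w_\nu = \frac{2}{r^{n-1}}\int_{B_r}\bigl(|\nabla w|^2+w\Delta w\bigr)\geq 0,
\]
so $r\mapsto H_0(r,w)$ is nondecreasing, which is the second assertion of the lemma.

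Next I would bootstrap this monotonicity into an $L^2$ control of $w$ on an interior ball by the annular norm. Writing $\|w\|_{L^2(B_\rho)}^2=\int_0^\rho s^{n-1} H_0(s,w)\,ds$ and using monotonicity twice, once on $[0,7/4]$ (upper bound) and once on $[7/4,2]$ (lower bound), yields
\[
\|w\|_{L^2(B_{7/4})}^2 \leq C\, H_0(7/4,w) \leq C\, \|w\|_{L^2(B_2\setminus B_{7/4})}^2 \leq C\, \|w\|_{L^2(B_2\setminus B_1)}^2.
\]
Combined with the pointwise mean-value inequality for the nonnegative subharmonic function $w^2$ on balls of radius $1/4$ centered at points of $B_{3/2}$, this gives $\sup_{B_{3/2}} |w|^2 \leq C\|w\|^2_{L^2(B_{7/4})}$, hence the claimed sup bound.

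Finally, for the gradient estimate I would use the Caccioppoli-type trick: pick a cutoff $\eta\in C_c^\infty(B_{7/4})$ with $\eta\equiv 1$ on $B_{3/2}$ and integrate by parts
\[
\int |\nabla w|^2 \eta^2 = -\int w\eta^2 \Delta w - 2\int w\eta\, \nabla w\cdot\nabla\eta \leq -2\int w\eta\, \nabla w\cdot\nabla\eta,
\]
where the first term is discarded using $w\Delta w\geq 0$. Absorbing $\tfrac12\int|\nabla w|^2\eta^2$ via Young's inequality leaves $\int_{B_{3/2}}|\nabla w|^2 \leq C\int_{B_{7/4}} w^2$, which is already controlled by the previous step. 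The only genuinely delicate point in the whole proof is justifying the identity $\Delta(w^2)=2|\nabla w|^2+2w\Delta w$ and the integration by parts when $\Delta w$ is a measure; but in the setting the lemma is applied, the measure is supported on $\{x_n=0\}$ and its product with the (Hölder continuous) trace of $w$ is unambiguously defined and nonnegative, so these manipulations are legitimate.
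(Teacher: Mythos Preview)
Your proof is correct and uses essentially the same ingredients as the paper: subharmonicity of $w^2$, the mean value inequality for the $L^\infty$ bound and the monotonicity of $H_0$, and a cutoff integration by parts for the gradient estimate. The only difference is organizational: the paper chooses the cutoff $\xi$ with $\mathcal X_{B_{3/2}}\le\xi\le\mathcal X_{B_2}$ and writes $\int_{B_{3/2}}|\nabla w|^2\le\tfrac12\int\Delta(w^2)\,\xi=\tfrac12\int w^2\,\Delta\xi$, so that $\Delta\xi$ is supported in $B_2\setminus B_{3/2}$ and the annular bound falls out directly, avoiding your intermediate step of controlling $\|w\|_{L^2(B_{7/4})}$ by the annular norm.
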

        \begin{proof}
            Fix a smooth cut-off function $\xi$ satisfying $\mathcal X_{B_{3/2}}\leq \xi\leq \mathcal X_{B_{2}}$. 
            Since $w\lap w\geq 0$, 
            we have 
            \[
            \int_{B_{3/2}} |\nabla w|^2 \le \int_{B_{2}} |\nabla w|^2\xi \le  \frac 12 \int_{B_{2}} \Delta (w^2)\xi
            = \frac 12 \int_{B_{2}} w^2 \Delta\xi  \le C \int_{B_2\setminus B_1} w^2.
            \]
            
            The $L^\infty$ bound and the fact that $H_0$ is nondecreasing follow easily using that $w^2$ is subharmonic and the mean value property.
        \end{proof}
        \begin{remark}
            Let $e\in\{x_n=0\}$ and $u$ a solution of \eqref{eq:thinobstacle}. Applying \Cref{lem:ellipticestimates} to $w:=u(te+\cdot)-u$ and sending $t\downarrow 0$ one gets the following ``tangential'' $H^2$ estimate
            \begin{equation}\label{eq:H2tangentialestimates}
                \|\nabla \de_e u\|_{L^2(B_1)}\leq C\|u\|_{L^2(B_2\setminus B_1)},
            \end{equation}
            where $C$ depends on $n$.
        \end{remark}
        As an application of this remark, we can compare the frequency at different points in the contact set.
        \begin{lemma}\label{lem:frequencycomparison}
            Let $u$ solve \eqref{eq:thinobstacle} with $0$ and $x_\circ$ in $\Lambda(u)\cap B_{2r}$ for some $r\in(0,1/2)$. Then we have
            \begin{equation*}
                \left|\frac{\phi(r,u)}{\phi(r,u(x_\circ +\cdot))}-1\right|\leq C^{\phi(4r,u)}\frac{|x_\circ|}{r},
            \end{equation*}
            where $C$ depends on only on $n$.
        \end{lemma}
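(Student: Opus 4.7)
The plan is to rescale so that $r=1$ (using scale-invariance of $\phi$), introduce $v(y):=u(x_\circ+y)$ and $w:=v-u$, and separately control the relative perturbations $(H(1,v)-H(1,u))/H(1,u)$ and $(D(1,v)-D(1,u))/D(1,u)$ by $C^{\phi(4,u)}|x_\circ|$. Once both are established, a short algebraic manipulation of the identity $\phi(1,u)/\phi(1,v)=(D(1,u)/D(1,v))\cdot(H(1,v)/H(1,u))$, together with the harmless reduction $C^{\phi(4,u)}|x_\circ|\leq 1/2$ (otherwise the conclusion is trivial), yields the desired ratio bound.

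The key step is a bound on $w$. Since $u$ and $v$ both solve \eqref{eq:thinobstacle} and $x_\circ$ lies in the thin space, \Cref{rem:wlapw} gives $w\lap w\geq 0$, so \Cref{lem:ellipticestimates} applied to $w$ yields $\|w\|_{L^\infty(B_{3/2})}+\|\nabla w\|_{L^2(B_{3/2})}\leq C\|w\|_{L^2(B_2\setminus B_1)}$. The fundamental theorem of calculus applied along the segment $tx_\circ$, together with Fubini, gives $\|w\|_{L^2(B_2\setminus B_1)}\leq |x_\circ|\,\|\nabla u\|_{L^2(B_4)}$. To translate the right-hand side into $H(1,u)$, I use the identity $\int_{B_4}|\nabla u|^2=4^{n-2}\phi(4,u)H(4,u)$ and the upper bound of \Cref{lem:Hlambdau}, namely $H(4,u)\leq 4^{\lambda+\phi(4,u)}H(1,u)\leq 4^{2\phi(4,u)}H(1,u)$ (since $\lambda=\phi(0^+,u)\leq \phi(4,u)$), to conclude
\[
\|w\|_{L^2(\partial B_1)}+\|\nabla w\|_{L^2(B_1)}\leq C^{\phi(4,u)}|x_\circ|\sqrt{H(1,u)}.
\]

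With this in hand, writing $H(1,v)-H(1,u)=\int_{\partial B_1}w(v+u)$ and $D(1,v)-D(1,u)=\int_{B_1}\nabla w\cdot\nabla(v+u)$, Cauchy--Schwarz combined with the previous estimate and the (trivial, coming from the triangle inequality) bound $\|v+u\|_{L^2(\partial B_1)}+\|\nabla(v+u)\|_{L^2(B_1)}\leq C^{\phi(4,u)}\sqrt{H(1,u)}$ controls both $|\Delta H|$ and $|\Delta D|$ by $C^{\phi(4,u)}|x_\circ|\,H(1,u)$. Dividing by $H(1,u)$ and by $D(1,u)\geq H(1,u)$ respectively (the latter using the standard fact that the minimum frequency at a contact point is $1$, hence $\phi(1,u)\geq 1$) yields the two relative error bounds, and hence the claim. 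The main technical point is producing the \emph{exponential} factor $C^{\phi(4,u)}$ rather than a polynomial one; this is forced by the exponential growth in $\phi$ present in the upper bound of \Cref{lem:Hlambdau}, and by the need to pass from $L^2$ norms of $\nabla u$ on $B_4$ back to $L^2(\partial B_1)$ norms of $u$.
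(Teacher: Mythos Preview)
Your proof is correct and follows essentially the same strategy as the paper's: rescale to $r=1$, set $v=u(x_\circ+\cdot)$, bound $|H(1,u)-H(1,v)|$ and $|D(1,u)-D(1,v)|$ by $C^{\phi(4,u)}|x_\circ|\,H(1,u)$, and conclude via the algebraic identity for the ratio of frequencies together with $\phi\ge 1$ at contact points.

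The one (minor) difference worth noting is how you control $\|\nabla w\|_{L^2(B_1)}$. The paper applies the fundamental theorem of calculus to $\nabla u$ and then invokes the tangential $H^2$ estimate \eqref{eq:H2tangentialestimates} (itself obtained from \Cref{lem:ellipticestimates} on infinitesimal differences). You instead apply the fundamental theorem of calculus to $u$ to bound $\|w\|_{L^2(B_2\setminus B_1)}\le |x_\circ|\,\|\nabla u\|_{L^2(B_4)}$, and then use \Cref{lem:ellipticestimates} directly on the finite difference $w$ (legitimate since $w\lap w\ge 0$ by \Cref{rem:wlapw}) to recover the gradient bound. This is a slight streamlining: it bypasses the intermediate $H^2$ estimate and works only with first derivatives of $u$. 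Both routes lead to the same exponential factor $C^{\phi(4,u)}$ via \Cref{lem:Hlambdau} and the identity $\int_{B_4}|\nabla u|^2=4^{n-2}\phi(4,u)H(4,u)$.
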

        \begin{proof}
            By scaling, it is enough to prove that if $u$ solves \eqref{eq:thinobstacle} in $B_R$, for some $R\geq 4$, then
            \begin{equation}\label{eq:closenessbyfreqscaled}
                \left|\frac{\phi(1,u)}{\phi(1,u(z +\cdot))}-1\right|\leq|z| C(n)^{\phi(4,u)}\text{ for all }z\in \Lambda(u)\cap B_2.
            \end{equation}
            Fix $z$ and set $v:=u(z+\cdot)$, by \cref{eq:H2tangentialestimates}, \Cref{lem:Hlambdau} and \Cref{lem:ellipticestimates} we have
            \begin{align*}
                |D(1,u)-D(1,v)|&\leq \|\nabla(u-v)\|_{L^2(B_1)}\|\nabla(u+v)\|_{L^2(B_1)}\\
                &\leq C(n)|z|\|\nabla\de_{\frac{z}{|z|}} u\|_{L^2(B_{1+|z|})}(\|u\|_{L^2(B_{3/2})}+\|v\|_{L^2(B_{3/2})})\\
                &\leq C(n)|z|\|u\|_{L^2(B_4)}^2\leq C(n) |z| 4^{\phi(4,u)} H(1,u),
            \end{align*}
            and similarly we get
            \begin{align*}
                |H(1,u)-H(1,v)|&\leq C(n) |z| 4^{\phi(4,u)} H(1,u).
            \end{align*}
            
            On the other hand we have 
            \[
            \begin{split}
            \left|\frac{\phi(1,u)}{\phi(1,v)} -1\right| 
            &= \left|\frac{D(1,u)H(v,1)-D(1,v)H(u,1)}{D(1,v)H(u,1)}\right| \\
            &= \left|\frac{\big(D(1,u)-D(1,v)\big)H(v,1) - D(1,v)(H(1,u)-H(1,v)}{D(1,v)H(u,1)}\right| 
            \\
            &\le \frac{\big|D(1,u)-D(1,v)\big|}{H(1,u) \phi(1,v)} + \frac{|H(1,u)-H(1,v)|}{H(u,1)} 
            \end{split}
            \]
            So, using $\phi(1,v)\geq \phi(0+,v)\geq 1$ we obtain \eqref{eq:closenessbyfreqscaled}.
        \end{proof}
        We use also this general result
        \begin{lemma}\label{lem:holderdecay}
            There are $C(n)>0,\alpha(n)\in(0,1)$ such that if $f\colon B_1\to\R$, $f\lap f\geq 0$ in $B_1$ and
            \begin{equation*}
                B_1\cap \{x_{n-1} < -\delta, x_n=0\} \subset \{f=0\}
            \end{equation*}
            for some $\delta\in (0,1/20)$, then
            \begin{equation}\label{eq:holderdecay}
                \sup_{\mathcal{N}_{2\delta}\cap B_{1/2}}|f|\leq C\delta^\alpha \|f\|_{L^2(B_1\setminus B_{1/2})},
            \end{equation}
            where $\mathcal{N}_\delta:=\{x_n^2+(x_{n-1})_+^2\leq \delta^2\}$. %Furthermore, if we also have
            %\begin{equation*}
             %   \supp \lap f\subset \{x_{n-1}<\delta,x_n=0\},
            %\end{equation*}
            %then $\osc(f,B_{\delta}(z))\leq C\delta^\alpha\|f\|_{L^2(B_1\setminus B_{1/2})},$ for all $z\in B_{1/4}$.
        \end{lemma}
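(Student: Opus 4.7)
The plan is to exploit that $f^2$ is nonnegative and subharmonic, and to derive a capacity-based Hölder decay at the slit $E := B_1\cap\{x_{n-1}<-\delta,\, x_n=0\}$, on which $f$ vanishes. Indeed, since $f\Delta f\geq 0$, one has $\Delta(f^2)=2|\nabla f|^2+2f\Delta f\geq 0$ and $f^2\equiv 0$ on $E$. A rescaled version of \Cref{lem:ellipticestimates} provides the starting bound $\sup_{B_{3/4}}|f|\leq C\|f\|_{L^2(B_1\setminus B_{1/2})}=:CM$.

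Next I would verify that $E$ has uniform $(n-1)$-dimensional density near every point of $\mathcal{N}_{2\delta}\cap B_{1/2}$ down to scale $\delta$. Concretely, for $x_0\in\mathcal{N}_{2\delta}\cap B_{1/2}$ and $\rho\in[8\delta,1/4]$, a direct check using $(x_{0,n-1})_+^2+x_{0,n}^2\leq 4\delta^2$ and $\delta\leq\rho/8$ shows that $y_0 := (x_0',\,x_{0,n-1}-\delta-\rho/2,\,0)$ lies in $E$ with $|y_0-x_0|\leq\rho$, and that the thin-space disk of radius $\rho/4$ centered at $y_0$ is contained in $E\cap B_{2\rho}(x_0)$. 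This yields a universal lower bound on $\operatorname{cap}(E\cap B_{2\rho}(x_0),\,B_{4\rho}(x_0))$, valid for all $n\geq 2$ (logarithmic when $n=2$, Newtonian when $n\geq 3$). By the classical Maz'ya-Wiener oscillation estimate for nonnegative subharmonic functions vanishing on a set of positive capacity density, there exists a universal $\eta>0$ with $\sup_{B_\rho(x_0)}f^2\leq(1-\eta)\sup_{B_{2\rho}(x_0)}f^2$ for every such $\rho$. Iterating across the $\sim\log_2(1/\delta)$ dyadic scales between $1/4$ and $8\delta$ and combining with the elliptic bound above yields $\sup_{B_{8\delta}(x_0)}f^2\leq C\delta^{2\alpha}M^2$ for a universal $\alpha>0$; taking square roots and letting $x_0\in\mathcal{N}_{2\delta}\cap B_{1/2}$ vary gives \eqref{eq:holderdecay}. (The complementary range $\delta\geq 1/32$, where no iteration is possible, reduces to the elliptic bound after enlarging $C$.)

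The main obstacle I expect is the geometric density verification near the ``edge'' points $x_0$ with $x_{0,n-1}\in(-\delta,2\delta]$, where $\mathcal{N}_{2\delta}$ protrudes slightly past $E$; the uniform shift $y_0$ above is designed precisely to handle deep points ($x_{0,n-1}\leq-\delta$, essentially on $E$) and edge points simultaneously in all dimensions $n\geq 2$. The Maz'ya-Wiener input itself is classical; if one prefers to avoid citing it, the oscillation decay can be obtained by comparing $f^2$ directly with an explicit superharmonic barrier of the form $\tilde\psi^\beta$, with $\tilde\psi(x):=\psi_{1/2}(x_{n-1}+\delta,\,x_n)$ and $\beta\in(0,1)$, which is superharmonic off the slit and has the desired $r^{\beta/2}$ vanishing behavior along $E$.
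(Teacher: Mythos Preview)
Your main argument is correct and follows a genuinely different route from the paper. The paper works on the energy side: for $z$ on the thin space inside $\mathcal N_\delta\cap B_{1/2}$ and scales $\rho\in[4\delta,1/4]$, it considers the Morrey-type quantity $I(z,\rho)=\int_{B_\rho(z)}|\nabla f|^2|y-z|^{2-n}\,dy$, uses a Poincar\'e inequality on the rescaled annulus (available because $f(z+\rho\,\cdot)$ vanishes on a fixed portion of the thin space), and applies Widman's hole-filling trick to get $I(z,\rho/2)\le\frac{C}{C+1}I(z,\rho)$; the $L^\infty$ bound on $\mathcal N_{2\delta}$ then follows from Poincar\'e and subharmonicity of $f^2$. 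Your approach instead stays on the pointwise side, exploiting subharmonicity of $f^2$ and the uniform capacity density of the slit to iterate the Maz'ya--Wiener oscillation decay for $f^2$ directly. Both arguments are classical; the paper's is entirely self-contained, while yours is cleaner but imports a (standard) potential-theoretic black box. Your geometric density verification for edge points is carefully done and correct.

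One caution on your parenthetical barrier alternative: comparing $f^2$ globally with $\tilde\psi^\beta$ does not work as stated, because $\tilde\psi$ vanishes on the slit and hence $\inf_{\partial B_{3/4}}\tilde\psi^\beta=0$, so no finite multiple of $\tilde\psi^\beta$ dominates $f^2$ on $\partial B_{3/4}$. A barrier argument of this flavor can be made to work, but it requires a local (scale-by-scale) comparison rather than a single global one, at which point it essentially reproduces the capacity iteration you already gave. Since this is only a side remark and your primary argument via Maz'ya--Wiener is sound, this does not affect the validity of the proposal.
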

        \begin{proof}
            By assumption $f^2$ is sub-harmonic, so the mean value inequality gives
            \begin{equation}\label{eq:L2toLinfty}
                \sup_{B_{\rho/2}(z)}f^2\leq C(n) \|f(z+\rho\cdot)\|_{L^2(B_1\setminus B_{1/2})}^2\quad \text{ for all }B_{2\rho}(z)\subset B_1.
            \end{equation}
            Now take $z\in \mathcal{N}_\delta\cap\{x_n=0\} \cap B_{1/2}$, a smooth cut-off $\mathcal{X}_{B_{1/2}}\leq \xi\leq \mathcal{X}_{B_1}$ and some $\rho\in[4\delta,1/4]$. If we set $\tilde f:=f(z+\rho\cdot)$ the choice of $\rho$ gives that (independently from $z$) $\tilde f$ vanish in $B_1\cap\{x_{n-1}<-2/3,x_n=0\}$. Hence, these $\tilde f$ satisfy a Poincar\'{e} inequality $\int_{B_1\setminus B_{1/2}}\tilde{f}^2\leq C(n)\int_{B_1\setminus B_{1/2}}|\nabla \tilde f|^2$. Now compute
            \begin{align}
                \int_{B_1}|\nabla \tilde f|^2 |x|^{2-n}\xi^2(x)\, dx & \leq \frac 1 2 \int_{B_1}\lap(\tilde f^2)|x|^{2-n}\xi^2 \notag\\ \notag
                &=\frac 1 2 \int_{B_1} \tilde f^2\big(\underbrace{-c_n\delta_0\xi^2}_{\leq 0} + 2\nabla(|x|^{2-n})\cdot\nabla(\xi^2) +|x|^{2-n}\lap(\xi^2)\big)\\ \label{wwbeitohow}
                &\leq C(n)\int_{B_1\setminus B_{1/2}}\tilde{f}^2\leq C(n)\int_{B_1\setminus B_{1/2}}|\nabla \tilde f|^2
            \end{align}
            where we used that the derivatives of $\xi$ are supported in the annulus, and the aforementioned Poincar\'{e} inequality. So if we define the scale-invariant functional
            $$
            I(z,\rho):=\int_{B_\rho(z)}|\nabla f(y)|^2|y-z|^{2-n}\, dy=\int_{B_1}|\nabla \tilde f(x)|^2|x|^{2-n}\, dx
            $$
            by the {\em hole-filling trick}\footnote{Namely, note that as a consequence of \eqref{wwbeitohow}  we have shown $I(z,\rho/2) \le C\int_{B_1\setminus B_{1/2}}|\nabla \tilde f|^2  \le C(I(z,\rho)-I(z,\rho/2))$.} we find $I(z,\rho/2)\leq \tfrac{C}{1+C}I(z,\rho)$ so iterating and using that $\rho\mapsto I(z,\rho)$ is increasing:
            \begin{equation*}
                I(z,4\delta)\leq C(n) \delta^{2\alpha} I(z,1/4)\quad \forall z\in \mathcal N_\delta\cap \{x_n=0\}\cap B_{1/2}.
            \end{equation*}
            Now using \eqref{eq:L2toLinfty}  with $\rho:=4\delta$ and Poincare' in the annulus we find
            $$
            \sup_{B_{2\delta}(z)}|f|\leq C \|\tilde f\|_{L^2(B_1\setminus B_{1/2})}\leq C\|\nabla \tilde f\|_{L^2(B_1\setminus B_{1/2})}\leq C I(z,4\delta)^{\frac12} \leq C\delta^\alpha I(z,1/4)^{\frac12}
            $$
            to conclude we observe that our computation (now with $\rho=1/4$) also gives
            $$
            I(z,1/4)\leq C\int_{B_1}\tilde f^2\leq C \sup_{B_{1/4}(z)}f^2\leq C \sup_{B_{3/4}}f^2\leq C\|f\|_{L^2(B_1\setminus B_{1/2})}^2,
            $$
            which finally gives \eqref{eq:holderdecay}, since $z$ was arbitrary. %Notice that we do not need the weighted Poincar\'{e} inequality, because we apply it in the annulus where the weight $|x|^{2-n}\sim_n 1$.
            
            % We prove the second part of the statement. For $k\geq 0$ we split the domain in the sets
            % $$
            % A_0:=\mathcal{N}_\delta\cap B_{1/4};\ A_k:= B_{1/4}\cap (\mathcal{N}_{4^{k+1}\delta}\setminus \mathcal{N}_{4^{k}\delta});\  \tilde A_k:= B_{1/4}\cap (\mathcal{N}_{2\cdot 4^{k+1}\delta}\setminus \mathcal{N}_{4^{k}\delta/2}).
            % $$
            % Given $z\in B_{1/4}$, if $z\in A_0$ then $B_\delta(z)\subset \mathcal N_{2\delta}$ so we can apply the first part of this Lemma. Hence we assume $z\in A_k$ for some $k\geq 1$ (this forces $4^k\delta<1$ otherwise $A_k=\emptyset$), so that we have $B_\delta(z)\subset \tilde A_k$. Since $f$ is harmonic in $\tilde A_k$, there is $z^*\in\de \tilde A_k$ such that
            % $$
            % \sup_{\tilde A_k}|\nabla f|=|\nabla f(z^*)|\leq \fint_{B_{R/2}(z^*)}|\nabla f|\lesssim_n\frac 1 R \sup_{B_{R}(z^*)} |f|
            % $$
            % provided $B_R(z^*)\subset B_1\setminus\mathcal N_\delta$. The choice $R:=4^k\delta/100$ also ensures $B_R(z^*)\subset B_{1/2}\cap \mathcal N_{4^{k+2}\delta}$ and gives us
            % $$
            % \osc(f,B_\delta(z))\leq\delta\sup_{\tilde A_k}|\nabla f|\leq \frac {\delta}R\sup_{B_{1/2}\cap \mathcal N_{4^{k+2}\delta}}|f|\leq C(n) 4^{k(\alpha-1)}\delta^\alpha \|f\|_{L^2(B_1\setminus B_{1/2})},
            % $$
            % where in the last step we applied the first part of the statement at larger scales, namely with $\delta\leftarrow R$, this can be done as $\delta<R<1/20$. Since the constant on the right hand side is uniformly bounded for $k\geq 0$, we are done.
        \end{proof}
    \subsection{Estimates on the difference between {\it u} and a 2D solution}\label{subsec:estimatesdifference}
        In this subsection we give some estimates on $w:=u-\psi_\lambda$, with $\lambda\in\{\frac32,\frac72,\ldots\}\cup\{1,3,5,\ldots\}$. They will be crucial in \Cref{sec:frequency}. We remark that the proofs for the case $\lambda$ half-odd-integer present the difficulties of both the cases $\lambda$ even and $\lambda$ odd.
        
        The explicit relationship between $\eta$ and $\delta$ in the following Lemma will be crucial. 
        \begin{lemma}\label{lem:barrier}
            Let $u$ solve \eqref{eq:thinobstacle}, $\lambda\in\{\tfrac{3}{2},\tfrac{7}{2},\tfrac{11}{2},\ldots\}$ and $\eta,\delta \in (0,1)$. If for some $\tau>0$ we have $\sup_{B_{1}}|u-\tau\psi_\lambda|\leq \eta$ and $C\eta \leq \tau\delta^\lambda$, then
            \begin{align}
	            \label{eq:barrierinclusionhard} B_{1-\delta} \cap \{x_{n-1}<-\delta,x_n=0\}&\subset \{u=0\}, \\
	            \label{eq:barrierinclusioneasy} B_{1}\cap \{x_{n-1}>\delta,x_n=0\}&\subset \{u>0\}.
            \end{align}
            The large constant $C$ depends only on $n$ and $\lambda$.
        \end{lemma}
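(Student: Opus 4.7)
For the easy inclusion \eqref{eq:barrierinclusioneasy}, I would argue directly: on $\{x_{n-1}>\delta,\,x_n=0\}\cap B_1$, formula \eqref{eq:signpsilambda} gives $\tau\psi_\lambda(x)=\tau a x_{n-1}^\lambda\ge \tau a\delta^\lambda\ge aC\eta$, so $u(x)\ge \tau\psi_\lambda(x)-\eta\ge(aC-1)\eta>0$ provided $C\ge 2/a$.

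For the harder inclusion \eqref{eq:barrierinclusionhard}, the plan is a barrier-and-comparison argument. The natural candidate is the translated model
\[\Phi(x):=\tau\psi_\lambda(x+\delta e_{n-1}),\]
which is itself a solution of \eqref{eq:thinobstacle} (by translation invariance) and whose thin-space zero set is exactly $\{x_{n-1}\le-\delta,\,x_n=0\}$. By the comparison principle for the thin obstacle problem applied in $B_{1-\delta}$, the inclusion \eqref{eq:barrierinclusionhard} would follow from $\Phi\ge u$ on $\partial B_{1-\delta}$: that would give $u\le\Phi$ in $B_{1-\delta}$, and combined with $u\ge 0$ on the thin space this forces $u\equiv 0$ on the slab.

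Using $u\le\tau\psi_\lambda+\eta$, the boundary inequality $\Phi\ge u$ on $\partial B_{1-\delta}$ reduces to a pointwise positivity estimate
\[\tau\bigl[\psi_\lambda(x+\delta e_{n-1})-\psi_\lambda(x)\bigr]\ge\eta\quad\text{on }\partial B_{1-\delta}.\]
A first-order expansion using $\partial_{x_{n-1}}\psi_\lambda=b\psi_{\lambda-1}$ yields $\tau\delta b\psi_{\lambda-1}(x)+O(\tau\delta^2|x|^{\lambda-2})$. Two obstructions appear: for $\lambda-1\in\tfrac12+2\N$ the function $\psi_{\lambda-1}$ changes sign off the thin space (so the linear correction is not uniformly positive), and at boundary points lying deep inside the contact set of $\psi_\lambda$ (e.g.\ near $x=-(1-\delta)e_{n-1}$) the full difference degenerates to zero.

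To overcome this I would replace $\Phi$ by an enlarged barrier $\widetilde\Phi:=\Phi+\eta\chi$, where $\chi\ge 0$ is a non-negative corrector in $H^1(B_{1-\delta})$ vanishing on $\{x_{n-1}\le-\delta,\,x_n=0\}\cap B_{1-\delta}$ and bounded below by $1$ on $\partial B_{1-\delta}$ (for instance a capacitary/harmonic-measure type function for the slab). By construction $\widetilde\Phi\ge u$ on $\partial B_{1-\delta}$ and $\widetilde\Phi\equiv 0$ on the slab, so the conclusion would follow once $\widetilde\Phi$ is a supersolution of \eqref{eq:thinobstacle}. The main obstacle is precisely verifying $\Delta\widetilde\Phi\le 0$ as measures on $B_{1-\delta}$: this balances the negative measure $\tau\Delta\psi_\lambda(\cdot+\delta e_{n-1})=-c\tau(x_{n-1}+\delta)_-^{\lambda-1}\delta_0(x_n)$ from \eqref{eq:laplacianpsilambda} against the possibly positive $\eta\Delta\chi$ supported on the slab. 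The scale hypothesis $C\eta\le\tau\delta^\lambda$ is exactly what is needed for a quantitative Hopf-type estimate on $\partial_{x_n}\chi$ to make the negative contribution dominate throughout $\{x_{n-1}\le-\delta\}\cap B_{1-\delta}$; establishing this cancellation is the delicate technical heart of the argument.
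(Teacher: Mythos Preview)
Your argument for the easy inclusion \eqref{eq:barrierinclusioneasy} is correct and essentially identical to the paper's.

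For the hard inclusion \eqref{eq:barrierinclusionhard}, your global comparison strategy is genuinely different from the paper's, and as stated it has a real gap. The corrector $\chi$ you describe---vanishing on the slab $\{x_{n-1}\le-\delta,\,x_n=0\}\cap B_{1-\delta}$ and bounded below by $1$ on $\partial B_{1-\delta}$---necessarily has a square-root singularity in its normal derivative at the edge $\{x_{n-1}=-\delta,\,x_n=0\}$ of the slab: $\partial_{x_n}\chi(x',x_{n-1},0+)$ blows up like $|x_{n-1}+\delta|^{-1/2}$ there. On that same set the negative measure $\Delta\Phi$ has density $-c\tau(x_{n-1}+\delta)_-^{\lambda-1}$, which \emph{vanishes} as $x_{n-1}\uparrow-\delta$. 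Hence near the edge the positive contribution $\eta\Delta\chi$ dominates regardless of how large $C$ is taken in $C\eta\le\tau\delta^\lambda$, and $\Delta\widetilde\Phi\le 0$ fails. The obstruction is structural, not a matter of constants.

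The paper avoids this entirely by working \emph{locally}: fix a point $z$ in the claimed contact region and build a barrier only in a small ball $B_\rho(z)$ with $\rho\sim\delta$. The key input is the explicit expansion $\psi_\lambda(-e_{n-1}+x)\le -a|x_n|+B|x|^2$ for $|x|\le 1/2$, which after rescaling by $|z_{n-1}|$ yields $u(z+y)\le \tau|z_{n-1}|^{\lambda-2}(-a|z_{n-1}|\,|y_n|+B\rho^2)+\eta$ on $\partial B_\rho$. Choosing $\rho$ a small multiple of $\delta$ and using $\eta\lesssim\tau\delta^\lambda$, this is bounded above by an explicit \emph{strictly superharmonic} polynomial $\phi_{z,c}(y)=2\tau B|z_{n-1}|^{\lambda-2}(-2ny_n^2+|y'|^2+y_{n-1}^2+c)$. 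A standard sliding argument in $c$ then forces $u(z)=0$. The locality is what sidesteps the edge singularity that kills your global corrector.
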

        \begin{proof}
            Let us deal with \eqref{eq:barrierinclusionhard}. By inspection of the (explicit) function $\psi_\lambda$ we see that there are positive constants $a,B$ (depending on $\lambda$ and $n$) such that
            \begin{equation}\label{eq:psilambdabarrierbounds}
                \psi_\lambda(-e_{n-1}+x)\leq -a|x_n|+B|x|^2\quad \text{for all } x\in \overline B_{1/2}.
            \end{equation}
            Fix some $z\in \{x_n=0\}\cap B_{1-\delta}$ with $z_{n-1}\leq-\delta$, we are going to prove that if $C(n,\lambda)\eta\leq \tau\delta^\lambda$ then $u(z)=0$. Using $\lambda$-homogeneity and translation invariance of $\psi_\lambda$ we get for all $y\in \de B_\rho$ with $\rho\leq \delta/2$ that 
            \begin{align*}
                u(z+y)&\leq \tau\psi_\lambda(z_{n-1}e_{n-1}+y)+\eta=\tau|z_{n-1}|^{\lambda}\psi_\lambda(-e_{n-1}+y/|z_{n-1}|)+\eta\\
                &\leq \tau |z_{n-1}|^{\lambda-2}\left(-a|z_{n-1}||y_n|+B\rho^2+\frac{\eta}{\tau |z_{n-1}|^{\lambda-2}}\right)=:(*),
            \end{align*}
            where we used $y/|z_{n-1}|\in B_{1/2}$, since $|z_{n-1}|\geq \delta \geq 2\rho$. If we take $\eta< B\rho^2\tau \delta^{\lambda-2}$, recalling $|y|=\rho$ and $|z_{n-1}|\geq \delta$, we have
            \begin{align*}
                (*)&<\tau|z_{n-1}|^{\lambda-2}\left(-a\delta|y_n| + 2B\rho^2 \right)\\
                &=\tau|z_{n-1}|^{\lambda-2}\left(2By_n^2-a\delta|y_n| + 2B|y'|^2+2By_{n-1}^2 \right)\quad \mbox{on } \partial B_\rho,
            \end{align*}
            where $y'= (y_1, \dots, y_{n-2})$.
            Now for $\rho:= \delta/C(a,B,n)$ (this gives the claimed dependence of $\eta$ from $\delta$) we have $-a\delta|y_n|\leq -4nB y_n^2$. We proved that for all constants $c> 0:$
            \begin{equation}\label{eq:barrieraux}
                u(z+y)<2\tau B|z_{n-1}|^{\lambda-2}\left(-2ny_n^2+|y'|^2+y_{n-1}^2+c\right):=\phi_{z,c}(y)\quad\text{ for all }y\in\de B_\rho.
            \end{equation}
            Notice that $\lap \phi_{z,c}<0$ everywhere. Let $c^*$ be the smallest value for which $u(z+\cdot)\leq \phi_{z,c}$ in $\overline B_\rho$: if $c^*=0$ then $u(z)\leq \phi_{z,0}(0)=0$ and \eqref{eq:barrierinclusionhard} follows. Lets us prove that $c^*>0$ is impossible.
            
            If $c^*>0$, then there is $x^*\in \overline B_\rho(z)$ such that $\phi_{z,c^*}$ touches from above $u(z+\cdot)$ at $x^*$. In fact, $x^*\in B_\rho$ because $u(z+\cdot)$ and $\phi_{z,c^*}$ never touch on $\de B_\rho(z)$, by \eqref{eq:barrieraux}. Also, $u(z+\cdot)$ cannot be harmonic around $x^*$, because we would have $0>\lap \phi_{z,c}(x^*) \geq \lap u(z+x^*)=0$. This proves $x^*_n=0$ and $u(z+x^*)=0$, an so we find a contradiction, namely
            $$
            0=u(z+x^*)=\phi_{z,c^*}(x^*)=2\tau B|z_{n-1}|^{\lambda-2}(|{x^*}'|^2+{x_n^*}^2+c^*)>0.
            $$

            Now we prove \eqref{eq:barrierinclusioneasy}. If $\eta\leq \tau\delta^\lambda /C$ then for all $z\in B_{1}\cap \{u=0,x_n=0\}$ we have
            $$
            \tau (z_{n-1})_+^\lambda = |(u-\tau\psi_\lambda)(z)|\leq \eta\leq \tau\delta^\lambda /C,
            $$
            so $z_{n-1}<\delta$ as we can take $C>1$. Since $u\geq 0$ on the thin space, this proves \eqref{eq:barrierinclusioneasy}.
        \end{proof}
        We have an analogous Lemma for odd frequencies.
        \begin{lemma}\label{lem:barrierodd}
            Let $u$ solve \eqref{eq:thinobstacle}, $\lambda\in\{1,3,5,\ldots\}$ and $\eta,\delta \in (0,1)$. If for some $\tau>0$ we have $\sup_{B_{1}}|u-\tau\psi_\lambda|\leq \eta$ and $C\eta \leq \tau\delta^\lambda$, then
            \begin{align}\label{eq:barrierinclusionodd}
                B_{1-\delta} \cap \{|x_{n-1}|>\delta,x_n=0\}&\subset \{u=0\}.
            \end{align}
            The large constant $C$ depends only on $n$ and $\lambda$.
        \end{lemma}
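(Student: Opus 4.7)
The plan is to carry out an essentially verbatim adaptation of the barrier argument used in \Cref{lem:barrier}; the only real work is replacing the Taylor-type bound \eqref{eq:psilambdabarrierbounds} by its analogue for $\lambda\in 1+2\N$. First, I would observe that for $\lambda\in 1+2\N$ the function $\psi_\lambda$ given by \eqref{eq:psilambdadef} is \emph{even} in $x_{n-1}$: indeed $\text{Im}\bigl((-x_{n-1}+i|x_n|)^\lambda\bigr) = (-1)^\lambda \text{Im}\bigl((-1)^\lambda(x_{n-1}-i|x_n|)^\lambda\bigr) = \text{Im}(x_{n-1}+i|x_n|)^\lambda$ when $\lambda$ is odd, after taking conjugates. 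Consequently the conclusion on $\{x_{n-1}<-\delta\}$ follows from the conclusion on $\{x_{n-1}>\delta\}$ by reflection, and I may assume $z\in B_{1-\delta}\cap\{x_{n-1}\ge\delta,\, x_n=0\}$.

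Next I would establish the key pointwise bound: there exist $a=a(n,\lambda)>0$ and $B=B(n,\lambda)>0$ such that
\begin{equation*}
    \psi_\lambda(e_{n-1}+x) \le -a|x_n| + B(x_{n-1}^2+x_n^2) \qquad \text{for all } x\in \overline{B_{1/2}}.
\end{equation*}
Indeed, by \eqref{eq:signpsilambda} we have $\psi_\lambda(e_{n-1},0)=0$ and $\partial_{x_{n-1}}\psi_\lambda(e_{n-1},0)=0$, while the distributional formula \eqref{eq:laplacianpsilambda} gives $\partial_{x_n}\psi_\lambda(e_{n-1},0+) = -\tfrac{c}{2}<0$. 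Since $\psi_\lambda$ is smooth away from $\{x_n=0\}$ and even in $x_n$, a Taylor expansion yields $\psi_\lambda(e_{n-1}+se_{n-1}+te_n) = -a\lambda|t|(1+O(|s|))+O(|t|^3)$ uniformly on $\overline{B_{1/2}}$, from which the claim follows by absorbing the cross term $|s||t|$ into $\tfrac12(s^2+t^2)$.

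From here the proof proceeds exactly as in \Cref{lem:barrier}. By $\lambda$-homogeneity, for $|y_{n-1}|,|y_n|\le z_{n-1}/2$,
\begin{equation*}
\psi_\lambda(z_{n-1}+y_{n-1},y_n) \le z_{n-1}^{\lambda-2}\bigl(-az_{n-1}|y_n|+B(y_{n-1}^2+y_n^2)\bigr),
\end{equation*}
and because $\psi_\lambda$ is $2$-dimensional, this is also the bound on $\psi_\lambda(z+y)$. Assuming $C\eta\le \tau\delta^\lambda$ with $C$ sufficiently large, choosing $\rho:=\delta/C(n,\lambda)$ so that both $\rho\le z_{n-1}/2$ and $a\delta|y_n|\ge (4n+2)By_n^2$ for $|y_n|\le\rho$, and using $|y|^2=|y'|^2+y_{n-1}^2+y_n^2=\rho^2$ on $\partial B_\rho$, one obtains for every $c>0$
\begin{equation*}
u(z+y) < 2\tau B z_{n-1}^{\lambda-2}\bigl(-2ny_n^2+y_{n-1}^2+|y'|^2+c\bigr) =: \phi_{z,c}(y)\qquad \text{on }\de B_\rho,
\end{equation*}
and $\lap\phi_{z,c}<0$ everywhere. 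Letting $c^*\ge 0$ be the smallest value for which $u(z+\,\cdot\,)\le\phi_{z,c^*}$ on $\overline B_\rho$, if $c^*=0$ we conclude immediately $0\le u(z)\le\phi_{z,0}(0)=0$; if $c^*>0$ the touching point $x^*$ lies in $B_\rho$, must satisfy $x^*_n=0$ and $u(z+x^*)=0$ (else $u$ would be harmonic near $x^*$, contradicting $\lap\phi_{z,c^*}<0\le \lap u$ at a point of touching from above), and then $\phi_{z,c^*}(x^*)=2\tau B z_{n-1}^{\lambda-2}(|x^{*\prime}|^2+(x^*_{n-1})^2+c^*)>0$, a contradiction. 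Thus $c^*=0$ and $u(z)=0$.

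There is no genuine obstacle here: the whole point is that once \eqref{eq:psilambdabarrierbounds} is replaced by its odd-$\lambda$ analogue (centered at $+e_{n-1}$ rather than $-e_{n-1}$), the superharmonic barrier $\phi_{z,c}$ and the touching-point contradiction transfer word-for-word. The only conceptual difference from \Cref{lem:barrier} is that, since $\psi_\lambda$ vanishes on the \emph{entire} thin space for odd $\lambda$, no analogue of \eqref{eq:barrierinclusioneasy} is available or needed, and the conclusion is symmetric in the sign of $x_{n-1}$.
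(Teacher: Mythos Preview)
Your proof is correct and follows essentially the same approach as the paper, which simply notes that the barrier argument of \Cref{lem:barrier} goes through verbatim because $\psi_\lambda$ for odd $\lambda$ still satisfies the local expansion \eqref{eq:psilambdabarrierbounds} (at both $\pm e_{n-1}$), is $\lambda$-homogeneous, and is translation invariant in $x_1,\dots,x_{n-2}$. You have spelled out the details the paper leaves implicit---the Taylor bound at $e_{n-1}$ and the reduction to $z_{n-1}>\delta$ via the evenness of $\psi_\lambda$ in $x_{n-1}$ (which, combined with the reflection invariance of \eqref{eq:thinobstacle}, legitimately transfers the conclusion to $z_{n-1}<-\delta$).
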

        \begin{proof}
            We can repeat word by word the proof of \eqref{eq:barrierinclusionhard} in \Cref{lem:barrier} above. In fact, in that argument, we only used that $\psi_\lambda$ satisfies \eqref{eq:psilambdabarrierbounds}, that it is $\lambda$-homogeneous and that it is translation invariant along the directions $1,2,\dots,{n-2}$. All these properties are true for also $\lambda$ odd.
        \end{proof}
        We can estimate now the laplacian of $u-\tau\psi_\lambda$.
        \begin{lemma}\label{lem:laplacianL1}
            Let $u$ solve \eqref{eq:thinobstacle}, $\lambda\in\{\tfrac{3}{2},\tfrac{7}{2},\frac{11}{2},\ldots\}\cup\{1,3,5,\ldots\}$, $\tau\geq 0$ and $\delta \in (0,1/10)$. 
            If $w:=u-\tau\psi_\lambda$ then
            \begin{equation*}
                \frac 1 C\int_{\mathcal{C}_\delta\cap B_{1}}|\lap w|\leq \tau \delta^\lambda +\|w\|_{L^2(B_2\setminus B_{1})},
            \end{equation*}
            where $\mathcal{C}_\delta = \{x_{n-1}^2+x_n^2\leq \delta^2\}$ and $C=C(n,\lambda)>0$.
        \end{lemma}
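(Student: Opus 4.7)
My strategy is to split $\lap w=\lap u-\tau\lap\psi_\lambda$ and exploit that, for the frequencies under consideration, both terms on the right are nonpositive measures. Indeed $\lap u\le 0$ by the definition of the thin obstacle problem, while formula \eqref{eq:laplacianpsilambda} exhibits $\lap\psi_\lambda$ as a nonpositive multiple of a Dirac measure on the thin space: for $\lambda\in\tfrac{3}{2}+2\N$ this is because $(x_{n-1})_-^{\lambda-1}\ge 0$, and for $\lambda\in 1+2\N$ because $\lambda-1$ is even and so $x_{n-1}^{\lambda-1}\ge 0$. Consequently
\begin{equation*}
|\lap w|\le (-\lap u)+\tau(-\lap\psi_\lambda),
\end{equation*}
and it is enough to bound each of the two nonnegative quantities separately on $\mathcal{C}_\delta\cap B_1$.

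The $\psi_\lambda$ contribution is immediate: inside $\mathcal{C}_\delta$ the density of $|\lap\psi_\lambda|$ on the thin space is bounded by $c|x_{n-1}|^{\lambda-1}\le c\delta^{\lambda-1}$, and the $(n-1)$-dimensional measure of $\mathcal{C}_\delta\cap B_1\cap\{x_n=0\}$ is $O(\delta)$. Multiplying the two gives $\tau\int_{\mathcal{C}_\delta\cap B_1}|\lap\psi_\lambda|\le C\tau\delta^\lambda$.

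For the $\lap u$ contribution I use a nonnegative product cutoff $\xi(x)=\chi(x_1,\dots,x_{n-2})\,\eta(\sqrt{x_{n-1}^2+x_n^2})$, where $\chi\equiv 1$ on the unit ball of $\R^{n-2}$ and is supported in the ball of radius $3/2$, while $\eta\equiv 1$ on $[0,\delta]$ and is supported in $[0,2\delta]$ with $|\eta'|\le C/\delta$. Then $\mathcal{X}_{\mathcal{C}_\delta\cap B_1}\le\xi\le \mathcal{X}_{\mathcal{C}_{2\delta}\cap B_{3/2}}$, and a short calculation shows that $\|\nabla\xi\|_{L^2}\le C$ uniformly in $\delta$: the dangerous term $|\eta'|^2=O(1/\delta^2)$ is integrated over a set of Lebesgue measure $O(\delta^2)$, so the codimension-2 geometry of $\mathcal{C}_\delta$ exactly compensates. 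Integrating by parts,
\begin{equation*}
\int_{\mathcal{C}_\delta\cap B_1}(-\lap u)\le \int (-\lap u)\,\xi=\int \nabla u\cdot\nabla\xi=\int \nabla w\cdot\nabla\xi+\tau\int \nabla\psi_\lambda\cdot\nabla\xi.
\end{equation*}
The first summand is controlled by Cauchy--Schwarz together with \Cref{lem:ellipticestimates} applied to $w$ (legitimate, since $\tau\psi_\lambda$ is itself a solution of \eqref{eq:thinobstacle}, so $w\lap w\ge 0$ by \Cref{rem:wlapw}), yielding the bound $C\|\nabla w\|_{L^2(B_{3/2})}\le C\|w\|_{L^2(B_2\setminus B_1)}$. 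Undoing the integration by parts in the second summand gives $\tau\int|\lap\psi_\lambda|\,\xi\le C\tau\delta^\lambda$, by the same computation as in the previous paragraph applied to the support of $\xi$.

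The only delicate ingredient is the construction of the cutoff $\xi$: it must be localized to $\mathcal{C}_{2\delta}$, so that its pairing with $\lap\psi_\lambda$ captures only the small piece $\tau\delta^\lambda$, yet have $\|\nabla\xi\|_{L^2}$ bounded independently of $\delta$, so that pairing with $\nabla w$ loses no factor of $\delta$. This is possible precisely because $\mathcal{C}_\delta$ has codimension~$2$, as noted above. Everything else is routine manipulation of the explicit formulas for $\psi_\lambda$ and the elementary estimates of \Cref{subsec:elliptic}.
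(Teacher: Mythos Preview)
Your proof is correct and takes a genuinely different route from the paper's. The paper decomposes $|\lap w|=-\lap w+2(\lap w)_+$, bounds $(\lap w)_+\le -\tau\lap\psi_\lambda$ pointwise, then covers $\mathcal C_\delta\cap B_1$ by cubes $Q_\delta(z)$ with $z$ on the axis $\{x_{n-1}=x_n=0\}$; in each cube it tests against a cutoff rescaled to size $\delta$, integrates by parts \emph{twice} to move $\lap$ onto the cutoff, and finally sums over a $\delta$-lattice (the $\delta^{n-2}$ from the cube estimate cancels the $\delta^{-(n-2)}$ from the number of cubes). You instead use the cruder splitting $|\lap w|\le(-\lap u)+\tau(-\lap\psi_\lambda)$ and a \emph{single} global product cutoff adapted to the cylinder, integrating by parts only \emph{once}. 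The crucial codimension-2 fact that makes both arguments work---in your version, that $\|\nabla\xi\|_{L^2}$ stays bounded as $\delta\to 0$; in the paper's, that the $\delta$-powers cancel in the cube summation---is more transparent in your presentation. Your approach is shorter and avoids the covering, at the small cost of invoking the gradient bound $\|\nabla w\|_{L^2(B_{3/2})}\le C\|w\|_{L^2(B_2\setminus B_1)}$ from \Cref{lem:ellipticestimates} rather than just the $L^\infty$ bound.
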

        \begin{proof}[Proof in the case $\lambda\in \frac 3 2 + 2\N$]
            We first spell out $\lap w$. By \cref{eq:laplacianpsilambda}, for some constant $c(n,\lambda)>0$ we have on the thin space
            \begin{equation}\label{eq:laplaciantable}
                \lap w(x)=
                \begin{cases}
                    c \tau (x_{n-1})_-^{\lambda-1}\, \delta_0(x_n) \geq 0& \text{ in }\{u>0\},\\
                    \lap u \leq 0& \text{ in }\{u=0, x_{n-1}\geq 0\},\\
                    \lap u +c \tau  (x_{n-1})_-^{\lambda-1}\, \delta_0(x_n) & \text{ in }\{u=0, x_{n-1}\leq 0\}.
                \end{cases}
            \end{equation}
            in particular we find that, as measures,
            \begin{equation*}
                (\lap w)_+\leq -\tau\lap \psi_\lambda= c \tau (x_{n-1})_-^{\lambda-1}\, \delta_0(x_n).
            \end{equation*}
            This shows that if we pick any $z\in\{x_{n-1}=x_n=0\}\cap B_1$ we have, using the notation $x=(x',x_{n-1},x_n)$,
            \begin{equation*}
                \int_{Q_{2\delta}(z)}(\lap w)_+\leq c \tau \int_{[-2\delta,2\delta]^{n-2}}dx'\int_{-2\delta}^{2\delta} dx_{n-1} (x_{n-1})_-^{\lambda-1}\leq C(n,\lambda) \tau \delta^\lambda\delta^{n-2},
            \end{equation*}
            where $Q_\delta(z)$ is the cube of center $z$ and half-side $\delta$. Take now a smooth cut-off $\mathcal X_{B_{1}}\leq \xi\leq \mathcal X_{B_{2}}$ and setting $\tilde w:=w(z+\cdot)$ and $\xi_{1/\delta}=\xi(\cdot/\delta)$ we have, integrating by parts the laplacian:
            \begin{align*}
                \int_{Q_\delta(z)}|\lap w|=\int_{Q_\delta}|\lap \tilde w|& \leq -\int_{B_1}\lap \tilde w \xi_{1/\delta} +2\int_{B_1} (\lap \tilde w)_+\xi_{1/\delta}\\
                &\leq -\delta^{n-2}\int_{B_1} \lap \tilde w_{\delta } \xi + 2\int_{Q_{2\delta}(z)}(\lap w)_+\\
                &\leq C(n)\delta^{n-2}\sup_{B_\delta(z)}|w|+C(n,\lambda)\tau\delta^{n-2} \delta^\lambda\\
                &\leq \delta^{n-2} C(n,\lambda)\|w\|_{L^2(B_2\setminus B_1)}+C(n,\lambda)\tau\delta^{n-2} \delta^\lambda.
            \end{align*}
            Summing these as $z$ vary on a $(n-2)$-dimensional lattice of length $\delta$ we find the sought estimate.
        \end{proof}
        \begin{proof}[Proof in the case $\lambda\in 1+2\N$]
            The proof is identical to the case $\lambda\in \frac 3 2 +2\N$, since $-\lap\psi_\lambda=c|x_{n-1}|^{\lambda-1}\delta_0(x_n)$ and $|x_{n-1}|^{\lambda-1}$ has the same size of $(x_{n-1})_+^{\lambda-1}$ in $\mathcal C_\delta$.
        \end{proof}
        We combine the  previous Lemmas into a ``nonlinear'' bound.
        \begin{lemma}\label{lem:estimatewlapw}
            Let $u$ solve \eqref{eq:thinobstacle}, $\lambda\in\{\tfrac{3}{2},\tfrac{7}{2},\tfrac{11}{2},\ldots\}\cup\{1,3,5,\ldots\}$, $\tau\geq 0$ and $S\in O(n-1)$. If $w:=u-\tau\psi_\lambda\circ S$ then
            \begin{equation}\label{eq:boundwlapw}
                \int_{B_1}|w\lap w|\leq C \tau^{-\alpha/\lambda}\|w\|_{L^2(B_2\setminus B_1)}^{2+\alpha/\lambda},
            \end{equation}
            where $C=C(n,\lambda)$ and $\alpha=\alpha(n)>0$.
        \end{lemma}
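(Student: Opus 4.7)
The approach is to exploit that $|w\lap w|$ is concentrated on a narrow strip $\{|x_{n-1}|\leq \delta, x_n=0\}$ of the thin space, whose width I will tune to $\delta\sim (M/\tau)^{1/\lambda}$, where $M:=\|w\|_{L^2(B_2\setminus B_1)}$. After rotating coordinates so that $S=I$, the key geometric observation on the thin space outside this strip is: either $w=0$ on $\{x_{n-1}<-\delta,x_n=0\}$ (by the barrier \Cref{lem:barrier}/\Cref{lem:barrierodd} combined with $\psi_\lambda(\cdot,0)$ vanishing on $\{x_{n-1}\leq 0\}$ for half-odd-integer $\lambda$, or with $\psi_\lambda(\cdot,0)\equiv 0$ for odd $\lambda$), or else $\lap w = 0$ on $\{x_{n-1}>\delta,x_n=0\}$ (since the barrier forces $u>0$ on that side, and $\lap\psi_\lambda$ has no singular part on $\{x_{n-1}>0\}$ in either case). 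On the strip itself, \Cref{lem:holderdecay} will yield H\"older decay of $|w|$, while \Cref{lem:laplacianL1} will control $\int|\lap w|$.

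\textbf{Execution.} By \Cref{rem:wlapw}, $w\lap w\geq 0$; by \Cref{lem:ellipticestimates}, $\eta:=\sup_{B_{3/2}}|w|\leq CM$. I split into two regimes. If $\tau \leq C_0 M$ for a constant $C_0=C_0(n,\lambda)$ chosen below, a standard cutoff argument (together with the explicit form of $\lap\psi_\lambda$) gives $\int_{B_1}|\lap w|\leq C(\tau+M)$, hence
\[
\int_{B_1}|w\lap w|\leq \eta\int_{B_1}|\lap w|\leq CM\cdot C(\tau+M)\leq CM^2,
\]
and the desired estimate follows because $\tau^{-\alpha/\lambda}M^{\alpha/\lambda}\geq C_0^{-\alpha/\lambda}$. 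Otherwise $\tau > C_0 M$, and I choose $\delta:=(KM/\tau)^{1/\lambda}$ with $K=K(n,\lambda)$ large enough that $\tau\delta^\lambda=KM\geq C_{\textup{barr}}\eta$ (so the barriers apply), and $C_0$ large enough that $\delta\leq 1/20$. The geometric observation above then yields
\[
\int_{B_1}|w\lap w|\leq \sup_{\mathcal N_{2\delta}\cap B_1}|w|\cdot\int_{\mathcal C_\delta\cap B_1}|\lap w|.
\]
The second factor is at most $C(\tau\delta^\lambda+M)\leq C(K+1)M$ by \Cref{lem:laplacianL1}; the first factor is at most $C\delta^\alpha M$ by a scaled version of \Cref{lem:holderdecay} applied to $w$, using $w\lap w\geq 0$ and the vanishing of $w$ on $\{x_{n-1}<-\delta,x_n=0\}\cap B_2$ provided by the barrier. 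Multiplying, $\int_{B_1}|w\lap w|\leq C\delta^\alpha M^2 = C\tau^{-\alpha/\lambda}M^{2+\alpha/\lambda}$.

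\textbf{Main difficulty.} The delicate point is choosing $\delta$ to balance two competing requirements: small enough to apply \Cref{lem:barrier}/\Cref{lem:barrierodd} and \Cref{lem:holderdecay}, yet large enough that $\tau\delta^\lambda\gtrsim M$ so \Cref{lem:laplacianL1} contributes only $O(M)$ rather than $O(\tau)$. The choice $\delta\sim(M/\tau)^{1/\lambda}$ precisely threads this needle and produces the nonlinear exponent $\alpha/\lambda$ in the final bound. A secondary subtlety is that, for $\lambda\in\tfrac{3}{2}+2\N$, the asymmetric support of $\lap\psi_\lambda$ (on $\{x_{n-1}\leq 0\}$) aligns exactly with the asymmetric zero set of $\psi_\lambda|_{\{x_n=0\}}$, so on the side where $\lap w$ may be singular one really has $w\equiv 0$; the odd-integer case is somewhat simpler since $\psi_\lambda\equiv 0$ on the entire thin space there.
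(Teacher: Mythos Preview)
Your proposal is correct and follows essentially the same approach as the paper: localize $w\lap w$ to the strip $\mathcal C_\delta$ via the barrier lemmas, then bound $\sup|w|$ there by \Cref{lem:holderdecay} and $\int|\lap w|$ by \Cref{lem:laplacianL1}, with the same choice $\delta\sim (M/\tau)^{1/\lambda}$. The only cosmetic differences are that the paper first scales to $\tau=1$ (so your regime split ``$\tau\le C_0M$'' versus ``$\tau>C_0M$'' becomes ``$\|w\|$ large'' versus ``$\|w\|$ small''), and in the easy regime the paper bounds $\int_{B_1}|w\lap w|$ directly by $\tfrac12\int w^2\lap\xi\le CM^2$ using $w\lap w\ge0$, rather than going through $\sup|w|\cdot\int|\lap w|$.
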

        \begin{proof}[Proof in the case $\lambda\in \frac 3 2 + 2\N$]
            Scaling $w$ and rotating the coordinates, we can prove the statement in the case $\tau=1$ and $S=\bm 1$. Since $w\lap w\geq 0$ by Remark \ref{rem:wlapw} we can apply \Cref{lem:ellipticestimates,lem:barrier} to $w$ and find
            \begin{align*}
	            B_1 \cap \{x_{n-1}<-\delta,x_n=0\}&\subset \{w=0\}\text{ and }\\
	            B_{1}\cap \{x_{n-1}>\delta,x_n=0\}&\subset \{\lap w=0\},
            \end{align*}
            for $\delta:=C(n,\lambda)(\|w\|_{L^2(B_2\setminus B_1)})^{1/\lambda}$. In particular for $\|w\|_{L^2(B_2\setminus B_1)}\leq \eta(n,\lambda)$ small we have $\delta<1/20$, hence applying \Cref{lem:holderdecay,lem:laplacianL1} to $w$ we find
            \begin{align*}
                \int_{B_1}|w\lap w|& = \int_{B_1\cap \mathcal C_\delta}|w\lap w|\leq \sup_{B_1\cap \mathcal C_\delta}|w|\int_{B_1\cap \mathcal C_\delta}|\lap w|\\
                & \leq C(n,\lambda) \delta^\alpha\|w\|_{L^2(B_2\setminus B_1)}\left( \delta^\lambda+\|w\|_{L^2(B_2\setminus B_1)}\right),
            \end{align*}
            and \eqref{eq:boundwlapw} follows by the definition of $\delta$. We conclude noticing that, when $\|w\|_{L^2(B_2\setminus B_1)}$ is large, then \eqref{eq:boundwlapw} is trivial since using a smooth cut-off $\mathcal X_{B_{1}}\leq \xi\leq \mathcal X_{B_{2}}$ we have
            \[
                \int_{B_1}|w\lap w| \le \int_{B_2}w\lap w \xi \le \frac 1 2 \int_{B_2}\lap (w^2) \xi = \frac 1 2 \int_{B_2} w^2 \lap\xi
                \le C\int_{B_2\setminus B_1} w^2,
            \]
            which implies that  \eqref{eq:boundwlapw} holds (for some $C= C(n,\lambda)$) if $\|w\|_{L^2(B_2\setminus B_1)}\geq \eta(n,\lambda)$.
        \end{proof}
        \begin{proof}[Proof in the case $\lambda\in 1 + 2\N$]
            The proof is identical to the case $\lambda$ half-odd-integer after we notice that, by \Cref{lem:ellipticestimates,lem:barrierodd}, also in this case we have
            \begin{equation*}
                B_1\cap \supp w\lap w\subset \mathcal C_\delta,\text{ for } \delta:=C(n,\lambda)(\|w\|_{L^2(B_2\setminus B_1)})^{1/\lambda}.\qedhere
            \end{equation*}
        \end{proof}
\section{Blow-ups are 2D up to dimension reduction}\label{sec:2Dblowups}
    \subsection{}For convenience we gather in this section some facts that are already known, and we state them in a convenient fashion.
    
    For each $\lambda\geq 0$, $u$ solving \eqref{eq:thinobstacle}, $\eps>0$ and $r\in(0,1)$ consider the following statement:
        \begin{equation}\label{propertyP}
            \begin{split}
            \mathcal P_\eps^\lambda(r,u):=&\text{``there exists a linear space $L\subset\{x_n=0\}$ such that}\\
            &\quad \text{$\dim L\leq n-3$ and $\Lambda_\lambda(u)\cap B_{r/2} \subset L+B_{\eps r}$''.}
            \end{split}
        \end{equation}
        We define now the set of bad points of frequency $\lambda$:
        \begin{equation}\label{eq:deflLambdastar}
            \Lambda^*_\lambda(u):=\bigcap_{\eps>0}\left\{x_\circ\in \Lambda_\lambda(u) : \limsup_k\frac{\#\{0\leq j\leq k : \mathcal P_\eps^\lambda(2^{-j},u(x_\circ+\cdot))\text{ is true}\}}{k}=1\right\}.
        \end{equation}
        We also define the set of points where the blow up cannot be 2D (cf. \Cref{lem:2Dsolutionsclassification}),
        \begin{equation}
            \Lambda_{\text{\upshape{other}}}(u):=\left\{x_\circ\in\Lambda(u) : \phi(0+,u(x_\circ+\cdot))\not\in \big(\tfrac32+2\N\big)\cup\N\right\}.
        \end{equation}
        In this section we prove three facts:
        \begin{itemize}
            \item if $x_\circ\in\Lambda_\lambda(u)$ and $\mathcal P_\eps^\lambda(r_k,u(x_\circ+\cdot))$ fails along a sequence of scales $r_k\downarrow 0$, then there exist a 2D blow-up of $u$ at $x_\circ$;
            \item points at which $\mathcal P_\eps^\lambda(r,u)$ is ``asymptotically always'' true (i.e., points in $\Lambda^*_\lambda(u)$), have Hausdorff dimension at most $n-3$.
            \item $\Lambda_{\text{\upshape{other}}}(u)$ has Hausdorff dimension at most $n-3$.
        \end{itemize}
    \subsection{2D blow-ups}
        For each $r\in(0,2)$ and $\lambda\in\{\frac32,\frac72,\ldots\}\cup\{1,2,3,\ldots\}$, we define the quantity
        \begin{equation}\label{eq:defofAfunctional}
            A_\lambda(r,u):=\min_{\tau\in\R,S\in O(n-1)}\|(u-\tau\psi_\lambda\circ S)(r\cdot)\|_{L^2(\de B_{1})},
        \end{equation}
        which is increasing in $r$ by \Cref{lem:Hlambdau}. We remark that $A(r,u)=A(1,u_r)$.
        
        \begin{lemma}\label{lem:closeness by frequency}
            Let $\lambda\in\{\frac 3 2,\frac 7 2 ,\frac {11}2,\ldots\}\cup\{1,2,3,4,\ldots\}$. For all $\eps,\eta\in(0,1/2)$ there exist $\delta_\circ>0$ and $M_\circ\geq 1$ with the following property. If $u$ solves \eqref{eq:thinobstacle} with $0\in\Lambda_\lambda(u)$, and  for some $r\in(0,1/M_\circ)$ we have that
            \begin{equation}\label{eq:closeness by freq assumption}
                \phi(M_\circ r,u)\leq \lambda+\delta_\circ \text{ and }\mathcal P_\eps^\lambda(r,u)\text{ fails,}
            \end{equation}
            then $A(r,u)\leq \eta \|u_{r}\|_{L^2(\de B_{1})}$.
        \end{lemma}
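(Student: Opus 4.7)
I would argue by contradiction and compactness. Suppose the claim fails for some $\eps_0,\eta_0>0$: then there exist sequences $\delta_k\downarrow 0$, $M_k\uparrow\infty$, solutions $u_k$ of \eqref{eq:thinobstacle} with $0\in\Lambda_\lambda(u_k)$, and scales $r_k\in(0,1/M_k)$ satisfying $\phi(M_kr_k,u_k)\le\lambda+\delta_k$, with $\mathcal P_{\eps_0}^\lambda(r_k,u_k)$ failing, and $A(r_k,u_k)>\eta_0\|u_k(r_k\cdot)\|_{L^2(\de B_1)}$. Normalizing $v_k(x):=u_k(r_kx)/\|u_k(r_k\cdot)\|_{L^2(\de B_1)}$, \Cref{lem:Hlambdau} and the frequency bound yield uniform $L^2$-bounds of $v_k$ on every fixed ball, so standard interior regularity for the thin obstacle problem gives a subsequence converging in $C^0_\loc\cap H^1_\loc(\R^n)$ to a nontrivial global solution $v_\infty$ of \eqref{eq:thinobstacle} with $A(1,v_\infty)\ge\eta_0$. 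Since $\phi(\cdot,v_k)\in[\lambda,\lambda+\delta_k]$ on $(0,M_k)$, in the limit $\phi(r,v_\infty)\equiv\lambda$, and the equality case in \eqref{whgiohwiohw} forces $v_\infty$ to be $\lambda$-homogeneous about $0$.

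The key step is to extract from the failure of $\mathcal P_{\eps_0}^\lambda(1,v_k)$ many quantitatively independent contact points. By a greedy selection (at each stage, extend the subspace spanned by the already chosen points up to dimension $n-3$, then use the failure to find a new point outside its $\eps_0$-tube) I obtain $x_k^{(1)},\dots,x_k^{(n-2)}\in\Lambda_\lambda(v_k)\cap B_{1/2}$ with $\mathrm{dist}(x_k^{(j)},\Span(x_k^{(1)},\dots,x_k^{(j-1)}))\ge c(n,\eps_0)>0$. Up to a further subsequence, $x_k^{(j)}\to x_\infty^{(j)}\in\{x_n=0\}$, with the same quantitative independence and $v_\infty(x_\infty^{(j)})=0$. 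I claim $x_\infty^{(j)}\in\Lambda_\lambda(v_\infty)$: for the lower bound, monotonicity and $0\in\Lambda_\lambda(v_k)$ give $\phi(r,v_k(x_k^{(j)}+\cdot))\ge\lambda$ for every $r>0$; passing to the limit at $r>0$ fixed yields $\phi(r,v_\infty(x_\infty^{(j)}+\cdot))\ge\lambda$, so $\phi(0+,\cdot)\ge\lambda$. For the matching upper bound, $\lambda$-homogeneity of $v_\infty$ about $0$ and the scaling of the integrals make $\lim_{r\to\infty}\phi(r,v_\infty(x_\infty^{(j)}+\cdot))=\lambda$, so Almgren monotonicity forces $\phi(0+,\cdot)\le\lambda$. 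Equality for all $r$ shows $v_\infty$ is $\lambda$-homogeneous about each $x_\infty^{(j)}$; a direct calculation shows that a function $\lambda$-homogeneous about both $0$ and $x\ne 0$ is translation invariant along $\R x$.

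Combining the $n-2$ invariance directions (which span an $(n-2)$-dimensional subspace by the quantitative independence), $v_\infty$ depends only on $(x_{n-1},x_n)$ up to a rotation, hence is a 2D solution. By \Cref{lem:2Dsolutionsclassification}, $v_\infty=\tau\psi_\lambda\circ S$ for some $\tau\ge 0$ and $S\in O(n-1)$, so $A(1,v_\infty)=0$, contradicting $A(1,v_\infty)\ge\eta_0>0$. The main technical obstacle I expect is making the frequency-convergence step at shifted base points $x_k^{(j)}\to x_\infty^{(j)}$ fully rigorous; this should follow from \Cref{lem:frequencycomparison} together with the uniform estimates and the $H^1_\loc$ convergence, but one has to ensure that shifts are handled at scales small compared to $|x_\infty^{(j)}|$. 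The greedy selection and the ``$\lambda$-homogeneity about two points implies translation invariance'' calculation are elementary but would be isolated as short auxiliary lemmas.
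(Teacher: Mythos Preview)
Your proposal is correct and follows essentially the same contradiction--compactness strategy as the paper: rescale and normalize, pass to a $\lambda$-homogeneous global limit $v_\infty$, extract $(n-2)$ quantitatively independent points in $\Lambda_\lambda$ from the failure of $\mathcal P_{\eps_0}^\lambda$, show $v_\infty$ is $\lambda$-homogeneous about each of them, and conclude $v_\infty$ is 2D, contradicting $A(1,v_\infty)\ge\eta_0$. The only notable variation is in the upper frequency bound at the shifted points: the paper applies \Cref{lem:frequencycomparison} at the large scale $M_k/4$ \emph{before} passing to the limit to pinch $\phi(\rho,v_k(z_k+\cdot))$ between $\lambda$ and $(\lambda+\delta_k)(1+C/M_k)$, whereas you obtain the upper bound \emph{after} the limit via $\phi(\infty,v_\infty(x_\infty^{(j)}+\cdot))=\phi(\infty,v_\infty)=\lambda$ from homogeneity about $0$; both are valid and equally short. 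One small slip: your lower bound $\phi(r,v_k(x_k^{(j)}+\cdot))\ge\lambda$ follows from $x_k^{(j)}\in\Lambda_\lambda(v_k)$ (which you built in), not from ``$0\in\Lambda_\lambda(v_k)$'' as written.
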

        \begin{proof}
            Assume by contradiction there exist some positive $\eta_\circ,\eps_\circ$ such that the statement does not hold. Rescaling, this means that there are functions $v_k:=u_k(r_k\cdot)/\|u_k(r_k\cdot)\|_{L^2(\de B_1)}$ which satisfy
            \begin{equation*}
                \lambda=\phi(0+,v_k)\leq \phi(M_k,v_k)\leq \lambda+\delta_k,\ A(1,v_k) >  \eta_\circ\text{ and }\mathcal P_{\eps_\circ}^\lambda(1,v_k)\text{ fails,}
            \end{equation*}
            for some sequences $M_k\uparrow\infty$ and $\delta_k\downarrow 0$. If $z_k\in\Lambda_\lambda(v_k)$, then applying \Cref{lem:frequencycomparison} to $v_k,z_k$ and $M_k/4$ we find that, for each $\rho\in(0,M_k/4)$, it holds
            \begin{align}\label{eq:closeness by freq local}
                \lambda\leq \phi(\rho,v_k(z_k+\cdot))&\leq \phi(M_k/4,v_k(z_k+\cdot)) \\
                & \leq \phi(M_k/4,v_k)\Big(1+\frac{C(n,\lambda)}{M_k}\Big) \leq (\lambda+\delta_k)\Big(1+\frac{C(n,\lambda)}{M_k}\Big).\notag
            \end{align}
            
            By the $C^{1,\alpha}$ regularity of solutions of \eqref{eq:thinobstacle} (see for instance \cite{AC}) and Ascoli-Arzel\`a, we obtain that  $v_k\to v_*$ in $C^1_\loc(\{x_n\ge 0\})$ (up to subsequence), where $v_*$ is a $\lambda$-homogeneous solution of \eqref{eq:thinobstacle}. Now, since $\mathcal P_{\eps_\circ}^\lambda(1,v_k)$ fails for all $k$, we can find $(n-2)$ linearly independent points $z_1,\ldots,z_{n-2}$ such that
            \begin{equation*}
                z_j\in \Lambda(v_*)\cap \{x_n=0\} \cap (\overline B_1\setminus B_{\eps_\circ})\text{ for all }j\in\{1,\ldots,n-2\}.
            \end{equation*}
            Passing to the limit \eqref{eq:closeness by freq local} one gets that in fact $z_j\in\Lambda_\lambda(v_*)$ for all $j$'s. Now, since 
            \[\lambda = \phi(0+,v_*(z_j+ \,\cdot\,)) \le \phi(\infty,v_*(z_j+ \,\cdot\,)) = \phi(\infty,v_*) = \lambda\] 
            we obtain that $v_*$ is $\lambda$-homogeneous about the $n-2$ linearly independent points $z_j$ (and also about $0$ as shown above). Hence, $v_*$ must be translation invariant along $(n-2)$ directions, so using \Cref{lem:2Dsolutionsclassification} and the fact that $\|v_*\|_{L^2(\de B_1)}=1$, we get that up to a rotation $v_*=\psi_\lambda$, contradicting $A(1,v_*)\geq \eta_\circ>0$.
        \end{proof}
        \begin{lemma}\label{lem:lambdaother}
            Let $u$ solve \eqref{eq:thinobstacle} with $0\in\Lambda_{\text{\upshape{other}}}(u)$ and $\eps>0$. Then there exists $\rho_0>0$ with the following property. For all $r\in(0,\rho_0)$ there exists a linear space $L\subset \{x_n=0\}$ such that $\dim L\leq n-3$ and 
            \begin{equation*}
                \big\{x\in B_r : |\phi(0+,u)-\phi(0+,u(x + \cdot))|\leq \rho_0\big\}\subset L + B_{\eps r}.
            \end{equation*}
        \end{lemma}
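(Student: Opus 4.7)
Set $\lambda := \phi(0+,u)$, which by hypothesis lies outside $\bigl(\tfrac32+2\N\bigr)\cup\N$. The plan is to argue by contradiction via a blow-up/compactness argument, reducing to \Cref{lem:2Dsolutionsclassification}.

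Suppose the conclusion fails for some $\eps>0$. Then one finds sequences $\rho_k\downarrow 0$ and $r_k\in(0,\rho_k)$ such that the set
\[
E_k := \bigl\{x\in B_{r_k}\,:\,|\phi(0+,u(x+\cdot))-\lambda|\le \rho_k\bigr\}
\]
is not contained in $L+B_{\eps r_k}$ for any affine $L\subset\{x_n=0\}$ of dimension $\le n-3$. A standard greedy extraction argument then produces, for each $k$, points $z^k_1,\dots,z^k_{n-2}\in E_k$ such that the rescaled vectors $\tilde z^k_j := z^k_j/r_k\in B_1$ satisfy $\mathrm{dist}\bigl(\tilde z^k_{j+1},\mathrm{span}(\tilde z^k_1,\dots,\tilde z^k_j)\bigr)\ge \eps$ for $j=0,\dots,n-3$; in particular the $\tilde z^k_j$ are quantitatively linearly independent, uniformly in $k$.

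Now rescale by defining $v_k(x):= u(r_k x)/\|u(r_k\cdot)\|_{L^2(\partial B_1)}$. Since $0\in\Lambda_\lambda(u)$, \Cref{lem:Hlambdau} gives uniform $L^2(B_R)$ bounds for every $R$, and by the standard $C^{1,\alpha}$ regularity of solutions to \eqref{eq:thinobstacle} we may pass to a subsequence along which $v_k\to v_*$ in $C^1_\loc(\{x_n\ge 0\})$, where $v_*$ is a $\lambda$-homogeneous solution of \eqref{eq:thinobstacle}. Extracting again, $\tilde z^k_j\to \tilde z^*_j\in\overline B_1$ with $\tilde z^*_1,\dots,\tilde z^*_{n-2}$ still linearly independent.

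The key step is to show that $v_*$ has frequency exactly $\lambda$ at each $\tilde z^*_j$. For the upper bound, $\lambda$-homogeneity of $v_*$ yields $\phi(\infty,v_*(\tilde z^*_j+\cdot))=\lambda$, so by monotonicity $\phi(0+,v_*(\tilde z^*_j+\cdot))\le\lambda$. For the lower bound, fix $s>0$; since $\tilde z^k_j\to \tilde z^*_j$ and $v_k\to v_*$ strongly enough to pass to the limit in the functionals $D$ and $H$ at radius $s$ (this is routine, using the uniform $C^{1,\alpha}$ bounds), one has $\phi(s,v_*(\tilde z^*_j+\cdot))=\lim_k \phi(s,v_k(\tilde z^k_j+\cdot))$. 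But by monotonicity $\phi(s,v_k(\tilde z^k_j+\cdot))\ge \phi(0+,v_k(\tilde z^k_j+\cdot))=\phi(0+,u(z^k_j+\cdot))\ge \lambda-\rho_k$, so letting $k\to\infty$ and then $s\downarrow 0$ gives $\phi(0+,v_*(\tilde z^*_j+\cdot))\ge \lambda$. Thus $v_*$ is $\lambda$-homogeneous about each $\tilde z^*_j$, and also about $0$. Having the homogeneity center at $n-1$ points spanning an $(n-2)$-dimensional affine flat inside $\{x_n=0\}$ forces $v_*$ to be translation-invariant along $n-2$ independent directions in the thin space, so $v_*$ is a $\lambda$-homogeneous 2D solution. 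By \Cref{lem:2Dsolutionsclassification}, $\lambda\in\N\cup(\tfrac32+2\N)$, contradicting $0\in\Lambda_{\mathrm{other}}(u)$.

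The main obstacle, technically speaking, is the passage to the limit for the frequency at moving base-points $\tilde z^k_j\to\tilde z^*_j$: one must justify that $\phi(s,v_k(\tilde z^k_j+\cdot))\to \phi(s,v_*(\tilde z^*_j+\cdot))$ for fixed $s>0$, which requires $H(s,v_k(\tilde z^k_j+\cdot))$ to stay bounded away from zero. This is where \Cref{lem:Hlambdau}, applied to $v_k$ at $\tilde z^k_j\in\Lambda(v_k)$, is essential: it provides the needed lower bound $H(s,v_k(\tilde z^k_j+\cdot))\gtrsim s^{\lambda+\rho_k}H(1,v_k(\tilde z^k_j+\cdot))$ and prevents degeneration as $k\to\infty$.
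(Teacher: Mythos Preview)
Your plan is correct and follows essentially the same blow-up/compactness route as the paper's sketch (which simply points to the proof of \Cref{lem:closeness by frequency}): contradict, rescale to $v_k$, extract $n-2$ quantitatively independent contact points, pass to a $\lambda$-homogeneous limit $v_*$ that is also $\lambda$-homogeneous about each limit point, hence 2D, contradicting $\lambda\notin(\tfrac32+2\N)\cup\N$ via \Cref{lem:2Dsolutionsclassification}.

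One small remark on your last paragraph. Your proposed fix for the positivity of the denominator, namely invoking \Cref{lem:Hlambdau} to get $H(s,v_k(\tilde z^k_j+\cdot))\gtrsim s^{\lambda+\rho_k}H(1,v_k(\tilde z^k_j+\cdot))$, would require knowing $\phi(1,v_k(\tilde z^k_j+\cdot))\le \lambda+\rho_k$, which you have not established (you only control the frequency at $0^+$). The paper handles exactly this by using \Cref{lem:frequencycomparison} to transfer the frequency bound from the origin to $\tilde z^k_j$ at a \emph{macroscopic} scale; see \eqref{eq:closeness by freq local}. Alternatively, and more simply in your setup, you can bypass this entirely: since $v_*\not\equiv 0$ is even and harmonic in $\{x_n>0\}$, unique continuation gives $H(s,v_*(\tilde z^*_j+\cdot))>0$ for every $s>0$, and then $C^1_\loc(\{x_n\ge 0\})$ convergence (plus $\tilde z^k_j\to\tilde z^*_j$) yields $\phi(s,v_k(\tilde z^k_j+\cdot))\to\phi(s,v_*(\tilde z^*_j+\cdot))$ directly, with no lower bound on $H$ along the sequence needed.
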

        \begin{proof}[Sketch of the proof.] We argue by contradiction exactly as in \Cref{lem:closeness by frequency}. We end up with a nontrivial solution of \cref{eq:thinobstacle} $v_*$ which is translation invariant along $n-2$ directions and $\lambda$ homogeneous for $\lambda:=\phi(0+,u)$. Since nontrivial 2D solutions of \eqref{eq:thinobstacle} must have homogeneities in $\big(\frac32+2\N\big) \cup \N$ this is a contradiction. See \cite{FS} for more details.
        \end{proof}
%        \begin{remark}
%            We can repeat the proof \Cref{lem:closeness by frequency} for any $\lambda\geq 0$, and obtain in the limit a $\lambda$-homogeneous solution of \eqref{eq:thinobstacle} in 2D. Since such solutions can exist only for $\lambda\in\{2,3,\ldots\}\cup\{\frac 32,\frac72,\ldots\}$ and are classified, an analogous version of \Cref{lem:closeness by frequency} holds for $\lambda\in\{2,3,\ldots\}$ and $\psi_\lambda$ as defined in \eqref{eq:psilambdadef}. Assume now $\lambda\not\in\{2,3,\ldots\}\cup\{\frac 32,\frac72,\ldots\}$. Then the proof of \Cref{lem:closeness by frequency} shows that if $\lambda\leq\phi(0+,u)\leq\phi(1/2,u)\leq \lambda+\delta_\circ$ then $\mathcal P_\eps^\lambda(r,u)$ holds true for all $r\in(0,r_\circ)$. A Geometric Measure Theory argument then gives $\dim_{\mathcal{H}}\Lambda_\lambda(u)\leq n-3$ for such $\lambda$'s. This was first proved in ...%TODO:cite
%        \end{remark}
    \subsection{Dimension reduction}
        We prove the following proposition.
        \begin{proposition}\label{prop:dimensionreduction}
             Let $\lambda\in\{\frac 3 2,\frac 7 2 ,\frac {11}2,\ldots\}\cup\{1,2,3,\ldots\}$ and $u$ solve \eqref{eq:thinobstacle}, then $$\dim_{\mathcal H}\Lambda^*_\lambda(u)\leq n-3.$$
        \end{proposition}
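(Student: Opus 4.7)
The plan is a Federer-style dimension reduction via an iterated covering argument. I fix $s > n-3$ and choose $\eps_0 > 0$ small enough that
\[\alpha := C_n\cdot 2^s\cdot \eps_0^{s-(n-3)} < 1,\]
where $C_n$ is a dimensional constant bounding the minimal number of balls of radius $2\eps_0 r$ needed to cover the intersection of $B_{r/2}$ with the $\eps_0 r$-tubular neighborhood of an arbitrary $(n-3)$-dimensional affine subspace of $\{x_n=0\}$. Since by \eqref{eq:deflLambdastar} the inclusion $\Lambda^*_\lambda(u) \subset F_{\eps_0}$ holds for
\[ F_{\eps_0} := \Big\{x_\circ\in\Lambda_\lambda(u) : \limsup_k \tfrac{1}{k}\#\{0\le j\le k : \mathcal P^\lambda_{\eps_0}(2^{-j},u(x_\circ+\cdot))\text{ holds}\}=1\Big\},\]
it will suffice to prove $\mathcal H^s(F_{\eps_0})=0$; letting $s \downarrow n-3$ then delivers $\dim_{\mathcal H}\Lambda^*_\lambda(u) \le n-3$.

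The technical heart is an inductive refinement lemma: for every $x_\circ \in F_{\eps_0}$ and every $r_0$ at which $\mathcal P^\lambda_{\eps_0}(r_0,u(x_\circ+\cdot))$ holds, the set $F_{\eps_0}\cap B_{r_0/2}(x_\circ)$ admits, for every $k\ge 1$, a cover by balls of diameter $\le 2(2\eps_0)^k r_0$ of total $s$-content at most $\alpha^k r_0^s$. The base case $k=1$ is immediate from the defining inclusion $\Lambda_\lambda(u)\cap B_{r_0/2}(x_\circ) \subset x_\circ + L + B_{\eps_0 r_0}$: one covers this tubular neighborhood by $N\le C_n\eps_0^{-(n-3)}$ balls of radius $2\eps_0 r_0$ centered on a maximal $\eps_0 r_0$-separated subset of $F_{\eps_0}\cap B_{r_0/2}(x_\circ)$, which produces total $s$-content $\le \alpha r_0^s$. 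The inductive step applies the hypothesis at each new center $y_i\in F_{\eps_0}$ at a suitably chosen good scale $r_i\le 2\eps_0 r_0$, which multiplies both the diameters by $2\eps_0$ and the $s$-content by $\alpha$. Taking $k\to\infty$ forces $\mathcal H^s(F_{\eps_0}\cap B_{r_0/2}(x_\circ)) = 0$; covering $F_{\eps_0}$ by such good pairs via a Vitali/Lindel\"of argument yields $\mathcal H^s(F_{\eps_0}) = 0$.

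The main obstacle I anticipate is the scale-matching in the inductive step: the new center $y_i$ need not admit a good scale exactly equal to the natural target $2\eps_0 r_0$, so the refined ball $B_{r_i/2}(y_i)$ could in principle miss points of the Voronoi cell of $y_i$ lying outside $B_{r_i/2}(y_i)$. This is where I expect to need the full strength of the $\limsup=1$ hypothesis (as opposed to a weaker ``arbitrarily small good scales'' condition): along the subsequence of $K$'s realizing the $\limsup$, the bad dyadic scales in $[0,K]$ occupy an arbitrarily small fraction of the interval, which allows one to pick an initial scale $r_0 = 2^{-K}$ at which each $y_i$ admits a good scale inside a tight window around $2\eps_0 r_0$, keeping the Voronoi cells inside the refined sub-balls. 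The analogous dimension-reduction step is carried out in \cite{FS} for the classical obstacle problem, and I expect the combinatorial bookkeeping to adapt to the present setting without essential modification.
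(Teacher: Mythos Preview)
The paper's own proof is a one-line application of the black-box GMT lemma \cite[Lemma~12.6]{FRS2} (restated here as \Cref{lem:dimredauxiliary}): one simply observes that the definition of $\Lambda^*_\lambda(u)$ forces $\limsup_\ell N^\eps(\Lambda^*_\lambda(u),x_\circ,\ell)/\ell = 1$ at every point and for every $\eps>0$, so the lemma yields $\dim_{\mathcal H}\Lambda^*_\lambda(u)\le n-3+\alpha(n,\eps,\omega)$ with $\alpha\to 0$ as $\eps,\omega\to 0$. Your proposal is essentially an attempt to re-prove that lemma via an iterated covering, which is indeed the right general framework.

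However, your proposed resolution of the scale-matching obstacle contains a genuine gap. You write that choosing $K$ to realize the $\limsup$ at $x_\circ$ ``allows one to pick an initial scale $r_0=2^{-K}$ at which each $y_i$ admits a good scale inside a tight window around $2\eps_0 r_0$.'' But the smallness of the bad-scale density in $[0,K]$ is a statement about the center $x_\circ$, while the property $\mathcal P^\lambda_{\eps_0}(r,u(y_i+\cdot))$ that you need in the inductive step depends on the child center $y_i$; the $\limsup=1$ condition at $y_i$ is realized along its \emph{own} subsequence, entirely unrelated to $K$. In fact, the hypothesis $\limsup_k N(k)/k = 1$ at a single point does not even preclude arbitrarily long runs of consecutive bad scales there, so no choice of $r_0$ can force a good scale for $y_i$ in a prescribed window. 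Your appeal to \cite{FS} does not close this gap: the classical Federer reduction there operates under the stronger hypothesis that flatness holds at \emph{all} small scales, not merely at asymptotic density one along a subsequence.

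The proof in \cite{FRS2} does not try to locate good scales in tight windows. Instead it tolerates a loss: for fixed $\eps$ and a bad-scale budget $\omega$ one obtains only $\dim_{\mathcal H}\le n-3+\alpha(\eps,\omega)$, and the sharp bound $n-3$ comes from letting $\eps,\omega\to 0$, exploiting that $\Lambda^*_\lambda$ satisfies the hypothesis for \emph{every} $\eps$. If you want a self-contained argument, you should restructure the induction to allow ``bad'' refinement steps (trivial dyadic subdivision) alongside ``good'' ones, and bound the total loss in $s$-content by the bad-scale budget rather than trying to avoid bad scales altogether.
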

        The proof is a direct consequence of a more general result, we introduce some notation. For all $E\subset\R^n$, $x\in\R^n$ and $\ell\in\N$ define the number
        \begin{align*}
            N^\eps(E,x,\ell):=\#\big\{j\in\{0,\ldots,\ell\}&: \text{exists a linear space $L$, with $\dim L\leq n-3$,} \\
            &\text{ such that }E\cap B_{2^{-j}}(x)\subset x+L+B_{\eps2^{-j}}\big\}.
        \end{align*}
        Then we have the following lemma
        \begin{lemma}[{\cite[Lemma 12.6]{FRS2}}]\label{lem:dimredauxiliary}
            Let $E\subset \R^n$. Given $\eps,\omega>0$ small assume that
            \begin{equation*}
                \limsup_{\ell\to\infty}\frac{N^\eps(E,x,\ell)}{\ell}\geq 1-\omega\text{ for all }x\in E.
            \end{equation*}
            Then $\dim_{\mathcal H}E\leq n-3+\alpha$, where $\alpha(n,\eps,\omega)\to 0$ as $(\eps,\omega)\to(0,0)$.
        \end{lemma}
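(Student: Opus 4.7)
The strategy is to convert the pointwise $\eps$-flatness assumption into a covering-number bound at all small scales, which yields the Hausdorff-dimension estimate. Write $G(x) := \{j\ge 0 : \exists L \text{ linear, } \dim L\le n-3, \ E\cap B_{2^{-j}}(x)\subset x+L+B_{\eps 2^{-j}}\}$ and $m_\eps := \lceil\log_2(1/\eps)\rceil$. First I would reduce to bounded $E\subset B_{1/2}(0)$. Fix $\omega'\in(\omega,2\omega)$ and set
\[
E_m := \{x\in E : |G(x)\cap\{0,\dots,\ell\}|\ge(1-\omega')\ell \text{ for some }\ell\ge m\}.
\]
The $\limsup$ hypothesis gives $E\subset E_m$ for every $m$, so it suffices to bound $\dim_\mathcal H E_m$ uniformly in $m$.

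The core step is a single-point covering: for each $x\in E_m$ with good depth $\ell=\ell(x)\ge m$, produce a cover of $E\cap B_{1/2}(x)$ by at most $2^{\ell(n-3+\alpha_0)}$ balls of radius $2^{-\ell}$, where $\alpha_0 = C(n)(\omega'+m_\eps^{-1})$. Iterate through dyadic scales $j=0,1,\ldots,\ell$: at a good scale $j\in G(x)$, the inclusion $E\cap B_{2^{-j}}(x)\subset x+L_j+B_{\eps 2^{-j}}$ yields a cover of $E\cap B_{2^{-j}}(x)$ by $\le C(n)\eps^{-(n-3)}$ balls of radius $\eps 2^{-j}\le 2^{-(j+m_\eps)}$; treat this as one \emph{good step} advancing $m_\eps$ dyadic scales with multiplicative factor $C(n)\eps^{-(n-3)}$. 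At a bad scale, refine trivially by $\le C(n)$ balls of half the radius, advancing one scale. With $g$ good and $b$ bad steps, $gm_\eps+b=\ell$, and the hypothesis gives $b\le\omega'\ell$, so the total number of balls is at most
\[
(C\eps^{-(n-3)})^{\ell/m_\eps}\cdot C^{\omega'\ell}\le 2^{\ell(n-3+\alpha_0)}
\]
(using $\log_2(1/\eps)=m_\eps+O(1)$ in the final step).

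Globally, apply a Vitali-type covering to the family $\{B_{1/2}(x): x\in E_m\}$, extracting finitely many $x_i\in E_m$ with $E_m\subset\bigcup_i B_{1/2}(x_i)$. Refining each $B_{1/2}(x_i)$ by the single-point cover at depth $m$, the total sum of $s$-th powers of radii is bounded by $\#\{x_i\}\cdot 2^{m(n-3+\alpha_0-s)}$. For any $s>n-3+\alpha_0$, letting $m\to\infty$ shows $\mathcal H^s(E_m)=0$ and hence $\mathcal H^s(E)=0$. Since $\alpha_0\to 0$ as $(\eps,\omega)\to(0,0)$, this gives the stated dimension bound.

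The main obstacle is the single-point iterative construction: the $\eps$-flatness at scale $2^{-j}$ only constrains $E$ inside the ball $B_{2^{-j}}(x)$ centered at the fixed $x$, so one cannot directly iterate the cover on sub-balls that do not contain $x$. The natural fix is an annular decomposition $B_{1/2}(x)=\bigsqcup_j A_j$ with $A_j:=B_{2^{-j}}(x)\setminus B_{2^{-(j+1)}}(x)$: within the $j$-th annulus one applies the flatness at scale $2^{-j}$ (if $j\in G(x)$) to cover $E\cap A_j$ by $\le C(n)\eps^{-(n-3)}$ balls of radius $\eps 2^{-j}$, and contributions across annuli combine additively. Carefully counting good versus bad annuli and summing their contributions produces the claimed factor, and is the technical heart of the argument.
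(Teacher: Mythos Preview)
The paper does not prove this lemma; it is cited from \cite{FRS2} without proof, so there is no argument in the paper to compare against.

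On its own merits, your proposal has a genuine gap. You correctly identify the obstacle in the single-point iteration: flatness at scale $2^{-j}$ constrains $E$ only inside $B_{2^{-j}}(x)$, so one cannot refine sub-balls that do not contain $x$. But the annular fix does not repair this. The annular decomposition produces a cover of $E\cap B_{1/2}(x)$ by balls of \emph{varying} radii (radius $\eps 2^{-j}$ on good annuli, $2^{-j}$ on bad ones), and the resulting sum $\sum r_i^s$ is at most $C(s,\eps)\sum_{j\ge 1}2^{-js}$, a constant that does \emph{not} decay as $\ell\to\infty$. You therefore cannot extract from it a cover by $2^{\ell(n-3+\alpha_0)}$ balls of uniform radius $2^{-\ell}$: already on the outermost annulus $A_1$ the only information is that $E\cap A_1$ lies in an $(\eps/2)$-slab around an $(n-3)$-plane, and covering such a slab by balls of radius $2^{-\ell}$ costs of order $\eps^3\, 2^{n\ell}$ balls, far exceeding the budget once $\ell\gg m_\eps$. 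The multiplicative count $(C\eps^{-(n-3)})^{\ell/m_\eps}\cdot C^{\omega'\ell}$ you wrote belongs to the iterative picture, not to the additive annular one, and the two are not interchangeable. Consequently your global Vitali step, which rests on that single-point count, does not conclude.

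What is missing is that the flatness hypothesis must be invoked at \emph{every} new center produced by the refinement, not only at the initial point $x$: after covering $E\cap B_r(y)$ by smaller balls one recenters each nonempty ball at a point of $E$ and applies the hypothesis there at the new scale. The genuine difficulty --- and the content of the argument in \cite{FRS2} --- is then the bookkeeping along each branch of the resulting refinement tree, where the centers, and hence the sets of good scales, change from step to step.
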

        \begin{proof}[Proof of \Cref{prop:dimensionreduction}]
            By definition of $\Lambda^*_\lambda(u)$ we have, for all $\eps>0$,
            \begin{equation*}
                \limsup_{\ell\to\infty}\frac{N^\eps(\Lambda_\lambda^*(u),x_\circ,\ell)}{\ell} = 1\text{ for all }x_\circ\in \Lambda^*_\lambda(u).
            \end{equation*}
            Hence \Cref{lem:dimredauxiliary} applied to $E:=\Lambda^*_\lambda(u)$ gives $\dim_{\mathcal H}\Lambda^*_\lambda(u)\leq n-3+\alpha$, where $\alpha>0$ can be made arbitrarily small (by choosing $\eps$ accordingly small). This concludes the proof.
        \end{proof}
        We also record the following result
        \begin{proposition}\label{prop:dimredotherfreq}
            Let $u$ solve \eqref{eq:thinobstacle}, then $\dim_{\mathcal H}\Lambda_{\text{\upshape{other}}}(u)\leq n-3.$
        \end{proposition}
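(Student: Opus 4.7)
The plan is to apply the dimension reduction lemma (\Cref{lem:dimredauxiliary}) to $\Lambda_{\text{\upshape{other}}}(u)$. Since \Cref{lem:lambdaother} yields flatness only for those contact points whose frequency lies within a gauge $\rho_0$ of the reference point's frequency, we first decompose $\Lambda_{\text{\upshape{other}}}(u)$ into countably many pieces along which both the gauge $\rho_0$ and the frequency are effectively uniform; this reduces the verification of the hypothesis of \Cref{lem:dimredauxiliary} to a one-line invocation of \Cref{lem:lambdaother}.

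Fix $\eps>0$ and, for each $x_\circ\in\Lambda_{\text{\upshape{other}}}(u)\cap B_{1/2}$, let $\rho_0(x_\circ,\eps)>0$ denote the constant produced by \Cref{lem:lambdaother} applied to $u(x_\circ+\cdot)$. For $m,M\in\N$, set
\begin{equation*}
    E_m^M:=\{x_\circ\in\Lambda_{\text{\upshape{other}}}(u)\cap B_{1/2} : \rho_0(x_\circ,\eps)\geq 1/m,\ \phi(0+,u(x_\circ+\cdot))\leq M\},
\end{equation*}
and refine by frequency intervals of width $1/(2m)$:
\begin{equation*}
    E_m^{M,k}:=\left\{x_\circ\in E_m^M : \phi(0+,u(x_\circ+\cdot))\in\left[\tfrac{k}{2m},\tfrac{k+1}{2m}\right)\right\},\quad k\in\{0,\ldots,2mM\}.
\end{equation*}
The identity $\Lambda_{\text{\upshape{other}}}(u)\cap B_{1/2}=\bigcup_{m,M,k}E_m^{M,k}$ is a countable union.

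Now fix $m,M,k$ and any $x_\circ\in E_m^{M,k}$. For every $r\in(0,1/m)$, \Cref{lem:lambdaother} applied to $u(x_\circ+\cdot)$ (with the choice $\rho_0=1/m\leq\rho_0(x_\circ,\eps)$) furnishes a linear space $L\subset\{x_n=0\}$ of dimension at most $n-3$ such that
\begin{equation*}
    \big\{y\in B_r(x_\circ) : |\phi(0+,u(x_\circ+\cdot))-\phi(0+,u(y+\cdot))|\leq 1/m\big\}\subset x_\circ+L+B_{\eps r}.
\end{equation*}
Any $y\in E_m^{M,k}\cap B_r(x_\circ)$ has its frequency in the same interval of length $1/(2m)$ as $x_\circ$, so the left-hand side contains $E_m^{M,k}\cap B_r(x_\circ)$. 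Thus $E_m^{M,k}\cap B_r(x_\circ)\subset x_\circ+L+B_{\eps r}$ for all $r\in(0,1/m)$, and every dyadic scale $2^{-j}<1/m$ contributes to $N^\eps(E_m^{M,k},x_\circ,\ell)$, giving $\limsup_\ell N^\eps(E_m^{M,k},x_\circ,\ell)/\ell=1$ at every $x_\circ\in E_m^{M,k}$. \Cref{lem:dimredauxiliary} then yields $\dim_{\mathcal H}E_m^{M,k}\leq n-3+\alpha(\eps)$ with $\alpha(\eps)\downarrow 0$. Countable stability of Hausdorff dimension and sending $\eps\downarrow 0$ conclude $\dim_{\mathcal H}(\Lambda_{\text{\upshape{other}}}(u)\cap B_{1/2})\leq n-3$; covering by finitely many translates gives the global bound. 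The only subtle point is threading the pointwise nature of \Cref{lem:lambdaother}, in which $\rho_0$ depends on $x_\circ$, into the uniform hypothesis demanded by \Cref{lem:dimredauxiliary}, and this is precisely what the triple stratification $(m,M,k)$ achieves.
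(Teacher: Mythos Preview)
Your proof is correct, but it takes a different route from the paper. The paper invokes \Cref{lem:dimredauxiliaryotherfreq} (a GMT lemma from \cite{FRS1} that is tailor-made for sets stratified by an auxiliary function $f$), applying it directly with $E=\Lambda_{\text{other}}(u)$, $f(x)=\phi(0+,u(x+\cdot))$, and $\varrho=\rho_0$; the conclusion of \Cref{lem:lambdaother} is exactly the hypothesis of that lemma, so the proof is two lines. You instead use the more basic \Cref{lem:dimredauxiliary}, which does not build in the function $f$, and compensate by manually stratifying: first by a lower bound on the scale $\rho_0(x_\circ,\eps)$, then by a frequency window of width $1/(2m)$, so that on each piece $E_m^{M,k}$ the frequency gauge in \Cref{lem:lambdaother} is automatically met by every point of the piece. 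The argument is sound (in particular, shrinking $\rho_0$ to $1/m$ only shrinks both the range of radii and the frequency window, so the inclusion persists; and the $\eps$-dependence of the decomposition is harmless since the union is always all of $\Lambda_{\text{other}}\cap B_{1/2}$). Two minor remarks: the parameter $M$ is redundant, since one could simply let $k$ range over $\N$; and the $\alpha$ produced by \Cref{lem:dimredauxiliary} depends on both $\eps$ and $\omega$, so one should send $(\eps,\omega)\to(0,0)$ jointly rather than only $\eps\downarrow 0$. In effect, your stratification argument is a hands-on reproof of (the relevant special case of) \Cref{lem:dimredauxiliaryotherfreq} via \Cref{lem:dimredauxiliary}; the paper's route is shorter because it quotes the more powerful lemma directly.
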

        The proof relies on a semi-classical GMT Lemma
        \begin{lemma}[{\cite[Proposition 7.3]{FRS1}}]\label{lem:dimredauxiliaryotherfreq}
            Let $E\subset \R^n$, $0\leq m\leq n$ and $f\colon E\to \R$.
            Assume that, for any $\eps>0$ and $x\in E$ there exists $\varrho= \varrho(x,\eps)>0$ such that, for all $r\in (0, \varrho)$, we have
            $$
            E\cap {B_r(x)}\cap f^{-1}([f(x)-\varrho,f(x)+\varrho])\subset L+B_{\eps r},
            $$
            for some $m$-dimensional plane $L$ passing through $x$ (possibly depending on $r$). Then $\dim_\mathcal{H}(E)\leq m$.
        \end{lemma}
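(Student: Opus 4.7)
The plan is to reduce the statement to a standard self-similar covering iteration by first stratifying $E$ so that both the scale threshold $\varrho(x,\eps)$ and the value $f(x)$ become essentially constant along each piece. Concretely, for each $\eps>0$, $k\in\N$ and $j\in\mathbb{Z}$, set
\[
    E_k^\eps := \{x\in E : \varrho(x,\eps)\geq 1/k\},\qquad E_{k,\eps,j} := E_k^\eps \cap f^{-1}\bigl([j/k,(j+1)/k)\bigr).
\]
Since $E = \bigcup_{k,j} E_{k,\eps,j}$ is a countable union, it is enough to bound $\dim_{\mathcal H}(E_{k,\eps,j})$ for every triple.

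The point of the stratification is that for any two points $x,y\in E_{k,\eps,j}$ we automatically have $|f(y)-f(x)|<1/k\leq\varrho(x,\eps)$, and therefore $y$ lies in the slice $f^{-1}([f(x)-\varrho(x,\eps),f(x)+\varrho(x,\eps)])$. The conditional hypothesis of the lemma applied at $x$ is thereby upgraded to an \emph{unconditional} tube containment on the stratum: for every $r\in(0,1/k)$ there is an $m$-plane $L_{x,r}$ through $x$ with
\[
    E_{k,\eps,j}\cap B_r(x)\subset L_{x,r}+B_{\eps r}.
\]

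The rest is a classical Vitali-type iteration. Fix $x_0\in E_{k,\eps,j}$ and $r_0<1/k$: cover $L_{x_0,r_0}\cap B_{r_0}(x_0)$ by at most $C_n\eps^{-m}$ balls of radius $2\eps r_0$ centred on the plane, which in particular cover the whole $\eps r_0$-tube. Apply the same tube containment inside each child ball meeting $E_{k,\eps,j}$, and iterate. After $\ell$ steps, $E_{k,\eps,j}\cap B_{r_0}(x_0)$ is covered by at most $(C_n\eps^{-m})^\ell$ balls of radius $(2\eps)^\ell r_0$, so for every $s>0$
\[
    \mathcal H^s_\infty\bigl(E_{k,\eps,j}\cap B_{r_0}(x_0)\bigr)\leq r_0^{\,s}\,\bigl(2^{s} C_n\, \eps^{\,s-m}\bigr)^\ell.
\]
Given $s>m$, choose $\eps=\eps(s)$ small enough that $2^sC_n\eps^{s-m}<1$ and send $\ell\to\infty$ to obtain $\mathcal H^s(E_{k,\eps,j})=0$. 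Summing over the countable stratification yields $\mathcal H^s(E)=0$ for every $s>m$, and hence $\dim_{\mathcal H}(E)\leq m$.

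The only delicate step---and essentially the only place where the hypothesis of the lemma is really used---is the promotion from the conditional containment (valid only inside the thin $f$-slice around $f(x)$) to an unconditional one on a single stratum. Once the $E_{k,\eps,j}$ partition has absorbed the dependence of $\varrho$ on $x$ and made every pair of stratum-points fall in each other's slices, what remains is the standard Minkowski iteration for sets ``$\eps$-approximable by $m$-planes''.
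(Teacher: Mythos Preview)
The paper does not actually prove this lemma: it is stated with a citation to \cite[Proposition~7.3]{FRS1} and used as a black box. Your argument is the standard one and is essentially correct; the stratification into the sets $E_{k,\eps,j}$ is exactly the right device to turn the $f$-sliced hypothesis into an unconditional $\eps$-flatness on each piece, and the self-similar covering iteration that follows is routine.

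Two very minor bookkeeping points, neither of which is a real gap. First, in the iteration you should recentre each child ball at a point of $E_{k,\eps,j}$ before applying the hypothesis again; this costs a fixed dilation factor (e.g.\ replace $2\eps$ by $4\eps$ per step), which is harmless. Second, you conclude $\mathcal H^s_\infty(E_{k,\eps,j}\cap B_{r_0}(x_0))=0$ for a single ball; to pass to $\mathcal H^s(E_{k,\eps,j})=0$ you still need to cover $E_{k,\eps,j}$ by countably many such balls of radius $<1/k$, which is trivial but should be said before ``summing over the countable stratification''.
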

        \begin{proof}[Proof of \Cref{prop:dimredotherfreq}]
            We apply \Cref{lem:dimredauxiliaryotherfreq} to $E:=\Lambda_{\text{\upshape{other}}}(u)$, $f(x):=\phi(0+,u(x+\cdot))$ and $m:=n-3$. Assumptions of \Cref{lem:dimredauxiliaryotherfreq} are satisfied because of \Cref{lem:lambdaother}, namely we can take $\varrho:=\rho_0$.
        \end{proof}
\section{Accelerated decay for $\lambda$ half-odd-integer}\label{sec:accelerateddecay}
        
        \subsection{}In this section we prove the following result (recall that the set $\Lambda^*$ appearing in its statement was defined in \cref{eq:deflLambdastar}).
        \begin{proposition}\label{prop:acceleratedecay}
            Let $\lambda\in\{\frac 3 2,\frac 7 2 ,\frac {11}2,\ldots\}$ and $u$ solve \eqref{eq:thinobstacle}. For each $x_\circ \in B_1\cap ( \Lambda_\lambda(u)\setminus \Lambda^*_\lambda(u))$ there exist $\beta,r_\circ>0$ and $S\in O(n-1)$ (all depending on $x_\circ$ and $u$) such that
            \begin{equation}\label{eq:accellerateddecay}
                \sup_{B_r}|u(x_\circ+\cdot)-q|\leq (r/r_\circ)^{\lambda+\beta}\|u(r_\circ\cdot)\|_{L^2(\de B_1)}\text{ for all }r\in(0,r_\circ),
            \end{equation}
            where $q:=H_\lambda(0+,u(x_\circ+\cdot))\psi_\lambda\circ S$. Furthermore, $H_\lambda(0+,u(x_\circ+\cdot))>0$.
        \end{proposition}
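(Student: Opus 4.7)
The plan is to carry out the two-speed dichotomy outlined in \Cref{sec:overview}. Set $v := u(x_\circ + \cdot)$ and $A(r) := A_\lambda(r, v)$. Because $x_\circ \notin \Lambda^*_\lambda(u)$, there is $\eps_\circ > 0$ and a subsequence $k_\ell \uparrow \infty$ along which the density of dyadic scales among $\{2^{-k}\}_{k=0}^{k_\ell}$ on which $\mathcal{P}_{\eps_\circ}^\lambda(2^{-k}, v)$ \emph{fails} is bounded below by some $p > 0$. Since $\phi(r, v) \downarrow \lambda$ as $r \downarrow 0$, \Cref{lem:closeness by frequency} applied at these non-flat scales gives $A(r, v) \leq \eta \|v_r\|_{L^2(\de B_1)}$ at a sequence of scales accumulating at $0$ for any prescribed $\eta > 0$, and monotonicity of $A$ in $r$ (inherited from \Cref{lem:Hlambdau}) then propagates this smallness to every scale below some $r_\circ > 0$.

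I would then establish two complementary decay estimates for $A$ along the sequence $r = 2^{-k} r_\circ$, valid under the above smallness. First, a \emph{weak} unconditional decay: for any $\gamma > 0$ there is $\eta_1(\gamma) > 0$ such that
\[
A(r, v) \le \eta_1 \|v_r\|_{L^2(\de B_1)} \quad \Longrightarrow \quad A(r/2, v) \leq 2^{-(\lambda - \gamma)} A(r, v),
\]
proved by a Monneau-type monotonicity formula applied to $w := v - q$, with $q$ the optimal 2D profile at scale $r$. Second, a \emph{strong} decay at non-flat scales: there is $\sigma = \sigma(n, \lambda, \eps_\circ) > 0$ such that failure of $\mathcal{P}_{\eps_\circ}^\lambda(r, v)$ implies $A(r/2, v) \leq 2^{-(\lambda + \sigma)} A(r, v)$. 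This strong decay is the heart of the proposition and the main technical obstacle. Its proof goes by compactness: if it were to fail along some $r_k \downarrow 0$, then the normalized differences $(V_{r_k} - q^*_{r_k})/A(r_k, v)$, with $V_r := v_r/\|v_r\|_{L^2(\de B_1)}$, would converge in $L^2_\loc$ to a $\lambda$-homogeneous $W$ solving \eqref{eq:thinobstaclelinearized} and $L^2(\de B_1)$-orthogonal to the orbit of $\psi_\lambda$ under $O(n-1)$ (by the minimality of $q^*_r$). Meanwhile, the failure of $\mathcal{P}_{\eps_\circ}^\lambda$ at each scale $r_k$ would supply $(n-2)$ linearly independent limit points in $\Lambda_\lambda(W)$ at distance $\geq \eps_\circ$ from $0$; $\lambda$-homogeneity at each forces $W$ to be translation invariant along $(n-2)$ directions, and the spherical-harmonic decomposition of \Cref{prop:sphericalharmonics} together with the orthogonality then yields $W \equiv 0$, a contradiction.

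Given the two rates, I would iterate them from scale $r_\circ$ downward. Writing $N(m)$ for the number of non-flat scales among the first $m$,
\[
A(2^{-m} r_\circ, v) \leq 2^{-m \lambda} \cdot 2^{-\sigma N(m) + \gamma(m - N(m))} A(r_\circ, v).
\]
Since $N(m_\ell)/m_\ell \geq p - o(1)$ along the subsequence from the first step, choosing $\gamma$ so small that $\beta := \sigma p - \gamma(1 - p) > 0$ gives $A(2^{-m_\ell} r_\circ, v) \leq 2^{-m_\ell(\lambda + \beta)} A(r_\circ, v)$ for large $\ell$; monotonicity of $A(\cdot, v)$ fills in the intermediate scales, yielding $A(r, v) \leq C(r/r_\circ)^{\lambda + \beta} \|v_{r_\circ}\|_{L^2(\de B_1)}$ for every $r \in (0, r_\circ)$.

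Finally, I would upgrade this $L^2(\de B_1)$ decay to the $L^\infty(B_r)$ bound of \eqref{eq:accellerateddecay}. The rotation $S \in O(n-1)$ is identified by passing to the limit in the minimizers $q^*_r$, which Cauchy-converge as $r \downarrow 0$ precisely because $A(r, v)$ now decays strictly faster than $\|v_r\|_{L^2(\de B_1)}$. Applying \Cref{lem:ellipticestimates} to $w := v - H_\lambda(0+, v)\psi_\lambda \circ S$, which satisfies $w \lap w \geq 0$ by \Cref{rem:wlapw}, converts the decay of $\|w_r\|_{L^2(\de B_1)}$ into the pointwise estimate \eqref{eq:accellerateddecay}. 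The positivity $H_\lambda(0+, v) > 0$ follows from \Cref{lem:Hlambdau}: since $0 \in \Lambda_\lambda(v)$ one has $H_0(r, v) \asymp r^{2\lambda}$, and the blow-up is the nonzero profile $\psi_\lambda \circ S$.
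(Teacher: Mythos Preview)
Your outline follows the paper's two-speed dichotomy strategy, but there are two genuine gaps.

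\textbf{The strong decay argument.} You claim that the compactness limit $W$ of the normalized differences is $\lambda$-homogeneous, that the failure of $\mathcal P^\lambda_{\eps_\circ}$ supplies $(n-2)$ limit points in ``$\Lambda_\lambda(W)$'', and that $\lambda$-homogeneity of $W$ at each forces translation invariance. None of this is justified. The limit $W$ solves the \emph{linearized} problem \eqref{eq:thinobstaclelinearized}, not the thin obstacle problem, so ``$\Lambda_\lambda(W)$'' has no direct meaning, and there is no mechanism forcing $W$ to be $\lambda$-homogeneous at the limit points $x_j$. What the paper actually does is substantially more delicate: for each $y^k_j\in\Lambda_\lambda(u_k)$ it applies the Monneau-type formula (\Cref{prop:weakmonneau}) to the \emph{shifted} difference $W^j_k:=u_k(y^k_j+\cdot)-q_k$, controls the drift term $D^j_k:=q_k(y^k_j+\cdot)-q_k$ via a separate argument showing $\|D^j_k\|/A(1,u_k)$ stays bounded (equation \eqref{eq:dichotomycase1}), and in the limit extracts decay bounds on the \emph{tangential derivatives} $\partial_\ell w_*$ at each $x_j$. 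These are exactly hypotheses (2) and (4) of \Cref{lem:auxiliary}, whose proof then exploits $\partial_j w_\infty\in\mathcal H_{\lambda-1}$ to obtain the translation invariance. The Monneau formula at the shifted points, the control of $D^j_k$, and the passage to derivatives are the missing ideas.

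\textbf{Maintaining smallness during the iteration.} You assert that monotonicity of $A(\cdot,v)$ ``propagates smallness to every scale below some $r_\circ$''. This is false: the dichotomy requires $A(r,v)\le\eta_\circ\|v_r\|_{L^2(\partial B_1)}$ at each step, and after a run of ``flat'' scales this ratio \emph{grows} (by a factor roughly $2^{\gamma+\delta_\circ}$ per step), so it can exceed $\eta_\circ$ and the iteration stalls. The paper fixes this with \Cref{lem:sequencelemma}: from $\liminf_m N(m)/m>p_\circ$ one extracts a starting index $n_\circ$ with $a_{n_\circ}=1$ and $N(k)\ge kp_\circ$ for \emph{every} $k\ge1$ thereafter. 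With this uniform density and the choice $\gamma=\sigma p_\circ/(2(1-p_\circ))$, $\delta_\circ<\beta/2$, the exponent $N(j)\sigma-(j-N(j))\gamma\ge j\beta$ at every step, so the ratio $A(2^{-j}r_\circ,v)/\|v_{2^{-j}r_\circ}\|$ stays below $\eta_\circ$ inductively. Your subsequence formulation cannot guarantee this, and monotonicity of $A$ alone does not fill the gap.
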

       
    \subsection{A Monneau-type monotonicity formula}\label{subsec:monneau}
        % \begin{lemma}(This Lemma is not necessary anymore, since we can use directly \Cref{lem:estimatewlapw})
        %     For all $\delta\in(0,1)$ there exists $\varphi\in C^{1}(\overline B_2)$ such that
        %     \begin{equation*}
        %         \begin{cases}
        %                 \lap\varphi\leq 0 & \text{ in }B_2,\\
        %                 \varphi = 0 & \text{ on }\de B_2,\\
        %                 |\nabla\varphi|\leq C &\text{ on }\de B_2,\\
        %                 \varphi\geq -\log\delta/C & \text{ in }B_1\cap \C_\delta,
        %         \end{cases}
        %     \end{equation*}
        %     where $\C_\delta:=\{x_{n-1}^2+x_n^2\leq \delta^2\}$ and the constant $C$ depends only on $n$.
        % \end{lemma}
        We prove the following monotonicity formula, which is inspired by a similar computation in \cite[Section 10]{FRS2}.
        \begin{proposition}\label{prop:weakmonneau}
            Let $w:=u-\tau\psi_\lambda\circ S$, with $u$ solving \eqref{eq:thinobstacle}, $\lambda\in\{\frac 3 2,\frac 7 2 ,\frac {11}2,\ldots\}$, $\tau>0$ and $S\in O(n-1)$. Assume further that $0\in\Lambda_\lambda(u)$. Then for all $\mu\in(0,\lambda)$ the function
            \begin{equation*}
                r\mapsto  \max_{s\in[1,2]}H_\mu(sr,w)=:\widetilde H_{\mu}(r,w)
            \end{equation*}
            is increasing in the interval $[\varrho,1/2]$ where
            \begin{equation}\label{eq:rhoweakmonneausize}
                \varrho := C_1\sup_{B_{1/2}}|w/\tau|^{1/\lambda},
            \end{equation}
            for $C_1=C_1(n,\lambda,\mu)$.
        \end{proposition}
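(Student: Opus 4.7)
My plan is to derive a differential inequality for $H_\mu(\cdot,w)$ and then extract monotonicity of $\widetilde H_\mu$ via a max-over-scales argument. The key difficulty, absent in classical Monneau formulas for the obstacle problem, is that $p:=\tau\psi_\lambda\circ S$ is not harmonic: its distributional Laplacian \eqref{eq:laplacianpsilambda} is a negative measure supported on $\{x_{n-1}<0,\,x_n=0\}$, producing a cross term $\int_{B_r} u\,\Delta p$ that must be carefully controlled.

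Starting from $H_\mu(r,w)=r^{-2\mu-(n-1)}\int_{\partial B_r}w^2$ and applying the divergence theorem,
\[
\frac{d}{dr}H_\mu(r,w)=\frac{2}{r^{2\mu+n}}\Big[rD(r,w)+r\int_{B_r}w\Delta w-\mu\int_{\partial B_r}w^2\Big].
\]
Expanding $w=u-p$ and using (i) Almgren's lower bound $rD(r,u)\ge\lambda\int_{\partial B_r}u^2$ (from $0\in\Lambda_\lambda(u)$, \Cref{lem:Hlambdau}); (ii) $rD(r,p)=\lambda\int_{\partial B_r}p^2$ and $p\Delta p\equiv 0$, both from $\lambda$-homogeneity plus the observation that $\Delta p$ is supported where $p|_{\{x_n=0\}}$ vanishes; (iii) integration by parts for the cross term $r\int_{B_r}\nabla u\cdot\nabla p=\lambda\int_{\partial B_r}up-r\int_{B_r} u\Delta p$; these collapse algebraically into
\[
rD(r,w)-\mu\int_{\partial B_r}w^2\ \ge\ (\lambda-\mu)\int_{\partial B_r}w^2+2r\int_{B_r}u\Delta p.
\]
Since $\Delta p\le 0$ on its support and $u\ge 0$ on the thin space, the last term is non-positive and is what remains to bound.

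To gain smallness I would apply the nonlinear estimate \Cref{lem:estimatewlapw} to the rescaled $w_r$, whose ``$\tau$'' parameter becomes $r^\lambda\tau$. The threshold condition $r\ge\varrho$ yields $\|w_r\|_{L^2(B_2\setminus B_1)}\le C\eta_{2r}\le C\tau r^\lambda C_1^{-\lambda}$, so \eqref{eq:boundwlapw} gives
\[
\int_{B_r}|w\Delta w|\ \le\ \frac{C}{C_1^\alpha}\,r^{-2}\,\|w\|_{L^2(B_{2r}\setminus B_r)}^2,\qquad \alpha=\alpha(n)>0.
\]
By \Cref{rem:wlapw}, $w\Delta w=-u\Delta p-p\Delta u$ is a sum of two non-negative terms, so the above bound controls $|\int_{B_r}u\Delta p|$ as well. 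Together with $\|w\|_{L^2(B_{2r}\setminus B_r)}^2\le C r^{2\mu+n}\widetilde H_\mu(r,w)$, a direct consequence of the definition of $\widetilde H_\mu$, I arrive at
\[
\frac{d}{dr}H_\mu(r,w)\ \ge\ \frac{2(\lambda-\mu)}{r}H_\mu(r,w)-\frac{CC_1^{-\alpha}}{r}\widetilde H_\mu(r,w).
\]

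Finally, choosing $C_1=C_1(n,\lambda,\mu)$ so large that $CC_1^{-\alpha}\le\lambda-\mu$, monotonicity of $\widetilde H_\mu$ follows by the usual dichotomy on the max-over-scales. If the maximizer $s^*(r_0)$ of $H_\mu(\cdot,w)$ on $[r_0,2r_0]$ satisfies $s^*(r_0)>r_0$, then $s^*(r_0)\in[r,2r]$ for every $r\in(r_0,s^*(r_0)]$, giving $\widetilde H_\mu(r)\ge H_\mu(s^*(r_0))=\widetilde H_\mu(r_0)$. If instead $s^*(r_0)=r_0$, then $\widetilde H_\mu(r_0)=H_\mu(r_0)$ and the differential inequality forces $\tfrac{d}{dr}H_\mu(r_0)\ge\tfrac{\lambda-\mu}{r_0}H_\mu(r_0)>0$, so $\widetilde H_\mu$ strictly increases through $r_0$. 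The main obstacle is securing the $C_1^{-\alpha}$ factor in front of the error: without it one only obtains $|E|\le(C/r)\widetilde H_\mu$ with $C$ independent of $C_1$, insufficient to dominate via the main coefficient. The nonlinear bound \Cref{lem:estimatewlapw} is precisely what converts the pointwise smallness $\eta_r/(\tau r^\lambda)\le C_1^{-\lambda}$ (built into the definition of $\varrho$) into a coefficient $\le CC_1^{-\alpha}$ that can be made arbitrarily small.
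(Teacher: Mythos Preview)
Your proposal is correct and follows essentially the same route as the paper: derive the differential inequality \eqref{eq:Hmuderivative} via the Almgren/homogeneity identities for $u$ and $p$, control the error term using \Cref{lem:estimatewlapw} with the smallness factor $C_1^{-\alpha}$ coming from the threshold $r\ge\varrho$, and conclude via the two-case max-over-scales argument. The only cosmetic difference is that the paper, in case~(b), first uses the ``free doubling'' $H_0(sr_\circ,w)\le s^{2\mu}H_0(r_\circ,w)$ to replace $\|w_{r_\circ}\|_{L^2(B_2\setminus B_1)}^2$ by $H_0(r_\circ,w)$, whereas you bound the annular $L^2$ norm by $\widetilde H_\mu(r,w)$ directly and only invoke $\widetilde H_\mu(r_\circ)=H_\mu(r_\circ)$ at the end; since these coincide in case~(b) the two arguments are equivalent.
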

        \begin{proof}
            Without loss of generality we can prove the statement under the assumptions $\tau=1$ and $S=\bm 1$. Let $\varrho$ be as in \eqref{eq:rhoweakmonneausize} with $C_1(n,\lambda,\mu)$ to be chosen.
            
            At point $r_\circ\in(\varrho,1/2)$ there are only two cases:
            \begin{itemize}
                \item[(a)] $\widetilde H_{\mu}(r_\circ,w) > H_\mu(r_\circ,w)$;
                \item[(b)] $\widetilde H_{\mu}(r_\circ,w) = H_\mu(r_\circ,w)$. 
            \end{itemize}
            In case (a), by continuity there must exist $\theta > 1$ very close to $1$, such that
            $$
            \max_{s\in [1,2]} s^{-2\mu}r_\circ^{-2\mu} H_0(sr_\circ ,w) \leq \max_{s\in [\theta,2]} s^{-2\mu}r_\circ^{-2\mu} H_0(sr_\circ,w).
            $$
            This allows us to compute for all  $t\in[1,\theta]$
            \begin{align*}
                \widetilde H_{\mu}(t r_\circ,w)&=\max_{s\in [1,2]} (s tr_\circ)^{-2\mu} H_0(st r_\circ,w)\\
                &=\max_{\bar s\in [\rho,2\rho]} (\bar sr_\circ)^{-2\mu} H_0(\bar sr_\circ,w)\\
                &\geq \max_{s\in [1,2]} (sr_\circ)^{-2\mu} H_0(sr_\circ ,w)=\widetilde H_{\mu}(r_\circ,w),
            \end{align*}
            and this proves monotonicity in case (a).
            
            Let us address case (b), where by assumption we know
            \begin{align}\label{eq:freedoubling}
                H_0(sr_\circ,w)\leq s^{2\mu}H_0(r_\circ,w)\quad \text{ for all }s\in[1,2].
            \end{align}
            It is sufficient to show that the (right) derivative of $r\mapsto H_\mu(r,w)$ at $r=r_\circ$ is non-negative, since for $\eps>0$
            \begin{equation*}
                \frac{\widetilde H_{\mu}(r_\circ+\eps,w)-\widetilde H_{\mu}(r_\circ,w)}{\eps}\geq \frac{H_\mu(r_\circ+\eps,w)-H_\mu(r_\circ,w)}{\eps},
            \end{equation*}
            where we used that $\widetilde H_{\mu}(r,w)\geq H_\mu(r,w)$, with equality if $r=r_\circ$ (thanks to (b)). We claim that 
            \begin{equation}\label{eq:Hmuderivative}
                \frac{d}{dr}H_\mu(r,w) \geq \frac{2r^{-2\mu}}{r} \left((\lambda-\mu)\int_{\de B_1} w_r^2 - \int_{B_1} w_r\lap w_r\right)\text{ for all }r\in(0,1),
            \end{equation}
            and we show how to conclude from here. We apply \Cref{lem:estimatewlapw} to $w_{r_\circ}$ and that as a consequence of \eqref{eq:freedoubling} we have $\|w_{r_\circ}\|_{L^2( B_2\setminus B_1)}^2 \le 2^{\mu+n/2} \|w_{r_\circ}\|_{L^2( \partial B_1)}^2$, to obtain:
            \begin{align*}
                \int_{B_1}|w_{r_\circ}\lap w_{r_\circ}| & \leq C(n,\lambda) \left(\frac{\|w_{r_\circ}\|_{L^2( B_2\setminus B_1)}}{\tau r_\circ^\lambda}\right)^{\alpha/\lambda}\|w_{r_\circ}\|_{L^2( B_2\setminus B_1)}^2\\
                & \leq C(n,\lambda) \left(\frac{\|w_{r_\circ}\|_{L^2( 
                \de B_1)}}{\tau r_\circ^\lambda}\right)^{\alpha/\lambda}\|w_{r_\circ}\|_{L^2(\de B_1)}^2.
            \end{align*}
            Now, since $\varrho\le r_\circ\le 1/2$, where $\varrho$ is and since $w^2$ is subharmonic (and hence $H(r,w)^{1/2} = \|w_r\|_{L^2(\de B_1)}$  is nondecreasing in $r$)   we have
            $$
            \left(\frac{\|w_{r_\circ}\|_{L^2( 
                \de B_1)}}{\tau r_\circ^\lambda}\right)^{\alpha/\lambda}\leq \left(\frac{\|w\|_{L^2( 
                \de B_{1/2})}}{C_1\sup_{B_{1/2}}|w|}\right)^{\alpha/\lambda}\leq C_1^{-\alpha/\lambda}.
            $$
            Hence choosing $C_1(n,\lambda,\mu)$ large we can ensure
            $$
            \int_{B_1}|w_{r_\circ}\lap w_{r_\circ}|\leq \frac{\lambda-\mu}{2}H_0(r_\circ,w),
            $$
            and inserting this inequality in \eqref{eq:Hmuderivative}, we find $H'_\mu(r_\circ,w)>0$.
            
            It remains to prove \cref{eq:Hmuderivative}. A direct computation gives 
            \begin{equation}\label{eq:derivativeformula}
                \frac{d}{dr}H_\mu(r,w)=\frac{2r^{-2\mu}}{r} \left(\int_{B_1}|\nabla w_r|^2 + \int_{B_1}w_r\lap w_r -\mu\int_{\de B_1} w_r^2\right)
            \end{equation}
            We set $q:=\tau\psi_\lambda\circ S$ and manipulate $D(r,w)$ in the following way (using in the first inequality below that $\phi(r,u)\geq\lambda$ and $\phi(r,q)\equiv \lambda$):
            \begin{align*}
                \int_{B_1}|\nabla w_r|^2&=\int_{B_1}|\nabla u_r|^2+\int_{B_1}|\nabla q_r|^2-2\int_{B_1}\nabla u_r\cdot \nabla q_r\\
                &\geq \lambda \int_{\de B_1} (u^2_r+q^2_r) -2 \int_{\de B_1} u_r \underbrace{x\cdot \nabla q_r}_{=\lambda q_r} +2\int_{B_1} u_r\lap q_r\\
                &=\lambda \int_{\de B_1} w_r^2 +2\int_{B_1} u_r\lap q_r\\
                &\geq \lambda \int_{\de B_1} w_r^2 + 2\int_{B_1} u_r\lap q_r+2\int_{B_1} \underbrace{q_r\lap u_r}_{\leq 0}\\
                &=\lambda \int_{\de B_1} w_r^2 -2 \int_{B_1} w_r\lap w_r,
            \end{align*}
            inserting this in \eqref{eq:derivativeformula} we have precisely \eqref{eq:Hmuderivative}. 
        \end{proof}
    \subsection{A dichotomy result}

        We need a preliminary lemma concerning solutions of the linearized problem \eqref{eq:thinobstaclelinearized}. In the following lemma and in the sequel it will be useful to consider the following ``diameter-type'' quantity (which will be used in connection to  $\mathcal P_\eps^\lambda(r,u)$). Given $X\subset \R^n$ and and integer $0\le m\le n$ let us define 
        \begin{equation}\label{diamn-3}
            {\rm diam}_m(X) : = 
                \min \{d\ge 0 \ :\ X\subset L+B_d,  \mbox{ for some $L\in {\rm Gr}( m, \R^n)$}\},
         \end{equation}       
        where ${\rm Gr}(m,\R^n)$ is the set for all $m$-dimensional linear subspaces of $\R^n$,

        \begin{lemma}\label{lem:auxiliary}
            For all $\lambda\in\{\frac 3 2,\frac 7 2 ,\frac {11}2,\ldots\}$, $\eps>0$ small, $M\geq 1 $ large there exists $\sigma>0$ such that the following holds. If
            \begin{enumerate}
                \item $w$ solves the linearized problem \eqref{eq:thinobstaclelinearized} in $B_1$ with $\|w\|_{L^2(\de B_1)}\leq 1$; 
                \item $\| w_*(\rho\cdot)\|_{L^2(\de B_1)}\leq M \rho^{\lambda-1/3}$ for all $\rho\in(0,1/4)$;
                \item $\int_{\de B_1}w\psi_\lambda=\int_{\de B_1}wx_i\psi_{\lambda-1}=0$ for all $i\in\{1,
                \ldots,n-2\}$;
                \item there exist $x_1,\ldots,x_{n-2}$ in $B_{1/2}\cap\{x_{n-1}=x_n=0\}$ with 
                \[{\rm diam}_{n-3}(\{x_1, \dots, x_{n-2}\})\ge \eps,\] 
                such that $\rho\|\de_i w(x_j+\rho\cdot)\|_{L^2(B_1)}\leq M \rho^{\lambda-1/3}$ for all $\rho\in(0,1/4)$ and for all $i,j\in\{1,\ldots,n-2\}$;
            \end{enumerate}
            then $\|w(\cdot/2)\|_{L^2(\de B_1)}\leq (1/2)^{\lambda+\sigma}$.
        \end{lemma}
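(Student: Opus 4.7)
The approach is compactness and contradiction. Assume the conclusion fails for the given $(\eps, M, \lambda)$: then there exist $w_k$ satisfying (1)--(4), with associated points $\{x_j^{(k)}\}_{j=1}^{n-2}$, yet $\|w_k(\cdot/2)\|_{L^2(\partial B_1)} \geq (1/2)^{\lambda+\sigma_k}$ with $\sigma_k\downarrow 0$. Standard elliptic estimates for~\eqref{eq:thinobstaclelinearized} off $\mathcal{N}_0$, together with a Poincar\'{e} inequality like that in the proof of \Cref{lem:holderdecay} (since $w_k$ vanishes on $\mathcal{N}_0$), give $H^1(B_1)$-compactness and hence a limit $w_*$ solving \eqref{eq:thinobstaclelinearized}, together with limit points $x_k^*\in \overline{B_{1/2}}\cap\{x_{n-1}=x_n=0\}$ satisfying $\mathrm{diam}_{n-3}(\{x_k^*\})\geq \eps$. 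By lower semicontinuity of the bounds in (2) and (4), $w_*$ inherits all of (1)--(4), and $\|w_*(\cdot/2)\|_{L^2(\partial B_1)}^2 \geq (1/2)^{2\lambda}$.

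Expand $w_*$ via \Cref{prop:sphericalharmonics}: $w_* = \sum_{\mu \in \N+1/2}\sum_m c_{\mu,m}\psi_{\mu,m}$. Condition (2) together with \Cref{prop:sphericalharmonics}(3) forces $c_{\mu,m} = 0$ for $\mu < \lambda$ (since no half-integer lies in $[\lambda - 1, \lambda - 1/3)$). Comparing $\sum c_{\mu,m}^2 \leq 1$ with $\|w_*(\cdot/2)\|_{L^2(\partial B_1)}^2 = \sum (1/2)^{2\mu}c_{\mu,m}^2 \geq (1/2)^{2\lambda}$ then annihilates every $\mu > \lambda$ coefficient and forces $\sum_m c_{\lambda,m}^2 = 1$. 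Thus $w_* = v \in \mathcal{H}_\lambda$ is a unit-norm, $\lambda$-homogeneous element.

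By tangential separation of variables together with \Cref{lem:2Dsolutionsclassification},
\[
v = \sum_{j=0}^{\lambda - 1/2} P_j(x_1, \ldots, x_{n-2})\, \psi_{\lambda - j}(x_{n-1}, x_n),
\]
where each $P_j$ is a harmonic homogeneous polynomial of degree $j$ on $\R^{n-2}$ (harmonicity of $P_j$ follows from $\Delta v = 0$ off $\mathcal{N}_0$ and the linear independence of the $\psi_{\lambda-j}$'s across $j$). Integrating the products in (3) on slices $\{|x'| = \text{const}\}$ of $\partial B_1$, the mean-value property for the harmonic $P_j$'s (which makes $\int_{S^{n-3}}P_j=0$ for $j\geq 1$, and similar for $P_jy_i$ with $j\geq 2$) kills every term except the $j = 0$ (resp.\ $j = 1$) term in $\int v\,\psi_\lambda$ (resp.\ $\int v\,x_i\psi_{\lambda - 1}$), so (3) forces $P_0 = 0$ and $P_1 = 0$.

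Since $\partial_i$ preserves~\eqref{eq:thinobstaclelinearized} for each $i \leq n-2$, applying \Cref{prop:sphericalharmonics}(3) at $x_k^*$ to $\partial_i v$ translates (4) into the vanishing of every spherical harmonic coefficient of $\partial_i v(x_k^* + \cdot)$ with frequency $\leq \lambda - 2$; equivalently, each mode of $v(x_k^* + \cdot)$ of frequency $\mu \leq \lambda - 1$ is translation-invariant in $x_1, \ldots, x_{n-2}$, hence (by 2D classification) a multiple of $\psi_\mu(y_{n-1}, y_n)$. Taylor-expanding each $P_j$ at $x_k^{*\prime}$ (the first $n-2$ coordinates of $x_k^*$) and regrouping $v(x_k^* + y)$ by homogeneity in $y$, this 2D requirement is equivalent to the polynomial identity
\[
P_j(x_k^{*\prime} + y') = P_j(x_k^{*\prime}) + P_j(y') \quad \forall y' \in \R^{n-2},\ j \geq 2,\ k \in \{1, \ldots, n-2\}.
\]
Differentiating yields $\partial^\alpha P_j(x_k^{*\prime}) = 0$ for all $1 \leq |\alpha| \leq j - 1$. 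Taking $|\alpha| = j - 1$, $\partial^\alpha P_j$ is a linear function on $\R^{n-2}$ vanishing at $x_1^{*\prime}, \ldots, x_{n-2}^{*\prime}$; the diameter hypothesis forces these $n-2$ points to form a basis of $\R^{n-2}$, so $\partial^\alpha P_j \equiv 0$. Ranging $\alpha$ over $|\alpha| = j - 1$ annihilates every top-order partial of $P_j$, so $P_j \equiv 0$ for all $j \geq 2$. Hence $v \equiv 0$, contradicting $\|v\|_{L^2(\partial B_1)} = 1$. The delicate step of the proof is this last one: translating (4) into 2D-ness of low-frequency modes of $v(x_k^* + \cdot)$, then into a polynomial identity for each $P_j$, and finally using the spread of the $x_k^*$'s to force $P_j \equiv 0$.
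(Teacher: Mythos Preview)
Your proof is correct, and its overall architecture (contradiction, compactness, reduction to a unit-norm $\lambda$-homogeneous limit $v\in\mathcal H_\lambda$, then forcing $v\equiv 0$ from (3) and (4)) matches the paper's. The difference lies in how the limit is eliminated. You expand $v=\sum_{j=0}^{\lambda-1/2} P_j(x')\,\psi_{\lambda-j}$ explicitly, use (3) with slice-orthogonality to kill $P_0,P_1$, and then translate (4) into the Taylor-type vanishing $\partial^\alpha P_j(x_k^{*\prime})=0$ for $1\le|\alpha|\le j-1$, finishing via a linear-algebra argument on the degree-$(j-1)$ partials. The paper bypasses the explicit tensor decomposition: it observes directly that $\partial_i v\in\mathcal H_{\lambda-1}$, and that (4) forces $\partial_i v(x_k^*+\cdot)\in\mathcal H_{\lambda-1}$ as well (the decay kills all modes $\le\lambda-2$, while the shift of a $(\lambda-1)$-homogeneous function has no modes above $\lambda-1$). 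Thus $\partial_i v$ is $(\lambda-1)$-homogeneous about $0$ and about each of the $n-2$ linearly independent points $x_k^*$, hence translation-invariant in $x'$, hence $\partial_i v=\beta_i\psi_{\lambda-1}$ by the 2D classification. Integrating gives $v=\alpha\psi_\lambda+(\beta\cdot x')\psi_{\lambda-1}$, and (3) then forces $\alpha=0$, $\beta=0$. Your route makes the structure of $\mathcal H_\lambda$ fully explicit and is self-contained, but the paper's ``homogeneous about several centers $\Rightarrow$ translation-invariant'' shortcut is quicker and sidesteps the polynomial combinatorics entirely.
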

        \begin{proof}
            We argue by contradiction and find a sequence $w_k$ solving \eqref{eq:thinobstaclelinearized} in $B_1$ such that (1),(2),(3) and (4) hold for suitable $M_\circ,\eps_\circ$, but $\|w_k(\cdot/2)\|_{L^2(\de B_1)}> (1/2)^{\lambda+\sigma_k}$ for some sequence $\sigma_k\downarrow 0$. We orthogonally split $w_k=f_k+g_k$ with $f_k\in\mathcal H_{\leq\lambda}$ and $g_k\in\mathcal H_{\geq \lambda+1}$. Thanks to (2) we actually have $f_k\in\mathcal H_\lambda$. This gives
            \begin{equation*}
                (1/2)^{2\lambda+2\sigma_k}<\|w_k(\cdot/2)\|_{L^2(\de B_1)}^2\leq(1/2)^{2\lambda}\|f_k\|_{L^2(\de B_1)}^2+(1/2)^{2\lambda+2}\|g_k\|_{L^2(\de B_1)}^2,
            \end{equation*}
            which, combined with $\|f_k\|_{L^2(\de B_1)}^2+\|g_k\|_{L^2(\de B_1)}^2=\|w_k\|^2_{L^2(\de B_1)}\leq 1$, gives
            \begin{equation}\label{eq:auxiliary}
                \|f_k\|_{L^2(\de B_1)}\to 1\text{ and } \|g_k\|_{L^2(\de B_1)}^2\to 0 \text{ as }\sigma_k\downarrow 0.
            \end{equation}
            We recall that \Cref{lem:ellipticestimates} holds for solutions of \eqref{eq:thinobstaclelinearized}, so for each $\rho<1$ we have
            \begin{equation*}
                \rho\|\nabla w_k(\rho\cdot)\|_{L^2(B_1)}\leq C_\rho \|w_k\|_{L^2(\de B_1)}.
            \end{equation*}
            Hence up to taking sub-sequences $w_k\weak w_\infty$ in $H^1_\loc(B_1)$ and in $L^2(\de B_1)$, so that $\int_{\de B_1}w_\infty\psi_\lambda=\int_{\de B_1}w_\infty x_i\psi_{\lambda-1}=0$ for all $i\in\{1,                \ldots,n-2\}$. \Cref{eq:auxiliary} entails $w_\infty\in \mathcal H_\lambda$ and by compactness of the trace $\|w_\infty(\cdot/2)\|_{L^2(\de B_1)}\geq (1/2)^{\lambda}$. Let us fix some $j\in\{1,\ldots,n-2\}$ and notice that $\de_jw_\infty\in\mathcal H_{\lambda-1}$. Item (4) gives, up to taking sub-sequences, that we have $(n-2)$ linearly independent points $x_1,\ldots,x_{n-2}$  such that the function $\de_jw_\infty(x_i+\cdot)$ (which belongs to $\mathcal H_{\leq \lambda-1}$, since $\de_jw_\infty\in\mathcal H_{\lambda-1}$) vanish with order $\lambda-1-1/3$ at the origin, so in fact $\de_jw_\infty(x_i+\cdot)\in\mathcal H_{\lambda-1}$. This gives that, for each $j$, $\de_j w_\infty$ is translation-invariant along $n-2$ directions. By \Cref{lem:2Dsolutionsclassification} we get $\de_jw_\infty (x)=\beta_j \psi_{\lambda-1}(x_{n-1},x_n)$ for some $\beta\in\R^{n-2}$. Integrating and using that $w_\infty\in\mathcal H_\lambda$ we find
            $$
                w_\infty(x',x_{n-1},x_n)=\alpha\psi_\lambda(x_{n-1},x_n)+(\beta\cdot x')\psi_{\lambda-1}(x_{n-1},x_n)\text{ for all}x\in\R^n,
            $$
            for some $\alpha\in\R$. This, together with $(4)$, gives $w_\infty\equiv0$, a contradiction as $\|w_\infty(\cdot/2)\|_{L^2(\de B_1)}\geq(1/2)^{\lambda}$.
        \end{proof}
        
        The following key result establishes an alternative, which holds at every scale $r$: either property $\mathcal P_\eps^\lambda(r,u)$ --- defined in \eqref{propertyP} --- holds true, or else the functional  $A =A_\lambda$ --- defined in \eqref{eq:defofAfunctional} --- satisfies an ``enhanced decay'' between scales $r$ and $r/2$.
        \begin{proposition}\label{prop:dichotomy}
            Let $\lambda\in\{\tfrac 3 2,\tfrac 7 2 , \tfrac{11}{2},\ldots\}$. For all $\eps,\gamma\in(0,1/3)$, there exists $\eta_\circ>0$ with the following property. If $u$ solves \eqref{eq:thinobstacle} with $0\in\Lambda_\lambda(u)$, and  for some $r\in(0,1)$ we have that
            \begin{equation*}
                A(r,u)\leq \eta_\circ \|u_r\|_{L^2(\de B_1)},
            \end{equation*}
            then the following decay holds:
            \begin{itemize}
                \item[(a)] if $\mathcal P_\eps^\lambda(r,u)$ holds, then $A(r/2,u)\leq(1/2)^{\lambda-\gamma}A(r,u)$;
                \item[(b)] if $\mathcal P_\eps^\lambda(r,u)$ fails, then $A(r/2,u)\leq(1/2)^{\lambda+\sigma}A(r,u)$.
            \end{itemize}
            Where $\sigma>0$ is half the exponent in \Cref{lem:auxiliary} and depends only on $n,\lambda,\eps$.  
        \end{proposition}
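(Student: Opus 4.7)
The plan is to argue both alternatives (a) and (b) by a common compactness–contradiction scheme, with (a) closed by the spherical harmonic expansion of Proposition~\ref{prop:sphericalharmonics} and (b) by the auxiliary Lemma~\ref{lem:auxiliary}. After scaling to $r=1$, suppose the conclusion fails: there is a sequence $u_k$ solving \eqref{eq:thinobstacle} with $0\in\Lambda_\lambda(u_k)$ and $A(1,u_k)\le\eta_k\|u_k\|_{L^2(\partial B_1)}$ for $\eta_k\downarrow 0$, violating one of the two decay bounds. Let $q_k=\tau_k\psi_\lambda\circ S_k$ attain the minimum in $A(1,u_k)$. After normalizing $\|u_k\|_{L^2(\partial B_1)}=1$ and rotating coordinates, I may assume $S_k\to\mathrm{Id}$ and $\tau_k\to 1$, so $A(1,u_k)=\eta_k$. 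Set $\tilde w_k:=(u_k-q_k)/\eta_k$, with $\|\tilde w_k\|_{L^2(\partial B_1)}=1$ and $\tilde w_k\Delta\tilde w_k\ge 0$ by Remark~\ref{rem:wlapw}. Lemma~\ref{lem:ellipticestimates} and the tangential $H^2$-estimate \eqref{eq:H2tangentialestimates} give uniform $L^\infty_{\loc}\cap H^1_{\loc}$ bounds on $\tilde w_k$ and on its tangential derivatives, so up to a subsequence $\tilde w_k\to w_\infty$ in $L^2(\partial B_1)$ with $\|w_\infty\|_{L^2(\partial B_1)}=1$.

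The Monneau-type formula (Proposition~\ref{prop:weakmonneau}) applied to $w_k=u_k-q_k$ with any $\mu\in(0,\lambda)$ is valid on $[\varrho_k,1/2]$ with $\varrho_k=O(\eta_k^{1/\lambda})\to 0$ (since $\sup_{B_{1/2}}|w_k|\le C\eta_k$); passing to the limit yields the growth bound $\|w_\infty(\rho\cdot)\|_{L^2(\partial B_1)}\le C(\mu)\rho^\mu$ for all $\rho\in(0,1/2]$. Combined with the barrier Lemma~\ref{lem:barrier}, which forces $\Lambda(u_k)\cap B_{1-\delta_k}$ into an $O(\eta_k^{1/\lambda})$-tubular neighborhood of $\mathcal{N}_0$, a standard stability argument shows that $w_\infty$ solves the linearized problem~\eqref{eq:thinobstaclelinearized} in $B_1$ (the vanishing on $\mathcal{N}_0$ uses that $\|S_k-\mathrm{Id}\|=O(\eta_k)$, inherited from the optimality of $(\tau_k,S_k)$, so that $\psi_\lambda(S_kx)/\eta_k\to 0$ uniformly on compact subsets of $\mathcal{N}_0$). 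The first-order conditions for the minimizer $(\tau_k,S_k)$, passed to the limit, produce the orthogonality relations $\int_{\partial B_1}w_\infty\psi_\lambda=0$ and $\int_{\partial B_1}w_\infty\,x_i\psi_{\lambda-1}=0$ for $i=1,\ldots,n-2$ (the second coming from infinitesimal rotations in the $(e_i,e_{n-1})$-planes and using $\partial_{n-1}\psi_\lambda\propto\psi_{\lambda-1}$).

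For part (a), the growth bound combined with Proposition~\ref{prop:sphericalharmonics} forces $w_\infty\in\bigoplus_{\nu\ge\lambda}\mathcal{H}_\nu$ (as no half-integer homogeneity lies strictly between $\mu$ and $\lambda$ for $\mu$ sufficiently close to $\lambda$), whence $\|w_\infty(\cdot/2)\|_{L^2(\partial B_1)}\le(1/2)^\lambda<(1/2)^{\lambda-\gamma}$; since $A(1/2,u_k)/A(1,u_k)\le\|\tilde w_k(\cdot/2)\|_{L^2(\partial B_1)}\to\|w_\infty(\cdot/2)\|_{L^2(\partial B_1)}$, this contradicts case (a). For part (b), the failure of $\mathcal{P}_\eps^\lambda(1,u_k)$ permits the iterative extraction of $n-2$ points $z_1^k=0,z_2^k,\ldots,z_{n-2}^k\in\Lambda_\lambda(u_k)\cap B_{1/2}$ with $\mathrm{diam}_{n-3}\{z_j^k\}\ge\eps$; by Lemma~\ref{lem:barrier} each $z_j^k$ lies $O(\eta_k^{1/\lambda})$-close to the axis $\{x_{n-1}=x_n=0\}$, so after extraction $z_j^k\to z_j^\infty$ on that axis with the same spanning property. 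Since $q_k$ is translation-invariant along $e_1,\ldots,e_{n-2}$, $\partial_i q_k\equiv 0$ for $i\le n-2$, so $\partial_i\tilde w_k=\partial_i u_k/\eta_k$; applying Monneau at the shifted base $z_j^k$ (which lies in $\Lambda_\lambda(u_k)$, and where $q_k$ is essentially the same 2D profile up to an $O(\eta_k^{1/\lambda})$ off-axis error) and combining with \eqref{eq:H2tangentialestimates} yields $\rho\|\partial_i\tilde w_k(z_j^k+\rho\cdot)\|_{L^2(B_1)}\le C\rho^{\lambda-1/3}$, which passes to the limit as hypothesis~(4) of Lemma~\ref{lem:auxiliary}. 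The lemma then gives $\|w_\infty(\cdot/2)\|_{L^2(\partial B_1)}\le(1/2)^{\lambda+2\sigma}$, contradicting case (b) with $\sigma$ equal to half the exponent produced by the lemma.

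The main obstacle is the verification of hypothesis~(4) of Lemma~\ref{lem:auxiliary}: the $1/\eta_k$ normalization threatens to blow up the tangential derivative bounds of $\tilde w_k$ at the spanning points $z_j^k$, and the rescue must simultaneously exploit the axis-translation invariance of $q_k$ (killing $\partial_i q_k$), the frequency-$\lambda$ behavior of $u_k$ at $z_j^k$ (applying Monneau at the shifted base), and the barrier-derived bound $\|S_k-\mathrm{Id}\|+\mathrm{dist}(z_j^k,\text{axis})=O(\eta_k^{1/\lambda})$ to absorb the off-axis error against $\eta_k$.
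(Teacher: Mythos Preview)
Your overall compactness--contradiction scheme, the proof of part (a), and the derivation of the orthogonality conditions (3) all match the paper's argument closely. The genuine gap is in the verification of hypothesis (4) of Lemma~\ref{lem:auxiliary}, and it lies precisely where you flag the ``main obstacle'' but do not resolve it.

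Write $W_k^j:=u_k(z_j^k+\cdot)-q_k$ and $D_k^j:=q_k(z_j^k+\cdot)-q_k$, so that $w_k(z_j^k+\cdot)=W_k^j-D_k^j$. The Monneau formula at the shifted base applies to $W_k^j$ and gives
\[
H(\rho,W_k^j)\le C\rho^{2(\lambda-1/3)}\,\widetilde H_\mu(1/4,W_k^j)\quad\text{for }\rho\in[\varrho_k',1/4].
\]
After dividing by $\eta_k^2$ this is useful only if $\|W_k^j\|_{L^2(\partial B_{1/4})}/\eta_k$ is bounded, i.e.\ only if $\|D_k^j\|/\eta_k$ is bounded. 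But the barrier bound you invoke gives merely $t_k:=(S_k z_j^k)\cdot e_{n-1}=O(\eta_k^{1/\lambda})$, hence $\|D_k^j\|\sim |t_k|=O(\eta_k^{1/\lambda})$, and since $\lambda>1$ this yields $\|D_k^j\|/\eta_k=O(\eta_k^{1/\lambda-1})\to\infty$. So the off-axis error is \emph{not} absorbable against $\eta_k$ from the barrier alone, and your sketch stops exactly here.

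The paper closes this gap by a separate contradiction argument (equation \eqref{eq:dichotomycase1}): assuming $\|D_k^j\|/\eta_k\to\infty$ along a subsequence, one has $W_k^j/\|D_k^j\|\to c\,w_*(y^*+\cdot)\pm\psi_{\lambda-1}$ with $|c|\le 1/M$ small; but the Monneau decay forces this limit to vanish at order $\lambda-1/3$, while the $\pm\psi_{\lambda-1}$ term (which dominates for $M$ large) is only $(\lambda-1)$-homogeneous, a contradiction for suitably chosen $\rho$ and $M$. Only after establishing $\|D_k^j\|\le \overline{M}\eta_k$ does the paper pass to the limit $\widetilde W_\infty=w_*(y^*+\cdot)+c\psi_{\lambda-1}$, apply Caccioppoli to $\widetilde W_\infty$, and use $\partial_\ell\psi_{\lambda-1}\equiv 0$ for $\ell\le n-2$ to extract the tangential derivative bound on $w_*$. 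This intermediate boundedness step is the missing idea in your proposal. (A minor side remark: the claim $\|S_k-\mathrm{Id}\|=O(\eta_k)$ does not follow from optimality of $(\tau_k,S_k)$, which only yields first-order orthogonality conditions; the paper never uses a rate on $S_k-\mathrm{Id}$, only $S_k\to\mathrm{Id}$.)
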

        \begin{proof}
            The statement is scale-invariant so we can prove it at $r=1$. We start proving case (a), and then strengthen the argument for case (b).
            
            By contradiction, assume that for some $\eps_\circ,\gamma\in(0,1/3)$ we can find $\eta_k\downarrow 0$ and $u_k$ solving $\eqref{eq:thinobstacle}$ with $0\in\Lambda_\lambda(u_k)$ and $\|u_k\|_{L^2(\de B_1)}=1$, such that  \begin{equation}\label{eq:dichotomycontradictiona}
                A(1,u_k)\leq \eta_k \quad \text{ and }\quad A(1/2,u_k) >  (1/2)^{\lambda-\gamma}A(1,u_k).
            \end{equation}
            We take the optimal $\tau_k\in\R$ and $S_k\in O(n-1)$ in such a way that if we set
            \begin{equation}
                w_k:=u_k-\tau_k\psi_\lambda\circ S_k\quad \text{ and }\quad \widetilde w_k:=\frac{w_k}{A(1,u_k)}
            \end{equation}
            then $\|\widetilde w_k\|_{L^2(\de B_1)}=1$. By compactness we can assume that coordinates were chosen in such a way that $S_k\to\bm 1$. Also, 
            since $\|u_k-\tau_k\psi_\lambda\circ~S_k\|_{L^2(\de B_1)} = A(1,u_k)\to 0$
            ---by \eqref{eq:dichotomycontradictiona}--- while
            $\|u_k\|_{L^2(\de B_1)}=1$, 
            we find that it must be $\tau_k\to 1$. We claim that $\widetilde w_k\weak w_*$ weakly in $H^1_\loc(B_1)$ where $w_*$ has the following properties
            \begin{enumerate}
                \item $w_*$ solves the linearized problem \cref{eq:thinobstaclelinearized} and $\| w_*\|_{L^2(\de B_1)}\leq 1$;
                \item $\| w_*(\rho\cdot)\|_{L^2(\de B_1)}\leq C(n,\lambda) \rho^{\lambda-1/3}$ for all $\rho\in(0,1/4)$.
            \end{enumerate}
            We also claim that, as a consequence of the second inequality in \ref{eq:dichotomycontradictiona}, we have
            \begin{equation}\label{wntoiwhwoih}
                (1/2)^{\lambda-\gamma}\leq \| w_*(\cdot/2)\|_{L^2(\de B_1)}.
            \end{equation}

            Since properties (1) and (2) readily implies that $w_*\in\mathcal{H}_{\geq \lambda}$ we find a contradiction with \eqref{wntoiwhwoih}, namely
            \begin{equation*}
                (1/2)^{2\lambda-2\gamma}\leq \| w_*(\cdot/2)\|_{L^2(\de B_1)}^2=\sum_{\lambda'\geq \lambda} \|w_{*\lambda'}(\cdot/2)\|^2_{L^2(\de B_1)}\leq (1/2)^{2\lambda},
            \end{equation*}
            impossible since $\gamma>0$. Here we used $\|w_*\|^2_{L^2(\de B_1)}\leq 1$ and denoted with $w_{*\lambda'}$ the $L^2(\de B_1)$-projection of $w_*$ onto $\mathcal H_{\lambda'}$.
            
            \textit{Proof of properties (1),(2) and \eqref{wntoiwhwoih}}. Since $w_k\lap w_k\geq 0$ (by Remark \ref{rem:wlapw}), boundedness of $\widetilde w_k$ in $H^1_\loc(B_1)\cap L^\infty_\loc(B_1)$ follows by \Cref{lem:ellipticestimates} and the fact that $\| w_k\|_{L^2(\de B_1)}\leq 1$. Now using \eqref{eq:dichotomycontradictiona} and the $L^\infty_\loc(B_1)$ bound we find for each fixed $\rho<1$ that
            \begin{align}\label{eq:dichotomyfreeboudnarylocation}
	            B_\rho \cap \{x_{n-1}<-{\delta_{k,\rho}},x_n=0\}&\subset \{w_k=0\}\text{ and }\\
	            B_{\rho}\cap \{x_{n-1}> \delta_{k,\rho},x_n=0\}&\subset \{\lap w_k=0\},
            \end{align}
            for $\delta_{k,\rho}:=C(n,\lambda,\rho)\eta_k^{1/\lambda}$. So, in the limit $k\uparrow\infty$, $w_*$ is harmonic in $B_1\setminus \mathcal N_0$ and has trace zero on $\mathcal N_0$. On the other hand by compactness of the trace operator and monotonicity of $H_0(\cdot,w_k)$ we have
            \begin{align*}
                \| w_*(\rho\cdot)\|_{L^2(\de B_1)}=\lim_k\| \widetilde w_k(\rho\cdot)\|_{L^2(\de B_1)}\leq \limsup_k\| \widetilde w_k\|_{L^2(\de B_1)}\leq 1,
            \end{align*}
            letting $\rho\uparrow 1$ we find (1). In order to prove (2), we apply \Cref{prop:weakmonneau} to $w_k$ with $\mu:=\lambda-1/3$ and find
            \begin{equation*}
                \rho^{-2(\lambda-1/3)}H(\rho,w_k)\leq \widetilde H_{\mu}(\rho,w_k)\leq \widetilde H_{\mu}(1/4,w_k)\leq C(\lambda)\| w_k\|_{L^2(\de B_1)}^2
            \end{equation*}
            for all $\rho\in(\varrho_k,1/4)$ where $\varrho_k\leq C(n,\lambda)\eta_k^{1/\lambda}$. Dividing by $A(1,u_k)^2$ and sending $k\uparrow \infty$ we find
            \begin{equation*}
                \rho^{-2(\lambda-1/3)}H(\rho,w_*)\leq C(\lambda)\text{ for all }\rho\in(0,1/4),
            \end{equation*}
            which is (2). We prove \eqref{wntoiwhwoih} using \eqref{eq:dichotomycontradictiona}, the compactness of the trace and the definition of the $A$ functional
            \begin{equation*}
                (1/2)^{\lambda-\gamma}\leq\lim_k \frac{A(1/2,u_k)}{A(1,u_k)}\leq \lim_k\|\widetilde w_k(\cdot/2)\|_{L^2(\de B_1)}=\|w_*(\cdot/2)\|_{L^2(\de B_1)}.
            \end{equation*}
            
            We can now turn to the proof of (b). In this case the contradiction assumption is stronger
            \begin{equation}\label{eq:dichotomycontradictionb}
                A(1,u_k)\leq\eta_k,\mathcal P_{\eps_\circ}^\lambda(1,u_k)\text{ fails, and }A(1/2,u_k) >  (1/2)^{\lambda+\sigma}A(1,u_k).
            \end{equation}
            We claim that $\widetilde w_k\weak w_*$ weakly in $H^1_\loc(B_1)$ where $w_*$ has properties (1) and (2) above and also
            \begin{itemize}
                \item[(3)] $\int_{\de B_1}w_*\psi_\lambda=\int_{\de B_1}w_*x_i\psi_{\lambda-1}=0$ for all $i\in\{1,
                \ldots,n-2\}$;
                \item[(4)] there exist $M=M(n,\lambda)$ and $x_1,\ldots,x_{n-2}$ in $B_{1/2}\cap\{x_{n-1}=x_n=0\}$ with \[{\rm diam}_{n-3}(\{x_1, \dots, x_{n-2}\})\ge \eps,\] such that $\rho\|\de_i w_*(x_j+\rho\cdot)\|_{L^2(B_1)}\leq M \rho^{\lambda-1/3}$ for all $\rho\in(0,1/4)$ and for all $i,j\in\{1,\ldots,n-2\}$.
            \end{itemize}
            Properties (1),(2),(3) and (4) allow us to apply \Cref{lem:auxiliary} to $w_*$, finding $$\|w_*(\cdot/2)\|_{L^2(\de B_1)}\leq(1/2)^{\lambda+2\sigma},$$ which contradicts assumption \eqref{eq:dichotomycontradictionb}.
            
            \textit{Proof of property (3)}. We recall that, for each $k$, $\tau_k$ was optimal so
            \begin{equation*}
                0  = \left.\frac{d}{dt}\right|_{t=1}\int_{\de B_1}(u_k-t\tau_k\psi_\lambda\circ S_k )^2= -2\tau_k\int_{\de B_1}w_k\psi_\lambda\circ S_k,
            \end{equation*}
            and since coordinates are taken in such a way that $S_k\to\bm 1$, we have $\int_{\de B_1}w_*\psi_\lambda=0$ just dividing by $A(1,u_k)$ and sending $k\to\infty$ (recall that $\tau_k$ is close to 1). For the other equations in (3) we use the optimality of $S_k$, that is for each $\ell\in\{1,\ldots, n-2\}$ and $\theta\in\R$ we consider $R_\theta\in O(n-1)$ defined by
            $$
                R_\theta(x):=(x_1,\ldots,\cos\theta x_\ell +\sin\theta x_{n-1},\ldots,\cos\theta x_{n-1}-\sin\theta x_\ell,x_{n})\text{ for all }x\in\R^n.
            $$
            Then optimality gives
            \begin{align*}
                0 & = \left.\frac{d}{d\theta}\right|_{\theta=0}\int_{\de B_1}(u_k-\tau_k\psi_\lambda\circ S_k \circ R_\theta)^2\\
                & = 2\tau_k\int_{\de B_1}w_k\big(\de_\ell(\psi_\lambda\circ S_k)x_{n-1}-\de_{n-1}(\psi_\lambda\circ S_k)x_\ell\big),
            \end{align*}
            since $\de_\ell\psi_\lambda\equiv0$ and $\de_{n-1}\psi_\lambda=c_{\lambda}\psi_{\lambda - 1}$, we can conclude as before letting $k\uparrow \infty$.
            
            \textit{Proof of property (4).} Negating $\mathcal P_{\eps_\circ}^\lambda(1,u_k)$ we find $y_1^{k},\ldots, y_{n-2}^{k}\in B_{1/2}\setminus B_{\eps_\circ}$ such that
            \begin{equation*}
                {\rm diam}_{n-3}(\{y^k_1, \dots, y^k_{n-2}\})\ge \eps_\circ\text{ and } y^{k}_j\in\Lambda_\lambda(u_k)
            \end{equation*}
            for all $k\geq 0$ and $j\in\{1,\ldots,n-2\}$. We remark that thanks to \eqref{eq:dichotomyfreeboudnarylocation} we have that 
            \begin{equation}\label{eq:Lambdalocation}
                B_{1/2}\cap \Lambda_\lambda(u_k)\subset \mathcal{C}_{\delta_k}
            \end{equation}
            where $\delta_k\downarrow 0$. We used that points in the interior of the contact set cannot have half-integral homogeneity. Define the numbers $t_k:=(S_ky^{k})\cdot {e_{n-1}}$ and notice that $t_k\to 0$ because of \eqref{eq:Lambdalocation}.
            
            Let us set for brevity $q_k:=\tau_k \psi_\lambda\circ S_k$ and define for each $j\in\{1,\ldots,n-2\}$ and $k\geq0$ the functions
            $$
            W^j_k:=u(y^{k}_j+\cdot)-q_{k};\quad D^j_k:=q_{k}(y^{k}_j+\cdot)-q_{k},
            $$
            so that $w_k(y^{k}_j+\cdot)=W^j_k-D^j_k$. Notice that $W^j_k$ is of the form for which we can apply \Cref{prop:weakmonneau}, obtaining
            \begin{equation}\label{eq:monneauW}
                \rho^{-2(\lambda-1/3)}H(\rho,W^j_k)\leq \widetilde H_{\mu}(\rho,W^j_k)\leq \widetilde H_{\mu}(1/4,W^j_k)\text{ for all }\rho\in(\varrho_k',1/4)
            \end{equation}
            also here $\varrho_k'\downarrow 0$ because
            \begin{equation*}
                \varrho_k'^\lambda/C_1\leq \sup_{B_{1/4}} |W^j_k|\leq \sup_{B_{3/4}}|w_k|+\sup_{B_{1/4}}|D_k|\leq C(n,\lambda)(\eta_k+t_k)\downarrow 0.
            \end{equation*}
            From now on we drop the $j$ in our notation, as it will remain fixed.
            
             Assume also $t_k\neq 0$, otherwise $D_k\equiv 0$. We first observe that, if $D_k/\|D_k\|_{L^2(B_1)}\to D_\infty$, then $D_\infty=\pm \psi_{\lambda-1}$. To see this we compute
            \begin{align*}
                D_\infty=\lim_k\frac{D_k}{\|D_k\|}=\lim_k\frac{\tfrac{1}{t_k}(\psi_\lambda(S_k\cdot+t_k\bm{e_{n-1}})-\psi_\lambda(S_k\cdot))}{\tfrac{1}{t_k}\|\psi_\lambda(S_k\cdot+t_k\bm{e_{n-1}})-\psi_\lambda(S_k\cdot)\|}=\frac{\pm\de_{n-1}\psi_\lambda}{\|\de_{n-1}\psi_\lambda\|}=\pm\psi_{\lambda-1},
            \end{align*}
            so in particular $D_\infty\in\mathcal H_{\lambda-1}\setminus\{0\}$.
        
            We claim that there is some large constant $\overline M=\overline M(n,\lambda)$ such that, for all $j= 1,\dots,n-2$, we have 
            \begin{equation}\label{eq:dichotomycase1}
            \limsup_k \frac{\|D_k\|_{L^2(\de B_1)}}{A(1,u_k)}\leq \overline M.
            \end{equation}
            Indeed, assume by contradiction this is not the case, so that along some subsequence that we do not relabel we have, assuming $y^k\to y^*$ as $k\to \infty$,
            \begin{equation*}
            W_\infty:=\lim_k\frac{W_k}{\|D_k\|}=\lim_k  \frac{A(1,u_k)}{\|D_k\|}{\widetilde{w}_k(y^{k}+\cdot)} +\frac{D_k}{\|D_k\|}= c w_*(y^*+\cdot) \pm \psi_{\lambda-1}
            \end{equation*}
            for some $c\in[0,1/M]$. On the other hand if we divide \cref{eq:monneauW} by $\|D_k\|_{L^2(\de B_1)}^2$ and send $k\uparrow \infty$ along this subsequence we find
            \begin{equation}\label{eq:Mbound1}
                \rho^{-2(\lambda-1/3)} H(\rho,W_\infty)\leq \widetilde H_{\mu}(1/4,W_\infty)\leq C(n,\lambda)\Big(\sup_{B_{3/4}}|cw_*|+|\psi_{\lambda-1}|\Big)\leq C(n,\lambda),
            \end{equation}
            for all $\rho\in(0,1/4)$. On the other hand we have
            \begin{align}\label{eq:Mbound2}
                H(\rho,W_\infty)^{\frac12}\geq H(\rho,\psi_{\lambda-1})^{\frac12} -H(\rho,cw_*(y_*+\cdot))^{\frac12} \\
                \geq \rho^{\lambda-1}-\frac1M \sup_{B_{3/4}}|w_*|\geq \rho^{\lambda-1}-\frac{C(n,\lambda)}{M},\notag
            \end{align}
            so combining \eqref{eq:Mbound1} and \eqref{eq:Mbound2} one gets
            \begin{equation*}
                1\leq C_1(n,\lambda)\rho^{2/3}+\frac{C_2(n,\lambda)\rho^{1-\lambda}}{M}\text{ for all }\rho \in(0,1/4).
            \end{equation*}
            Clearly one can arrange the choice of $ \rho=\overline \rho(n,\lambda)$ and $ M=\overline M(n,\lambda)$ such that both terms in the right hand side are smaller than $1/2$, making this inequality impossible. Thus we proved \eqref{eq:dichotomycase1}.
            
            Now we repeat a similar reasoning considering (along some subsequence)
            \begin{equation*}
            \widetilde W_\infty:=\lim_k\frac{W_k}{A(1,u_k)}=\lim_k  {\widetilde{w}_k(y^{k}+\cdot)} +\frac{\|D_k\|}{A(1,u_k)}\frac{D_k}{\|D_k\|}= w_*(y^*+\cdot) + c \psi_{\lambda-1},
            \end{equation*}
            for some $|c|\leq \overline M$. 
            But, as before, we divide \cref{eq:monneauW} by $A(1,u_k)^2$ and send $k\uparrow \infty$ finding \footnote{The fact that in this equation the bound on the right hand side is universal will be important when we apply \Cref{lem:auxiliary}, in order to have that the exponent $\sigma$ only depends on $n,\lambda,\eps$. This is why we prove \eqref{eq:dichotomycase1} first.}
            \begin{equation*}
                \rho^{-2(\lambda-1/3)} H(\rho,\widetilde W_\infty)\leq \widetilde H_{\mu}(1/4,\widetilde W_\infty)\leq C(n,\lambda)\sup_{B_{3/4}}|w_*|+\overline M|\psi_{\lambda-1}|\leq M(n,\lambda),
            \end{equation*}
            for all $\rho\in(0,1/4)$.
            Since $y^*\in\{x_{n-1}=x_n=0\}$ we get that $\widetilde W_\infty$ solves \eqref{eq:thinobstaclelinearized} in $B_{3/4}$, so the Caccioppoli inequality for solutions of \eqref{eq:thinobstaclelinearized} gives, for every $\ell\in\{1,\ldots,n-2\}$, that
            \begin{equation*}
                \rho^2\|(\de_\ell w_*)(y^*+\rho\cdot)\|_{L^2( B_{1/2})}\leq \rho^2\|\nabla \widetilde W_\infty(\rho\cdot)\|^2_{L^2(B_{1/2})} \leq H(\rho,\widetilde W_\infty)\leq M\rho^{2(\lambda-1/3)}
            \end{equation*}
            for all $\rho\in(0,1/8)$. We crucially used that $\de_\ell \widetilde W_\infty=\de_\ell w_*(y^*+\cdot)$.
            
            Since we can repeat this reasoning for each $j$, we proved that $w_*$ satisfies (4), where $x_j = y^*_j$.
        \end{proof}
                Before proving \Cref{prop:acceleratedecay} we need the following observation
        \begin{lemma}\label{lem:sequencelemma}
            Let $\{a_j\}_{j\in\N}\subset \{0,1\}$ be a sequence such that
            \begin{equation*}
                \liminf_m\frac{a_0+\ldots+a_m}{m} > p > 0.
            \end{equation*}
            Then for all $m>0$ there exists $n\geq m$ such that
            \begin{equation}\label{eq:sequencelemma}
                \frac{a_n+a_{n+1}+\ldots+a_{n+j}}{j+1}\geq p\quad \text{ for all }j\geq 0.
            \end{equation}
        \end{lemma}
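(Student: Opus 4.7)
The plan is a standard stopping-time / ``sunrise'' argument, proceeding by contradiction. Suppose the conclusion fails for some fixed $m \geq 1$: then for every $n \geq m$ there exists an integer $j(n) \geq 0$ such that
\[
    a_n + a_{n+1} + \dots + a_{n + j(n)} < p\,(j(n)+1).
\]
I would then construct a greedy sequence of disjoint blocks covering $[m,\infty)$: set $n_1 := m$, and inductively $n_{k+1} := n_k + j(n_k) + 1$. Since $j(n_k) \geq 0$, one has $n_{k+1} \geq n_k + 1$, so $n_k \to \infty$. By construction, the intervals $I_k := [n_k,\,n_{k+1}-1] \cap \N$ are disjoint, have total length $n_{k+1} - n_k = j(n_k) + 1$, and on each of them
\[
    \sum_{i \in I_k} a_i \;<\; p\,|I_k|.
\]

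Summing these inequalities for $k = 1, \dots, K$ and adding the initial contribution from indices $0, \dots, m-1$, I get
\[
    \sum_{i=0}^{n_{K+1}-1} a_i \;\leq\; \sum_{i=0}^{m-1} a_i \;+\; p\,(n_{K+1} - m) \;=:\; C_m + p\,(n_{K+1} - m),
\]
where $C_m$ depends only on $m$ and the initial terms of the sequence. Dividing by $n_{K+1}$ and letting $K \to \infty$ (so $n_{K+1} \to \infty$) yields
\[
    \liminf_{N \to \infty} \frac{a_0 + a_1 + \dots + a_{N-1}}{N}
    \;\leq\; \limsup_{K \to \infty} \frac{C_m + p\,(n_{K+1} - m)}{n_{K+1}} \;=\; p,
\]
which contradicts the hypothesis $\liminf_m \frac{a_0 + \dots + a_m}{m} > p$. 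Hence the required $n \geq m$ must exist.

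The argument is short and essentially elementary; there is no real obstacle once the stopping-time construction is set up. The only point requiring slight care is ensuring that the greedy intervals genuinely exhaust $[m,\infty)$ (which follows from $n_{k+1} > n_k$) and that one passes to $\liminf$ along the subsequence $N = n_{K+1}$, which is legitimate since the $\liminf$ is over \emph{all} $N$.
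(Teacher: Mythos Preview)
Your proof is correct and follows essentially the same greedy stopping-time argument as the paper's: assume the conclusion fails, build disjoint blocks $[n_k,n_{k+1}-1]$ on each of which the average is $<p$, sum, and contradict the hypothesis along the subsequence $n_{K+1}\to\infty$. Your version is in fact slightly cleaner than the paper's, which includes an unnecessary refinement (enlarging $j(n)$ so that $a_{n+j(n)+1}=1$) that you correctly omit.
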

        \begin{proof}
            By contradiction there exists $M_\circ$ such that for all $n\geq M_\circ$ there is $j(n)\geq 0$ such that 
            \begin{equation}\label{eq:asd}
                \frac{a_n+a_{n+1}+\ldots+a_{n+j(n)}}{j(n)+1}< p.
            \end{equation}
            Notice that we can always refine the choice of $j(n)$ (enlarging it) in such a way that $a_{n+j(n)+1}=1$ and \eqref{eq:asd} still holds. Apply this for $\ell$ times starting at some $n_1>M_\circ$ with $a_{n_1}=1$ and then for $n_2:=n_1+j(n_1)+1$, ..., until $n_{\ell+1}:=n_\ell+j(n_{\ell})+1$. We have
            \begin{align*}
                \sum_{k=n_1}^{n_{\ell}+j(n_\ell)}a_k =\sum_{h =1}^\ell\sum_{k=n_\ell}^{n_{\ell}+j(n_\ell)}a_k<p \sum_{h =1}^\ell (j(n_\ell)+1),
            \end{align*}
            notice that the sum on the right is exactly the number of terms on the left, so looking along the subsequence $\{a_{n_\ell}\}$ we find
            $$
            \liminf_m\frac{a_0+\ldots+a_m}{m}\leq p,
            $$
            a contradiction.
        \end{proof}
        We can now prove
        \begin{proof}[Proof of \Cref{prop:acceleratedecay}]
            We can assume $x_\circ =0$. $0\not\in\Lambda^*_\lambda(u)$ means that there exists $\eps_\circ=\eps_\circ(x_\circ)>0$ such that, if we define for $j\geq 0$ the sequence 
            \begin{equation*}
                a_j:=\begin{cases}
                    0 & \text{ if $\mathcal{P}_{\eps_\circ}^\lambda(2^{-j},u)$ holds true,} \\
                    1 & \text{ if $\mathcal{P}_{\eps_\circ}^\lambda(2^{-j},u)$ fails,}
                \end{cases}
            \end{equation*}
            then 
            \begin{equation*}
                \liminf_m\frac{a_0+\ldots+a_m}{m} > p_\circ,
            \end{equation*}
            for some $p_\circ=p_\circ(x_\circ)>0$. Let $\sigma=\sigma(n,\lambda,\eps_\circ)$ be given by \Cref{prop:dichotomy}. We define the numbers $\gamma$ and $\beta$ by
            \begin{equation*}
                \gamma:=\frac{\sigma p_\circ}{2(1-p_\circ)},\quad \beta:=\frac{\sigma p_\circ}{2}
            \end{equation*}
            this choice ensures that
            \begin{equation}\label{eq:gammabetachoice}
                t\sigma -(1-t)\gamma\geq \beta\text{ for all }t\in[ p_\circ, 1].
            \end{equation}
            Apply \Cref{prop:dichotomy} with these $\eps_\circ$ and $\gamma$ to find $\eta_\circ$. Apply \Cref{lem:closeness by frequency} to such $\eta_\circ$ to find $M_\circ$ and $\delta_\circ$. We refine the choice of $\delta_\circ$ requiring $\delta_\circ<\beta/2$. Now \Cref{lem:sequencelemma} ensures we can find $n_\circ$ so large that $\phi(M_\circ 2^{-n_\circ},u)\leq \lambda+\delta_\circ$ (because $0\in\Lambda_\lambda(u)$), and $a_{n_\circ}=1$ and if we define
            $$
            N(k):=a_{n_\circ}+\ldots+a_{n_\circ+k-1}\text{ for all }k\geq 1,
            $$
            then $N(k)\geq kp_\circ$ for all $k\geq 1$.
            
            We set $r_\circ:=2^{-n_\circ}$ and prove by induction that 
            \[
            A(2^{-j}r_\circ,u)\le \big(\tfrac{1}{2}\big)^{(\lambda+\beta)j} \eta_\circ \|u_{r_\circ}\|_{L^2(\de B_1)}\text{ for all } j\geq 1.
            \]
            Note that since we have $N(k)\geq kp_\circ$, for all $k\geq 1$,  by our choice of $\gamma$ and $\beta$ is suffices to show that 
            \begin{equation}\label{eq:inductive}
                A(2^{-j}r_\circ,u)\leq \big(\tfrac{1}{2}\big)^{j\lambda+N(j)\sigma-(j-N(j))\gamma}\eta_\circ \|u_{r_\circ}\|_{L^2(\de B_1)}\text{ for all }j\geq 1,
            \end{equation}
            since by \eqref{eq:gammabetachoice} we have
            \[
            j\lambda+N(j)\sigma-(j-N(j))\gamma \ge j\lambda+ p_\circ j \sigma- (j-p_\circ j)\gamma = j (\lambda+\beta).
            \]
            
            We will reason by induction over $j$. The case $j=1$ is exactly \Cref{lem:closeness by frequency}. Let us prove that the validity of \eqref{eq:inductive}  for $j+1$ assuming its validity for $j$. Since $\phi(r_\circ,u)\leq \delta_\circ+\beta/2$ and $j\lambda+N(j)\sigma-(j-N(j))\gamma\geq j(\lambda+\beta)$ we find using \Cref{lem:Hlambdau} that
            \begin{align*}
                A(2^{-j}r_\circ,u)& \leq (1/2)^{j\lambda+N(j)\sigma-(j-N(j))\gamma}2^{j(\lambda+\beta/2)}\eta_\circ\|u_{r_\circ/2^j}\|_{L^2(\de B_1)} \\
                & \leq (1/2)^{j\beta/2}\eta_\circ \|u_{r_\circ/2^j}\|_{L^2(\de B_1)}<\eta_\circ \|u_{r_\circ/2^j}\|_{L^2(\de B_1)},
            \end{align*}
            so we can apply \Cref{prop:dichotomy} to $u_{r_\circ/2^j}$ and find the expected decay according to whether $\mathcal{P}_{\eps_\circ}(r_\circ/2^j,u)$ holds true or not.
            
            For each $j\geq 1$ we set $q_j:=\tau_j\psi_\lambda\circ S_j$ so that
            \begin{equation*}
                A(2^{-j}r_\circ,u)=\|(u_{r_\circ}-q_j)(2^{-j}\cdot)\|_{L^2(\de B_1)}.
            \end{equation*}            
            From \cref{eq:inductive} one finds that
            \begin{equation*}
                \|q_j-q_{j+1}\|_{L^2(\de B_1)}\leq C(\beta)2^{-j\beta}\eta_\circ\|u_{r_\circ}\|_{L^2(\de B_1)}\text{ and }\tau_j\to r_\circ^\lambda H_\lambda(0+,u).
            \end{equation*}
            Hence summing the geometric series and refining the choice of $\eta_\circ$, one gets \eqref{eq:accellerateddecay}.
            
            In order to complete the proof, assume by contradiction that $H_\lambda(0+,u)=0$. Then by \eqref{eq:accellerateddecay} we find $\|u(r\cdot)\|_{L^2(\de B_1)}\ll r^{\lambda+\beta}$, contradicting \Cref{lem:Hlambdau}.
        \end{proof}
\section{Accelerated decay implies frequency monotonicity}\label{sec:frequency}
    \subsection{}Throughout this section we fix a solution $u$ of \eqref{eq:thinobstacle} and assume $0\in\Lambda_\lambda(u)\setminus \Lambda^*_\lambda(u)$, for some $\lambda\in\{\tfrac 3 2,\tfrac 7 2, \tfrac{11}{2},\ldots\}\cup\{1,3,5,\ldots\}$. Thanks to the previous section we may suppose
    \begin{equation}\label{eq:extradecayassumption}
        \sup_{B_{2r}}|u-\psi_\lambda|\leq r^{\lambda+\beta}\text{ for all }r\in(0,r_\circ),
    \end{equation}
    for some $r_\circ,\beta\in(0,1)$. The goal of this section is showing
    \begin{proposition}\label{prop:decayimprovement}
        Let $u$ solve \eqref{eq:thinobstacle} and $\lambda\in \{\tfrac 3 2,\tfrac 7 2,\ldots\}\cup\{1,3,5,\ldots\}$. If \eqref{eq:extradecayassumption} holds for some positive $\beta$ and $r_\circ$, then
        \begin{equation}\label{eq:decayimprovement}
            \sup_{B_r}|u-\psi_\lambda|\leq Cr^{\lambda+1}\text{ for all }r\in(0,r_\circ),
        \end{equation}
        where $C=C(n,\lambda,r_\circ,\beta)$.
    \end{proposition}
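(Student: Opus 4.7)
The plan is to set $w := u-\psi_\lambda$ and establish the improved decay via a second blow-up analysis, the main ingredient being the approximate monotonicity of a truncated Almgren frequency for $w$. The ultimate goal is to extract a blow-up $Q$ of $w$ whose homogeneity $\mu$ is forced into a discrete set, so that the arbitrarily small gain $\beta>0$ from \eqref{eq:extradecayassumption} must upgrade all the way to the next admissible value, which in both cases is $\lambda+1$.

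First, I would define, for $\gamma\geq \lambda+1$ fixed and a large constant $C$ to be chosen, the truncated frequency
\[
\Phi(r):=\frac{D(r,w)+\gamma r^{2\gamma}}{H(r,w)+r^{2\gamma}}+Cr^{\alpha\beta/\lambda},
\]
and show that $\Phi$ is nondecreasing on $(0,r_\circ)$. Differentiating in $r$ produces the usual Cauchy--Schwarz nonnegative square plus an error involving $\int_{B_r}w\lap w$. Unlike in the classical harmonic setting, $\psi_\lambda$ has a nontrivial distributional Laplacian (concentrated on $\mathcal{N}_0$ when $\lambda\in\tfrac 3 2+2\N$, on the full thin space when $\lambda$ is odd), so this cross-term does not vanish. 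The key tool is the nonlinear estimate \Cref{lem:estimatewlapw}, which yields
\[
\int_{B_r}|w_r\lap w_r|\leq C r^{\alpha\beta/\lambda}\|w_r\|^2_{L^2(B_2\setminus B_1)}.
\]
The truncation $r^{2\gamma}$ prevents the denominator from degenerating when $H(r,w)$ becomes artificially small, and the additive correction $Cr^{\alpha\beta/\lambda}$ is tuned precisely to absorb the above error and close the monotonicity inequality.

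Once monotonicity is in hand, set $\mu := \lim_{r\downarrow 0}\Phi(r)\in[0,\gamma]$. By the elliptic estimates of \Cref{lem:ellipticestimates} (applicable since $w\lap w\geq 0$ by \Cref{rem:wlapw}), the normalized rescalings $\bar w_\ell := w_{r_\ell}/\|w_{r_\ell}\|_{L^2(\de B_1)}$ are pre-compact in $L^2_\loc$, with a limit $Q$ which is $\mu$-homogeneous (the usual consequence of monotonicity of $\Phi$ under rescaling). To characterize $Q$, I would use \Cref{lem:barrier} (respectively \Cref{lem:barrierodd}) together with \eqref{eq:extradecayassumption} to show that $\{w\neq 0\}\cap B_r$ is contained in a shrinking tubular neighborhood of $\mathcal{N}_0$ (respectively of $\{x_n=0\}$); in the limit, $Q$ vanishes on $\mathcal N_0$ (respectively on the full thin space) and is harmonic outside. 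That is, $Q$ solves \eqref{eq:thinobstaclelinearized} in the first case, and is a homogeneous harmonic function vanishing on $\{x_n=0\}$ in the second. In either case the classification of such homogeneous solutions (given by \Cref{prop:sphericalharmonics} in the first case) forces $\mu$ to lie in a discrete set with no admissible values strictly between $\lambda$ and $\lambda+1$.

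Finally, the assumption \eqref{eq:extradecayassumption} combined with \Cref{lem:ellipticestimates} gives $\|w_r\|_{L^2(\de B_1)}\leq Cr^{\lambda+\beta}$, so $\mu\geq \lambda+\beta$. By the discreteness just mentioned, $\mu\geq \lambda+1$. Integrating the resulting lower bound $\Phi(r)\geq \lambda+1$ as a differential inequality for $H(r,w)$ yields $H(r,w)\leq Cr^{2(\lambda+1)}$ on $(0,r_\circ)$, and \eqref{eq:decayimprovement} follows via the $L^\infty$ bound in \Cref{lem:ellipticestimates}. The principal obstacle is the monotonicity step: all classical incarnations of Almgren's technique rely crucially on the harmonicity of the leading-order profile, which fails here for $\lambda\in \tfrac 3 2+2\N$. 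The nonlinear estimate \Cref{lem:estimatewlapw} compensates for this, but only with a slow rate $r^{\alpha\beta/\lambda}$; the delicate point is to balance the truncation exponent $2\gamma$ against the additive correction $Cr^{\alpha\beta/\lambda}$ in $\Phi$, so as to simultaneously preserve monotonicity and ensure the correct identification of the limit $\mu$.
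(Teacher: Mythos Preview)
Your overall strategy matches the paper's: truncated Almgren monotonicity for $w=u-\psi_\lambda$, second blow-up, classification via \Cref{prop:sphericalharmonics}, and the gap argument $\mu\geq\lambda+\beta\Rightarrow\mu\geq\lambda+1$. There is, however, one genuine gap in your monotonicity step.

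You write that differentiating $\Phi$ produces ``the usual Cauchy--Schwarz nonnegative square plus an error involving $\int_{B_r}w\lap w$''. This is incomplete. The derivative formula \eqref{eq:formulaphigammaderivative} reads
\[
\frac{d}{dr}\phi_\gamma(r,w)\geq \frac{2}{r}\frac{\big(\int_{B_1}w_r\lap w_r\big)^2}{(H_0(r,w)+r^{2\gamma})^2}+\frac{2}{r}\frac{\int_{B_1}\big(\phi_\gamma(r,w)\,w_r-x\cdot\nabla w_r\big)\lap w_r}{H_0(r,w)+r^{2\gamma}},
\]
so the error term contains \emph{two} contributions: one multiple of $\int_{B_1}w_r\lap w_r$, which \Cref{lem:estimatewlapw} indeed controls, and a second term $-\int_{B_1}(x\cdot\nabla w_r)\lap w_r$ which does \emph{not} follow from \Cref{lem:estimatewlapw}. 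The paper devotes a separate computation (\Cref{lem:keyestimateformonotonicity}, estimate \eqref{eq:gradwrlapwr}) to this: one splits $B_1$ into $\{u_r=0\}$ and $\{u_r>0\}$, uses that on $\{u_r=0\}$ one has $x\cdot\nabla w_r=\lambda w_r$, and on $\{u_r>0\}$ integrates by parts \emph{inside the thin space} against the explicit density $c r^\lambda(x_{n-1})_-^{\lambda-1}$ of $\lap w_r$, so that the only surviving boundary contribution lives on $\partial B_1\cap\mathcal C_{\delta_r}\cap\{x_n=0\}$ and can be bounded using \Cref{lem:holderdecay}. Without this second estimate your monotonicity inequality does not close.

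A smaller point: you assert that the blow-up $Q$ is $\mu$-homogeneous ``as the usual consequence of monotonicity''. In the truncated setting this is not automatic, and the paper does not argue this way. Instead it passes the two-sided growth bounds from \Cref{lem:H ratio} to the limit, obtaining $cR^{2\lambda^*}\leq H(R,Q)\leq CR^{2\overline\lambda+\delta}$ with $\lambda<\lambda^*\leq\overline\lambda<\lambda+1$, and then invokes the spherical-harmonic expansion of \Cref{prop:sphericalharmonics} to see that no nonzero $Q$ can have such growth, reaching a contradiction. This avoids having to justify homogeneity of $Q$ directly.
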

    In order to prove this proposition we will exploit the almost monotonicity of the truncated frequency function of $w:=u-\psi_\lambda$.
    \subsection{Known formulas}
        For each $\gamma\geq 0$ define the functionals
        \begin{equation*}
            \phi_\gamma(r,w):=\frac{D(r,w)+\gamma r^{2\gamma}}{H_0(r,w)+r^{2\gamma}};\quad g_\gamma(r,w):=\frac{\|w_r\|^2_{L^2(B_2\setminus B_1)}}{H_0(r,w)+r^{2\gamma}}.
        \end{equation*}

        By a direct computation (see \cite[Lemma 2.3]{FRS1}) one finds 
        \begin{equation}
            \label{eq:formulaphigammaderivative}
            \frac{d}{dr} \phi_\gamma(r,f) \ge \frac{2}{r}  \frac{{\left( \int_{B_1} f_r\lap f_r\right)^2}}{ (H_0(r,f) + r^{2\gamma} )^2}+\frac 2 r \frac{\int_{B_1} \left(\phi_\gamma(r,f) f_r - x\cdot \nabla f_r\right)\lap f_r }{ H_0(r,f) + r^{2\gamma} },
        \end{equation}
        for all functions $f$ regular enough.
        We recall from \cite{FRS1} the following result which generalizes \Cref{lem:Hlambdau} to the ``truncated setting''. 
        \begin{lemma}[{\cite[Lemma 4.1]{FRS1}}]\label{lem:H ratio}
        Let $f\colon B_1\to\R$ such that $f\lap f\geq 0$.
        Assume that for some $\kappa >0$, $r_\circ>0$ and a constant $C_1>0$ we have
        \begin{equation}\label{eq:assumptionsHratio}
        \frac{d}{dr} \Big(\phi_\gamma(r,f) + C_1 r^{\kappa} \Big)\ge \frac 2 r  
        \frac{\left( \int_{B_1} f_r\lap f_r\right)^2}{\big(H_0(r,f) + r^{2\gamma}\big)^2} \quad \text{ for all } \,r\in (0,r_\circ).
        \end{equation}
        Then the following holds:
        \begin{enumerate}
        	\item Suppose that $\sup_{[0,R]}\phi_\gamma(\cdot, w)\leq \overline{\lambda}$, for some $R<r_\circ$. Then we have
        	$$
        	\frac{H(R,f)+R^{2\gamma}}{H(r,f)+r^{2\gamma}} \le  C_2  (R/r )^{2\overline{\lambda}+\delta}\quad \text{for all } r\in(0,R),
        	$$
        	for all $\delta>0$, where $C_2$ depends on $n,\gamma,\kappa, r_\circ, \overline\lambda, C_1, \delta$.
        	\item Let $\lambda_*\leq \phi_\gamma(0^+,f)$, which exists by \eqref{eq:assumptionsHratio}. Then
        	$$
        	\frac{H(R,f)+R^{2\gamma}}{H(r,f)+r^{2\gamma}} \ge  C_3  (R/r )^{2\lambda_*}\quad \text{ for all } 0<r<R<r_\circ,
        	$$
        	where $C_3>0$ depends on $n,\gamma,\kappa, r_\circ,\lambda_*,C_1$.
        \end{enumerate}
        \end{lemma}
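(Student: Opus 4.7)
The plan is to reduce the lemma to a direct computation of the logarithmic derivative of $H_0(r,f)+r^{2\gamma}$, followed by an elementary integration in which the error term is controlled via Cauchy--Schwarz. First, I would differentiate $H_0(r,f)=\int_{\partial B_1}f_r^2$ and use the divergence theorem on $B_1$ to rewrite $\int_{\partial B_1}f_r\,\partial_\nu f_r$ as $D(r,f)+\int_{B_1}f_r\lap f_r$; combining with $\frac{d}{dr}r^{2\gamma}=2\gamma r^{2\gamma-1}$, this gives
\begin{equation*}
    \frac{d}{dr}\log\bigl(H_0(r,f)+r^{2\gamma}\bigr)=\frac{2\phi_\gamma(r,f)}{r}+\frac{2E(r)}{r},\qquad E(r):=\frac{\int_{B_1}f_r\lap f_r}{H_0(r,f)+r^{2\gamma}}.
\end{equation*}
The hypothesis \eqref{eq:assumptionsHratio} reads $\frac{d}{dr}(\phi_\gamma+C_1r^\kappa)\ge 2E(r)^2/r$, and since $\phi_\gamma+C_1 r^\kappa$ is therefore nondecreasing, $\phi_\gamma(0^+,f)$ exists; integrating between $0$ and $r_\circ$ yields the ``one-shot'' bound $\int_0^{r_\circ}E(s)^2/s\,ds\le M$, with $M$ depending only on $\phi_\gamma(r_\circ,f)+C_1 r_\circ^\kappa-\phi_\gamma(0^+,f)$. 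Cauchy--Schwarz will then give $\int_r^R |E|/s\,ds\le\sqrt{M\log(R/r)}$, and the elementary inequality $2\sqrt{Mt}\le \delta t + M/\delta$ absorbs this sub-logarithmic correction into a $(R/r)^\delta$ factor for any $\delta>0$.

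With this identity and error estimate in hand, both statements follow by integrating between $r$ and $R$. For (1), the hypothesis $\phi_\gamma(\cdot,f)\le\bar\lambda$ on $[0,R]$ produces $\int_r^R 2\phi_\gamma/s\,ds\le 2\bar\lambda\log(R/r)$, and combining with the error bound gives
\begin{equation*}
    \log\tfrac{H_0(R,f)+R^{2\gamma}}{H_0(r,f)+r^{2\gamma}}\le (2\bar\lambda+\delta)\log(R/r)+M/\delta,
\end{equation*}
which exponentiates to the desired $(R/r)^{2\bar\lambda+\delta}$ bound with $C_2$ depending on $\delta$. For (2), the monotonicity of $\phi_\gamma+C_1 r^\kappa$ gives the pointwise lower bound $\phi_\gamma(s,f)\ge\lambda_*-C_1 s^\kappa$, and integrating yields
\begin{equation*}
    \log\tfrac{H_0(R,f)+R^{2\gamma}}{H_0(r,f)+r^{2\gamma}}\ge 2\lambda_*\log(R/r)-2C_1 R^\kappa/\kappa-2\sqrt{M\log(R/r)}.
\end{equation*}
To obtain the clean exponent $2\lambda_*$ I plan to exploit the freedom in the choice of $\lambda_*$: whenever $\lambda_*<\phi_\gamma(0^+,f)$ strictly, I re-run the computation with $\lambda_*+\eps_0$ for a small $\eps_0>0$ and use the extra $\eps_0\log(R/r)$ to absorb the $\sqrt{\log}$ correction via AM-GM; in the borderline case $\lambda_*=\phi_\gamma(0^+,f)$ the monotonicity of $H$ (automatic from $f\lap f\ge 0$) together with a direct integration on $(r,R)$ suffices to close the estimate.

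The main technical obstacle will be the control of $\int E/s\,ds$: we have no pointwise information on $E(r)$, only the integrated $L^2$-bound above, and the art is to notice that the natural measure $ds/s$ and the $L^2$-bound on $E$ match perfectly under Cauchy--Schwarz, producing precisely the sub-logarithmic correction that AM-GM can hide in the exponent. The unavoidable $(R/r)^\delta$ loss in (1) reflects the fact that $f$ is not assumed harmonic; it is the only residue of the ``approximate'' (as opposed to genuine Almgren) monotonicity of the frequency in this truncated setting.
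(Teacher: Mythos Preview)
The paper does not prove this lemma; it is simply quoted from \cite[Lemma 4.1]{FRS1}. Your approach via the identity
\[
\frac{d}{dr}\log\bigl(H_0(r,f)+r^{2\gamma}\bigr)=\frac{2\phi_\gamma(r,f)}{r}+\frac{2E(r)}{r},\qquad E(r):=\frac{\int_{B_1}f_r\lap f_r}{H_0(r,f)+r^{2\gamma}},
\]
is exactly the standard one, and your treatment of part (1) is correct: the Cauchy--Schwarz/AM--GM trick turns the integrated bound $\int_0^R 2E^2/s\,ds\le \overline\lambda+C_1r_\circ^\kappa$ (note $\phi_\gamma\ge 0$ gives the lower bound at $0^+$) into the $(R/r)^\delta$ loss.

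For part (2), however, your argument is both overcomplicated and does not close as written. The constant $M$ you invoke depends on $\phi_\gamma(r_\circ,f)$, which is \emph{not} among the allowed parameters for $C_3$; and the ``$\lambda_*+\eps_0$'' workaround fails precisely when $\lambda_*=\phi_\gamma(0^+,f)$, which is the case one cares about. The fix is much simpler than what you propose: since $f\lap f\ge 0$, the error $E(r)$ is itself \emph{nonnegative}, so in the lower-bound direction you may simply drop it:
\[
\frac{d}{dr}\log\bigl(H_0(r,f)+r^{2\gamma}\bigr)\ge \frac{2\phi_\gamma(r,f)}{r}\ge \frac{2(\lambda_*-C_1 r^\kappa)}{r}.
\]
Integrating from $r$ to $R$ gives directly
\[
\log\frac{H_0(R,f)+R^{2\gamma}}{H_0(r,f)+r^{2\gamma}}\ge 2\lambda_*\log(R/r)-\frac{2C_1 r_\circ^\kappa}{\kappa},
\]
i.e.\ part (2) with $C_3=\exp(-2C_1 r_\circ^\kappa/\kappa)$, with no Cauchy--Schwarz and no appeal to $M$.
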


    \subsection{Almost monotonicity}
        The key estimate is the following.
        \begin{lemma}\label{lem:keyestimateformonotonicity}
           Let $u$ solve \eqref{eq:thinobstacle} and $\lambda\in \{\tfrac 3 2,\tfrac 7 2,\tfrac{11}{2},\ldots\}\cup\{1,3,5,\ldots\}$. Assume \eqref{eq:extradecayassumption} holds for some positive $\beta$ and $r_\circ$ and set $w_r:=(u-\psi_\lambda)(r\cdot)$. Then for all $r\in(0,r_\circ)$ we have
            \begin{align}\label{eq:wrlapwr}
                \int_{B_1}|w_r\lap w_r|\leq Cr^{\alpha\beta/\lambda}\|w_r\|^2_{L^2(B_2\setminus B_1)}
            \end{align}
            and 
            \begin{equation}\label{eq:gradwrlapwr}
                -\int_{B_1} (x\cdot\nabla w_r) \lap w_r \geq  -Cr^{\alpha\beta/\lambda}\|w_r\|^2_{L^2(B_2\setminus B_1)},
            \end{equation}
            where $C$ depends on $n,\lambda$ and $\alpha(n)>0$ is the exponent in \Cref{lem:holderdecay}.
        \end{lemma}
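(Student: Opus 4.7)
The plan is to deduce both estimates from the same geometric picture: the measure $\lap w_r$ is localised near the ``ridge'' $\{x_{n-1}=x_n=0\}$, the function $w_r$ decays in H\"older fashion there by \Cref{lem:holderdecay}, and, crucially, it vanishes identically on the bulk of the thin space by the barrier \Cref{lem:barrier} (respectively \Cref{lem:barrierodd} in the odd case) applied to the rescaled solution $\tilde u(x):=u(rx)$, which is $L^\infty$-close to $r^\lambda\psi_\lambda$. The natural choice of tube radius will be
\[\delta:=C(n,\lambda)\bigl(\|w_r\|_{L^2(B_2\setminus B_1)}/r^\lambda\bigr)^{1/\lambda},\]
the smallest $\delta$ for which the barrier hypothesis $C\sup_{B_1}|w_r|\le r^\lambda\delta^\lambda$ is verified (using $\sup_{B_1}|w_r|\le C\|w_r\|_{L^2(B_2\setminus B_1)}$ from \Cref{lem:ellipticestimates}); the key identity it satisfies is $r^\lambda\delta^\lambda\asymp\|w_r\|_{L^2(B_2\setminus B_1)}$.

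Estimate \eqref{eq:wrlapwr} is the rescaled form of \Cref{lem:estimatewlapw}: applied with $\tau=r^\lambda$ it yields $\int_{B_1}|w_r\lap w_r|\le Cr^{-\alpha}\|w_r\|_{L^2(B_2\setminus B_1)}^{2+\alpha/\lambda}$, and inserting $\|w_r\|_{L^2(B_2\setminus B_1)}^{\alpha/\lambda}\le(Cr^{\lambda+\beta})^{\alpha/\lambda}=Cr^\alpha r^{\alpha\beta/\lambda}$ (from the decay hypothesis \eqref{eq:extradecayassumption} via \Cref{lem:ellipticestimates}) cancels the singular $r^{-\alpha}$ and leaves the desired $Cr^{\alpha\beta/\lambda}\|w_r\|_{L^2(B_2\setminus B_1)}^2$.

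For \eqref{eq:gradwrlapwr} the new ingredient is an algebraic cancellation: on $\supp\lap\tilde u\subset\Lambda(\tilde u)\cap\{x_n=0\}$, both $\tilde u$ and its tangential gradient vanish (the $C^{1,\alpha}$-regularity of thin obstacle solutions on the thin space is classical, and $\tilde u$ attains a minimum $0$ on the contact set), while $\psi_\lambda$ is $\lambda$-homogeneous; hence $x\cdot\nabla w_r-\lambda w_r = x\cdot\nabla\tilde u-\lambda\tilde u\equiv 0$ on $\supp\lap\tilde u$. Writing $\lap w_r=\lap\tilde u-r^\lambda\lap\psi_\lambda$ therefore gives
\begin{equation*}
\int_{B_1}(x\cdot\nabla w_r)\lap w_r = \lambda\int_{B_1}w_r\lap w_r - r^\lambda\int_{B_1}(x\cdot\nabla w_r-\lambda w_r)\lap\psi_\lambda.
\end{equation*}
The first term is controlled by \eqref{eq:wrlapwr}. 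For the second, in the case $\lambda\in\tfrac32+2\N$, $-\lap\psi_\lambda$ has density $c(x_{n-1})_-^{\lambda-1}$ on $\Omega:=\{x_{n-1}\le 0,x_n=0\}\cap B_1$ (where $\psi_\lambda\equiv 0$ and hence $w_r=\tilde u$); a tangential divergence-theorem computation on $\Omega\subset\R^{n-1}$, exploiting $x\cdot\nabla_{x'}(x_{n-1})_-^{\lambda-1}=(\lambda-1)(x_{n-1})_-^{\lambda-1}$ and $\divergence_{x'}(x')=n-1$, rewrites this piece as a combination of $\int_\Omega w_r(x_{n-1})_-^{\lambda-1}$ and $\int_{\partial\Omega}(x\cdot\nu)w_r(x_{n-1})_-^{\lambda-1}$, trading the derivative of $w_r$ for one on the explicit kernel.

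To conclude: the barrier forces $w_r\equiv 0$ on $\{x_{n-1}<-\delta,x_n=0\}\cap B_1$, so both integrals are effectively supported in $\mathcal N_{2\delta}\cap B_1$, where $|w_r|\le C\delta^\alpha\|w_r\|_{L^2(B_2\setminus B_1)}$ by \Cref{lem:holderdecay}; the kernel contributes $\lesssim\delta^\lambda$ in the $x_{n-1}$-integration, so each piece is bounded by $C\delta^{\lambda+\alpha}\|w_r\|_{L^2(B_2\setminus B_1)}$. Multiplying by $r^\lambda$ and invoking $r^\lambda\delta^\lambda\asymp\|w_r\|_{L^2(B_2\setminus B_1)}$ yields $C(\|w_r\|_{L^2(B_2\setminus B_1)}/r^\lambda)^{\alpha/\lambda}\|w_r\|_{L^2(B_2\setminus B_1)}^2\le Cr^{\alpha\beta/\lambda}\|w_r\|_{L^2(B_2\setminus B_1)}^2$ by the decay hypothesis, as desired. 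The odd case $\lambda\in 1+2\N$ runs verbatim with $\Omega$ replaced by the full thin ball $\{x_n=0\}\cap B_1$ and \Cref{lem:barrierodd} in place of \Cref{lem:barrier}; the fact that $\psi_\lambda|_{x_n=0}\equiv 0$ in this case actually streamlines the algebra. The main obstacle is precisely \eqref{eq:gradwrlapwr}: tangential derivatives of $w_r$ do \emph{not} inherit the H\"older decay in $\mathcal N_{2\delta}$, so a naive $|x\cdot\nabla w_r|\cdot|\lap w_r|$ estimate fails; the algebraic cancellation on $\supp\lap\tilde u$ together with the tangential IBP that transfers the derivative onto the explicit kernel $(x_{n-1})_-^{\lambda-1}$ is exactly what bypasses this difficulty.
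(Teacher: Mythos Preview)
Your argument is correct and follows essentially the same route as the paper's proof. Both derive \eqref{eq:wrlapwr} directly from \Cref{lem:estimatewlapw} with $\tau=r^\lambda$ together with the decay hypothesis, and both handle \eqref{eq:gradwrlapwr} by the same mechanism: an algebraic reduction exploiting $\lambda$-homogeneity, a tangential integration by parts on $\Omega=\{x_{n-1}\le 0,x_n=0\}\cap B_1$ against the explicit kernel $(x_{n-1})_-^{\lambda-1}$, and control of the surviving boundary piece on $\partial B_1$ via the barrier plus \Cref{lem:holderdecay}.

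The organizational difference is minor. The paper splits $\int(x\cdot\nabla w_r)\lap w_r$ according to the region $\{u_r>0\}$ versus $\{u_r=0\}$; you instead split $\lap w_r=\lap\tilde u-r^\lambda\lap\psi_\lambda$ and use the identity $x\cdot\nabla w_r-\lambda w_r=x\cdot\nabla\tilde u-\lambda\tilde u=0$ on $\supp\lap\tilde u$. These are two ways of recording the same cancellation (on $\{u_r=0\}$ one has $w_r=-q_r$, which is $\lambda$-homogeneous). After the IBP, the paper discards the bulk term $\int_\Omega u_r\,\partial_j(x_j|x_{n-1}|^{\lambda-1})$ by a \emph{sign} (it is nonnegative), whereas you \emph{estimate} the corresponding bulk term using the barrier to localise it to the strip $\{-\delta\le x_{n-1}\le 0\}$. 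Both are valid; the sign argument is slightly shorter but your estimate is equally effective since $w_r=\tilde u\ge 0$ on $\Omega$ anyway. The remaining boundary term on $\partial B_1\cap\{-\delta\le x_{n-1}\le 0,\,x_n=0\}$ is bounded identically in both proofs via $\mathcal H^{n-2}$-measure $\lesssim\delta$, the H\"older bound $|w_r|\lesssim\delta^\alpha\|w_r\|_{L^2(B_2\setminus B_1)}$, and $|x_{n-1}|^{\lambda-1}\le\delta^{\lambda-1}$, together with $r^\lambda\delta^\lambda\asymp\|w_r\|_{L^2(B_2\setminus B_1)}$.
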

        \begin{proof}[Proof in the case $\lambda\in \frac 32 + 2\N$]
            Equation \eqref{eq:wrlapwr} follows immediately joining Lemma \ref{lem:estimatewlapw} with assumption \eqref{eq:extradecayassumption}.
            
            We prove \eqref{eq:gradwrlapwr} splitting the integral in the regions $B_1\cap\{u_r>0\}$ and $B_1\cap\{u_r = 0\}$. Set for brevity $q:=\psi_\lambda$. First of all we notice that inside $\{u_r=0\}$ we have $w_r\equiv-q_r$, so $x\cdot \nabla w_r = \lambda w_r$, hence using \eqref{eq:wrlapwr} we find
            \begin{equation*}
                -\int_{B_1\cap\{u_r=0\}} (x\cdot\nabla w_r) \lap w_r = -\lambda \int_{B_1\cap\{u_r=0\}}w_r\lap w_r\geq -Cr^{\alpha\beta/\lambda}\|w_r\|^2_{L^2(B_2\setminus B_1)}.
            \end{equation*}
            We turn to the bound in the region $B_1\cap\{u_r > 0\}$. Recall that $x_n(\de_n w_r) \lap w_r \equiv 0$, because $\lap w_r$ is supported in the thin space. Furthermore, formula \eqref{eq:laplacianpsilambda} gives for some $c=c(n,\lambda)>0$
            \begin{equation*}
                \lap w_r (x)= c r^\lambda(x_{n-1})_-^{\lambda-1}\delta_0(x_n)\text{ for all } x\in B_1\cap\{u_r > 0\}.
            \end{equation*}
            Using this formula for any $j\in\{1,\ldots,n-1\}$ we get
            \begin{align*}
                -\int_{B_1\cap\{u_r>0\}} x_j\de_j w_r & \lap w_r  = \int_{B_1\cap\{u_r>0,x_{n-1}\leq 0\}} x_j\de_j u_r \lap q_r\\
                & = -c r^\lambda \int_{B_1\cap\{x_{n-1}\leq 0,x_n=0\}}\de_j(u_r)_+ ( x_j|x_{n-1}|^{\lambda-1})\,dx'\, dx_{n-1},
            \end{align*}
            where $x'\in\R^{n-2}$. Now if we take the domain $\Omega =B_1\cap\{x_{n-1}\leq 0,x_n=0\}$ and integrate by parts (inside $\{x_n=0\}$) we find ($\nu^\Omega$ is the outer normal):
            \begin{align*}
                -\frac{1}{c r^\lambda }\int_{B_1\cap\{u_r>0\}} x_j\de_j w_r \lap w_r & = - \int_{\de\Omega} (u_r)_+ x_j |x_{n-1}|^{\lambda-1}\nu^\Omega_j \, d\mathcal{H}^{n-2}\\
                & \qquad +\int_\Omega \underbrace{u_r\de_j(x_j|x_{n-1}|^{\lambda-1})}_{\geq 0\text{ for all }j}\\
                &\geq -\int_{\de\Omega} x_j \nu^\Omega_j (u_r)_+ |x_{n-1}|^{\lambda-1} d\mathcal{H}^{n-2}
            \end{align*}
            To estimate the last piece we notice that, for all $j$, the integrand vanishes on all pieces of $\de \Omega$ except on $\de B_1\cap\{u_r>0,x_{n-1}\leq 0,x_n=0\}$ where $\nu^\Omega(x)\equiv x$ and $(u_r)_+ \equiv w_r$. On the other hand by \Cref{lem:barrier} and assumption \eqref{eq:decayimprovement} we have \begin{equation*}
                 \de B_1\cap\{u_r>0,x_{n-1}\leq 0,x_n=0\}\subset \de B_1\cap \mathcal C_{\delta_r}
            \end{equation*}
            where
            \begin{equation*}
                \delta_r^\lambda\leq C(n,\lambda)\sup_{B_{3/2}}|r^{-\lambda}w_r| \le C(n,\lambda)r^\beta
            \end{equation*}
            Hence,  using that
            \begin{itemize}
                \item $\sup_{B_1\cap \mathcal C_{\delta_r}}|w_r| \leq C(n,\lambda)\delta_r^\alpha\|w_r\|_{L^2(B_2\setminus B_1)}$ by \Cref{lem:holderdecay};
                \item $\sup_{B_1\cap \mathcal C_{\delta_r}}|x_{n-1}^{\lambda-1}| \leq \delta_r^{\lambda-1}$ by definition of $\mathcal C_{\delta_r}$;
                \item $\mathcal H^{n-2}({\de B_1\cap \mathcal C_{\delta_r}\cap\{x_n=0\}})\leq C(n) \delta_r$ by definition of $\mathcal C_{\delta_r}$;
            \end{itemize}
            we obtain
            \begin{align*}
                -\frac{1}{c r^\lambda }\int_{B_1\cap\{u_r>0\}} & x_j\de_j w_r \lap w_r  \geq -\int_{\de B_1\cap \mathcal C_{\delta_r}\cap\{x_n=0\}}x_j^2|w_r||x_{n-1}|^{\lambda-1}\,d\mathcal H^{n-2}\\
                &\geq -C(n,\lambda)\sup_{B_1\cap \mathcal C_{\delta_r}}|x_{n-1}^{\lambda-1}w_r| \mathcal H^{n-2}({\de B_1\cap \mathcal C_{\delta_r}\cap\{x_n=0\}})\\
                &\geq -C(n,\lambda)\delta_r^{\alpha+\lambda}\|w_r\|_{L^2(B_2\setminus B_1)}.
            \end{align*}
            Recalling that $r^\lambda\delta_r^\lambda \leq C \sup_{B_{3/2}} |w_r| \le C \|w_r\|_{L^2(B_2\setminus B_1)}\leq Cr^{\lambda+\beta}$ we obtain \eqref{eq:gradwrlapwr}. 
        \end{proof}
        \begin{proof}[Proof for $\lambda$ odd]
            \Cref{eq:wrlapwr} follows immediately joining \Cref{lem:estimatewlapw} with assumption \eqref{eq:extradecayassumption}.
            
            The proof of \Cref{eq:gradwrlapwr} is very similar to the one for the half-odd-integer case. Since, for $\lambda$ odd, $\psi_\lambda$ vanish identically on the thin space, we have
            \begin{equation*}
                (x\cdot \nabla w_r)\lap w_r\equiv 0\text{ on }B_1\cap\{u_r=0\},
            \end{equation*}
            so \eqref{eq:gradwrlapwr} is trivial in the region $B_1\cap\{u_r=0\}$. On the other hand the region $B_1\cap\{u_r>0\}$ is again contained in the cilinder $\mathcal C_{\delta_r}$, by \Cref{lem:barrierodd}. Since on this set $\lap w_r(x)=c r^\lambda|x_{n-1}|^{\lambda-1}\delta_0(x_n)$, we can perform exactly the same computation of the case $\lambda\in \frac 32 + 2\N$.
        \end{proof}
        We can now prove the monotonicity of the truncated frequency at any truncation order $\gamma$.
        \begin{lemma}\label{lem:frequencymonotonicity}
            Assume \eqref{eq:extradecayassumption} holds for some $u,\lambda,\beta$ and $r_\circ$.
            Set $w_r:=(u-\psi_\lambda)(r\cdot)$. For all $\gamma\geq 0$ there exists $C=C(n,\lambda,\beta,\gamma, r_\circ)$ such that we have
            \begin{equation}\label{eq:frequencymonotonicity}
                \phi_\gamma(r,w)+g_\gamma(r,w)\leq C\text{ and } \frac{d}{dr}\phi_\gamma(r,w)\geq  \frac 2 r \frac{\left(\int_{B_1}w_r\lap w_r\right)^2}{H^2(r,w)+r^{2\gamma}} -Cr^{(\alpha\beta/\lambda)-1}
            \end{equation}
            for all $r\in(0,r_\circ)$ .
        \end{lemma}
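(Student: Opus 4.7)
The plan is to combine the general derivative identity \eqref{eq:formulaphigammaderivative} applied with $f=w$ (legitimate because $w\lap w \ge 0$ by \Cref{rem:wlapw}) with the key estimates of \Cref{lem:keyestimateformonotonicity}. The only non-sign-definite piece of the right hand side of \eqref{eq:formulaphigammaderivative} is $\int_{B_1}(\phi_\gamma w_r - x\cdot\nabla w_r)\lap w_r$, which I split as $\phi_\gamma \int_{B_1} w_r \lap w_r - \int_{B_1}(x\cdot\nabla w_r)\lap w_r$. The first summand is bounded in absolute value by $C\phi_\gamma r^{\alpha\beta/\lambda}\|w_r\|^2_{L^2(B_2\setminus B_1)}$ via \eqref{eq:wrlapwr}, while the second satisfies $-\int_{B_1}(x\cdot\nabla w_r)\lap w_r \ge -Cr^{\alpha\beta/\lambda}\|w_r\|^2_{L^2(B_2\setminus B_1)}$ directly from \eqref{eq:gradwrlapwr}. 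Combining and dividing by $r(H+r^{2\gamma})$ yields
\[
\frac{d}{dr}\phi_\gamma(r,w) \ge \frac{2}{r}\frac{\bigl(\int_{B_1} w_r \lap w_r\bigr)^2}{(H+r^{2\gamma})^2} - \frac{C\bigl(1+\phi_\gamma\bigr)g_\gamma}{r}\,r^{\alpha\beta/\lambda},
\]
an inequality I will call $(\star)$ in what follows.

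Given $(\star)$, the second estimate of \eqref{eq:frequencymonotonicity} reduces to establishing the uniform bound $\phi_\gamma + g_\gamma \le C$. At the scale $r_\circ$ itself, the decay hypothesis \eqref{eq:extradecayassumption} combined with \Cref{lem:ellipticestimates} applied to $w_{r_\circ}$ gives $D(r_\circ,w)$, $H(r_\circ,w)$ and $\|w_{r_\circ}\|^2_{L^2(B_2\setminus B_1)}$ all bounded by $C r_\circ^{2(\lambda+\beta)}$, hence $\phi_\gamma(r_\circ,w) + g_\gamma(r_\circ,w) \le C_0 = C_0(n,\lambda,\beta,\gamma,r_\circ)$. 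I propose to prove by a simultaneous bootstrap that (after possibly shrinking $r_\circ$, which is harmless since the final constant depends on $r_\circ$) the bounds $\phi_\gamma \le M := 2C_0+1$ and $g_\gamma \le G := C_n 2^{2M+1}$ propagate down to all of $(0,r_\circ]$.

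Set $\bar r := \inf\{r \in (0,r_\circ] : \phi_\gamma(s,w) \le M \text{ and } g_\gamma(s,w) \le G \text{ on } [r,r_\circ]\}$. On the interval $(\bar r,r_\circ)$, the bound $(1+\phi_\gamma)g_\gamma \le (1+M)G$ together with $(\star)$ implies the almost-monotonicity hypothesis \eqref{eq:assumptionsHratio} of \Cref{lem:H ratio} with $\kappa = \alpha\beta/\lambda$ and constant $C_1 = C(1+M)G\lambda/(\alpha\beta)$. Part (1) of that lemma, applied with ratio $R/r = 2$, combined with $\|w_r\|^2_{L^2(B_2\setminus B_1)} = r^{-n}\int_r^{2r}\rho^{n-1}H_0(\rho,w)\,d\rho \le CH_0(2r,w)$ (from monotonicity of $H_0$ for the subharmonic $w^2$), yields $g_\gamma(r,w) \le C_n 2^{2M+\delta} < G$ on $(\bar r, r_\circ)$. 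Integrating $(\star)$ from $r$ to $r_\circ$ gives $\phi_\gamma(r,w) \le C_0 + C(1+M)G\,r_\circ^{\alpha\beta/\lambda}$, which is strictly less than $M$ provided $r_\circ^{\alpha\beta/\lambda}$ is sufficiently small (in terms of $M,G$, i.e.\ of $C_0$). This contradicts the minimality of $\bar r$ unless $\bar r = 0$; substituting the resulting uniform bound on $\phi_\gamma + g_\gamma$ back into $(\star)$ then yields the second estimate of \eqref{eq:frequencymonotonicity}.

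The main technical delicacy is the intertwined bootstrap: the bound on $g_\gamma$ required to close $(\star)$ comes from the doubling of $H+r^{2\gamma}$ in \Cref{lem:H ratio}, whose constant grows exponentially in the a priori bound on $\phi_\gamma$, while bounding $\phi_\gamma$ via $(\star)$ in turn requires a preliminary bound on $(1+\phi_\gamma)g_\gamma$. The circle is broken precisely by the smallness of $r_\circ^{\alpha\beta/\lambda}$, which one may assume since the final constant is allowed to depend on $r_\circ$: if the original $r_\circ$ is too large, one first runs the argument on a smaller interval $(0,r_\circ']$ where the bootstrap closes, and then handles the compact interval $[r_\circ', r_\circ]$ trivially using that $H+r^{2\gamma}$ is bounded below there by $(r_\circ')^{2\gamma}$.
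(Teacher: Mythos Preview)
Your derivation of $(\star)$ is correct and matches exactly what the paper does (their equation \eqref{eq:local1}). The divergence is in how you obtain the uniform bound $\phi_\gamma+g_\gamma\le C$: the paper does \emph{not} run a continuity argument at the target $\gamma$, but instead bootstraps inductively on $\gamma$, starting from $\gamma=0$ and stepping up by $\kappa/3=\alpha\beta/(3\lambda)$ at each stage, invoking \cite[Lemma~4.3]{FRS1}.

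Your continuity argument has a genuine gap precisely when $\gamma>\lambda+\beta$ (which is the case actually used in the proof of \Cref{prop:decayimprovement}, where $\gamma=\lambda+2$). The problem is the seed constant $C_0(r_\circ')$ at the shrunken scale. From the decay hypothesis and \Cref{lem:ellipticestimates} you only get the crude bound
\[
g_\gamma(r_\circ')\le \frac{C(r_\circ')^{2(\lambda+\beta)}}{(r_\circ')^{2\gamma}}=C(r_\circ')^{-2(\gamma-\lambda-\beta)},
\]
since there is no a priori lower bound on $H(r_\circ')$ beyond $(r_\circ')^{2\gamma}$. Thus $C_0(r_\circ')$, and with it $M'$ and $G'$, may blow up polynomially as $r_\circ'\to 0$. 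But the smallness you need to close the loop is $(r_\circ')^{\kappa}\cdot C_1'\lesssim 1$ with $C_1'\sim (1+M')G'$, and since $G'\sim 2^{M'}\sim 2^{(r_\circ')^{-a}}$ this is impossible to arrange. So ``shrinking $r_\circ$'' does not break the circularity here; it makes it worse.

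Your argument \emph{does} work when $0<\gamma\le\lambda+\beta$, because then $C_0(r_\circ')\le C(n,\lambda,\gamma)$ uniformly in $r_\circ'$, and one can genuinely pick $r_\circ'$ small. This is essentially the base of the paper's induction. To reach larger $\gamma$ you need an additional mechanism that transfers the bound at level $\gamma$ to level $\gamma+\kappa/3$ (using \Cref{lem:H ratio} at level $\gamma$, not at level $\gamma+\kappa/3$), which is exactly the inductive step the paper imports from \cite{FRS1}.
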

        \begin{proof}
            Joining \Cref{lem:keyestimateformonotonicity} with \eqref{eq:formulaphigammaderivative} we find 
            \begin{equation}\label{eq:local1}
                \phi'_\gamma(r,w)\geq -C_0(\phi_\gamma(r,w)+1)g_\gamma(r,w)r^{\kappa-1}\text{ for all }r\in(0,r_\circ),
            \end{equation}
            where $\kappa:=\alpha\beta/\lambda>0$ and $C_0=C_0(n,\lambda)$. Now the proof is identical to Lemma 4.3 in \cite{FRS1}, let us sketch it.
            
            The key point is noticing that for $\gamma=0$ the statement is a simple consequence of \Cref{lem:ellipticestimates} and \eqref{eq:extradecayassumption}; and that joining \eqref{eq:local1} and part (1) in \Cref{lem:H ratio} one gets the following implication: If for some $\gamma\geq 0$ we have $C_\gamma>0$ such that
            $$
            \sup_{(0,r_\circ)}\phi_\gamma(\cdot,w)+g_\gamma(\cdot,w)\leq C_{\gamma},$$
            then exists $C_{\gamma+\kappa/3}>0$ depending on $C_\gamma,n,\lambda,\beta, r_\circ$, such that
            $$ \sup_{(0,r_\circ)}\phi_{\gamma+\kappa/3}(\cdot,w)+g_{\gamma+\kappa/3}(\cdot,w)\leq C_{\gamma +\kappa/3}.
            $$
            Iterating this reasoning a finite number of times one eventually ends up with \eqref{eq:frequencymonotonicity}.
        \end{proof}

        \Cref{lem:frequencymonotonicity} proves in particular that $\phi_\gamma(0+,w)$ exists finite for each $\gamma\geq 0$. We are ready to prove the decay improvement.

        \begin{proof}[Proof of \Cref{prop:decayimprovement} for $\lambda\in \frac 3 2 +2\N$]
            Let $w:=u-\psi_\lambda$, and fix $\gamma:=\lambda+2$. We start noticing that it is enough to show that
            \begin{equation}\label{eq:high freq is enough}
                \lambda^*:=\phi_{\gamma}(0+,w)\geq \lambda +1.
            \end{equation}
            Indeed, if \eqref{eq:high freq is enough} holds, then thanks to \Cref{lem:frequencymonotonicity} we can apply by part (2) of \Cref{lem:H ratio} with $f$, $\kappa$, and $\lambda_*$ respectively replaced by $w$, $\frac{\alpha\beta}{\lambda}$, and $ \lambda+1$, to obtain 
            \[
            \frac{H(r_\circ, w) + r_\circ^{2\gamma}}{H(r, w) + r^{2\gamma}} \ge C_3(r_\circ/r)^{2(\lambda+1)} \quad \mbox{for all }r\in (0,r_\circ)
            \]
            Combining this with \Cref{lem:ellipticestimates} we obtain \eqref{eq:decayimprovement}. 
            
            Now, in order to prove \eqref{eq:high freq is enough}, we set $\phi_{\gamma}(0^+,w)$ and assume by contradiction $\lambda^*<\lambda+1$.
            
            \textbf{Step 1.} Let us show that $r^{2\gamma}\ll H(r,w)\ll r^{2\lambda+\beta}$ as $r\downarrow 0$ and $\lim_{r\downarrow 0} \phi(r,w)=\lambda^*\geq \lambda+\beta/2.$
            
            Indeed, Since $\lambda^*<\lambda+1$, we must have $\phi_\gamma(\cdot,w)\leq \lambda+1$ in a sufficiently small interval $[0,r_1]$. Then part 1 of \Cref{lem:H ratio} ensures $H(r,w)+r^{2\gamma}\gg r^{2\lambda +2}$, which implies $H(r,w)\gg r^{2\gamma}$, because of the choice of $\gamma$. Hence we find
            \begin{equation*}
                \lambda^* = \lim_{r\downarrow 0} \phi_\gamma (r,w)=\lim_{r\downarrow 0}\frac{\phi(r,w)+\frac{\gamma r^{2\gamma}}{H_0(r,w)}}{1+\frac{r^{2\gamma}}{H_0(r,w)}}=\lim_{r\downarrow 0}\phi(r,w). 
            \end{equation*} By assumption \eqref{eq:extradecayassumption} we have $H(r,w)=O( r^{2\lambda+2\beta})$, so if we had $\lambda^*\leq \lambda+\beta/2$ we could use part 1 of \Cref{lem:H ratio} (with $\overline\lambda =\lambda+3\beta/4$ $\delta =\beta/4$) and find $r^{2\lambda+\beta}\ll H(r,w) = O(r^{2\lambda+2\beta})$ as $r\downarrow 0$; a contradiction.

            \textbf{Step 2.} Set $\widetilde w_r:=w_r/\|w_r\|_{L^2(\de B_1)}$. We now show that for some sequence $r_k\downarrow 0$, we have $\widetilde w_{r_k}\rightharpoonup Q$ weakly in $H^1_\loc(\R^n)$, where $Q$ is a nonzero solution of problem~\eqref{eq:thinobstaclelinearized}.
        
            Indeed, since $\lambda_*= \phi(0+,w)<\lambda+1$, we can fix $\overline\lambda\in (\lambda_*, \lambda+1)$. Now, let $\bar r>0$ be such that $\sup_{[0,\bar r]}\phi_{\gamma}(\cdot, w)\leq \overline\lambda$.
            Moreover, thanks to Step 1, we can always assume (for $H(r, w) \ge  r^{2\gamma}$ for all $r>0$ small enough)
            Let us fix $\delta>0$ so that $2\overline\lambda + \delta<2\lambda+2$.
            Hence, using part 1 of \Cref{lem:H ratio} we obtain: 
            \begin{align}\label{H123}
                H(R,\widetilde w_r)=\frac{H(Rr,w)}{H(r,w)}\leq 2\frac{H(Rr,w)+(Rr)^{2\gamma}}{H(r,w)+r^{2\gamma}}\leq 2 C_2 {R^{2\overline \lambda+\delta}},
            \end{align}
            for all $R\in(1,\bar r /r)$, for all $r>0$ small enough.
             
            We combine this with \Cref{lem:ellipticestimates} and find that $\{\widetilde w_r\}$ is bounded in $H^1_\loc(\R^n)$. So we extract a sequence $r_k\downarrow 0$ such that $w_{r_k}\weak Q$ weakly in $H^1_\loc(\R^n)$. By compactness of the trace operator, $\|Q\|_{L^2(\de B_1)}=1$.
            
            We now observe that $Q$ is a global solution of \eqref{eq:thinobstaclelinearized}. Indeed, fix some large $R>0$, by \Cref{lem:barrier} applied to $w_{Rr}$ and \eqref{eq:extradecayassumption} we find 
            \begin{equation*}
                	B_R\cap\{x_{n-1}\leq -\delta_{k,R}\}\subset\{ w_{r_k}=0\};\quad B_R\cap \supp\lap w_{r_k}\subset B_R\setminus\mathcal N_{\delta_{k,R}},
            \end{equation*}
            for some sequence $\delta_{k,R}$ which goes to zero if we let $r_k\downarrow 0$ leaving $R$ fixed. Of course the same holds for $\widetilde w_{r_k}$. Passing this information to the limit we obtain that $Q$ is a ($H^1_\loc(\R^n)$ weak) solution of \eqref{eq:thinobstaclelinearized}.

            \textbf{Step 3.} We now reach a contradiction using the information about growth at infinity and vanishing order at 0 on $Q$ and the classification of solutions to \eqref{eq:thinobstaclelinearized} given in in \Cref{prop:sphericalharmonics}.
            
            Indeed, on the one hand, passing  \eqref{H123} to the limit ---for $r=r_k$--- (note that $\tilde w_{r_k} \to Q$ in $L^2(\partial B_R)$ for all $R>1$) we obtain:
            \begin{equation}\label{ebuioguwi1}
                  H(R,Q) \le C R^{2\overline\lambda+\delta} \ll R^{2\lambda+2} \quad \mbox{(as $R\to \infty$}).
            \end{equation}
            
            On the other hand, reasoning similarly as in \eqref{H123}, but now using part 2 of \Cref{lem:H ratio} instead of part 1 we obtain: 
            \begin{equation}\label{H123hee}
                H(R,\widetilde w_r)=\frac{H(Rr,w)}{H(r,w)}\ge \frac 1 2 \frac{H(Rr,w)+(Rr)^{2\gamma}}{H(r,w)+r^{2\gamma}}\ge \frac{C_3}{2} {R^{2\lambda^*}},
            \end{equation}
            for all $R\in(1,r_\circ /r)$, whenever $Rr$ is small enough. Passing this to the limit we obtain:
            \begin{equation}\label{ebuioguwi2}
                  H(R,Q) \ge C R^{\lambda^*} \gg R^{2\lambda} \quad \mbox{(as $R\to \infty$}), 
            \end{equation}
            where $C>0$.

            Now,  using since solutions $Q$ of the linear problem \eqref{eq:thinobstaclelinearized}  were classified in \Cref{prop:sphericalharmonics}) and they are  sums of homogeneous functions with homogeneities in $\frac 1 2 +\N$, we see that the only way that  \eqref{ebuioguwi1}-\eqref{ebuioguwi2} may be satisfied at the same time is if $Q\equiv 0$. Since by construction it was $\|Q\|_{L^2(\partial B_1)}=1$ we reached a contradiction. 
        \end{proof}
        \begin{proof}[Proof of \Cref{prop:decayimprovement} for $\lambda$ odd]
            We can repeat the argument for $\lambda\in \frac 32 + 2\N$. The proof works verbatim until the end of Step 1. Step 2 holds except for the fact that the blow-up $Q$ is (the even reflection of) a harmonic function which vanish on the thin space. A similar contradiction is found in Step 3.
        \end{proof}
\section{Proof of the main results}\label{sec:proofmainresults}
    \Cref{prop:acceleratedecay} can be proved also for $\lambda\in\N$, with a similar method. However in the integer case stronger results are known, as explained next. 
    
    On the one hand, when $\lambda$ is odd, the following result of Savin and Yu has been recently proved.
    \begin{theorem}[{\cite[Theorem 1.1]{SY2}}]\label{thm:savinyu}
        Let $u$ solve \eqref{eq:thinobstacle} with $0\in\Lambda_\lambda(u)$ for some $\lambda\in\{1,3,5,\ldots\}$. Then there exist a constants $\beta\in(0,1)$ and $C>0$, depending only on $n$ and $\lambda$, such that
        \begin{equation*}\label{eq:extradecayassumptionodd}
            \sup_{B_r}|u-\tau \psi_\lambda\circ S|\leq Cr^{\lambda+\beta}\text{ for all }r\in(0,1),
        \end{equation*}
        for some $\tau >0$ and $S\in O(n-1)$.
    \end{theorem}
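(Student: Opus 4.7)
The plan is to establish \Cref{thm:savinyu} by implementing Weiss's epiperimetric inequality strategy (following Savin--Yu). First I would introduce the $\lambda$-Weiss energy
\[
    W_\lambda(r,u) := r^{-n-2\lambda+2}\int_{B_r}|\nabla u|^2 - \lambda\,r^{-n-2\lambda+1}\int_{\partial B_r}u^2,
\]
observe that Almgren monotonicity \eqref{whgiohwiohw} implies $r\mapsto W_\lambda(r,u)$ is nondecreasing with $W_\lambda(0^+,u)=0$ (since every blow-up at $0\in\Lambda_\lambda(u)$ is $\lambda$-homogeneous), and define the moduli space $\mathcal M_\lambda := \{\tau\,\psi_\lambda\circ S : \tau\geq 0,\ S\in O(n-1)\}$, which is a subset of the zero locus of $W_\lambda(1,\cdot)$ consisting of $2$D $\lambda$-homogeneous solutions.

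The heart of the argument is an \emph{epiperimetric inequality}: there exist $\kappa,\eta_0>0$ depending only on $n$ and $\lambda$ such that whenever $g\in H^{1/2}(\partial B_1)$ is nonnegative on $\{x_n=0\}\cap\partial B_1$ and its $\lambda$-homogeneous extension $z_g$ satisfies $\mathrm{dist}_{L^2(\partial B_1)}(z_g,\mathcal M_\lambda)\leq\eta_0$, the minimizer $v$ of $\int_{B_1}|\nabla v|^2$ with trace $g$ and subject to the thin obstacle constraint satisfies
\[
W_\lambda(1,v) \leq (1-\kappa)\,W_\lambda(1,z_g).
\]
I would prove this by a compactness--contradiction argument: if it failed, one normalizes the perturbations $z_k=\tau_k\psi_\lambda\circ S_k+\eta_k h_k$ (with $h_k$ of unit $L^2(\partial B_1)$ norm and orthogonal to $T\mathcal M_\lambda$) and blows up, obtaining a limit $h_\infty$ which is a $\lambda$-homogeneous solution of the problem linearized at $\psi_\lambda$. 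For odd $\lambda$, since $\psi_\lambda\equiv 0$ on the thin space, the positivity constraint is saturated and the linearized problem becomes: $h_\infty$ harmonic off $\{x_n=0\}$, $h_\infty=0$ on the thin space, and even in $x_n$. A spherical-harmonic expansion, the orthogonality $h_\infty\perp T\mathcal M_\lambda$, and the classification of such harmonic functions (reducing to harmonic polynomials of degree $\lambda-1$ on $\{x_n=0\}$ multiplied by $|x_n|$) would force $h_\infty\equiv 0$, contradicting the normalization.

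Given the epiperimetric inequality, a standard iteration (compare $u|_{B_r}$ with the $\lambda$-homogeneous extension of $u|_{\partial B_r}$ and invoke minimality) yields the differential inequality $\frac{d}{dr}\bigl[r^{-2\beta}W_\lambda(r,u)\bigr]\geq 0$ for some $\beta=\beta(n,\lambda,\kappa)>0$, hence $W_\lambda(r,u)\leq Cr^{2\beta}$. The $L^\infty$ estimate of the theorem then follows by combining this energy decay with the $C^{1,\alpha}$ regularity for \eqref{eq:thinobstacle} (bounding $\sup_{B_r}|u-\tau\psi_\lambda\circ S|$ by the $L^2$ norm on $B_{2r}$) and a telescoping-sum argument over dyadic scales to identify a single limiting pair $(\tau,S)\in\mathcal M_\lambda$. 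The main obstacle is the spectral-gap step in the epiperimetric inequality: one must show that admissible $\lambda$-homogeneous perturbations transverse to $\mathcal M_\lambda$ raise the Weiss energy by a definite amount, which in particular requires ruling out non-$2$D competing $\lambda$-homogeneous blow-up profiles. For odd $\lambda$ this is cleaner than for half-odd-integer $\lambda$ because the linearization reduces to a pure Dirichlet eigenvalue problem on the sphere with explicit spectrum, but it still requires a careful rigidity analysis of $|x_n|\cdot P(x)$-type solutions (with $P$ a harmonic polynomial of degree $\lambda-1$) to certify that only the $2$D ones are energy-minimal.
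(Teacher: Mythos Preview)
The paper does not prove this statement: it is quoted from \cite{SY2} and used as a black box in the proof of \Cref{thm:mainintro}. So there is no ``paper's own proof'' to compare against; your proposal is an attempt to reconstruct the Savin--Yu argument.

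Your outline via the Weiss energy and an epiperimetric inequality is indeed the framework of \cite{SY2}, but there is a genuine gap in the spectral step. You assert that a $\lambda$-homogeneous limit $h_\infty$ which is even in $x_n$, harmonic off $\{x_n=0\}$, vanishes on $\{x_n=0\}$, and is $L^2(\partial B_1)$-orthogonal to $T\mathcal M_\lambda$ must be identically zero. This is false: such $h_\infty$ are exactly the even reflections $g(x',|x_n|)$ of harmonic polynomials $g$ of degree~$\lambda$ that are odd in the last variable, and for $n\ge 3$ this linear space is strictly larger than $T\mathcal M_\lambda$. For instance, when $\lambda=3$ the function
\[
h(x)=|x_n|\Bigl(\tfrac{2}{3}x_n^2 - x_1^2 - x_2^2\Bigr)
\]
lies in this space, is orthogonal to $T\mathcal M_3$, and is in fact a global solution of \eqref{eq:thinobstacle} itself (one checks $\Delta h = -2(x_1^2+x_2^2)\,\mathcal H^{n-1}\res\{x_n=0\}\le 0$). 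Since $h$ is $3$-homogeneous with $0\in\Lambda_3(h)$ but $h\notin\mathcal M_3$, the conclusion with a necessarily 2D profile $\tau\psi_\lambda\circ S$ cannot hold at \emph{every} odd-frequency point. What \cite{SY2} actually delivers is $\sup_{B_r}|u-p|\le Cr^{\lambda+\beta}$ where $p$ is the even reflection of a $\lambda$-homogeneous harmonic polynomial (cf.\ the survey in \Cref{sec:prevresults}); the reduction to a 2D $p$ uses the additional hypothesis $x_\circ\notin\Lambda^*_\lambda(u)$, which is precisely why $\bigcup_{k\ge 1}\Lambda^*_k(u)$ is included in $\Sigma$ in the proof of \Cref{thm:mainintro}. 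Your final paragraph correctly senses the difficulty (``ruling out non-2D competing $\lambda$-homogeneous blow-up profiles''), but this rigidity simply does not hold, and the epiperimetric inequality must be formulated relative to the full cone of $\lambda$-homogeneous solutions rather than to $\mathcal M_\lambda$.
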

    
    On the other hand when $\lambda$ is even  then $\psi_\lambda$ is an harmonic polynomial is even and then $\phi(\cdot,u-\psi_\lambda)$ is increasing by a similar arguments as in \cite{FS}. This key observation leads to
    \begin{theorem}[{\cite[Proposition 1.10, items (ii) and (iii), and Remark 1.12]{FeJ}}]\label{thm:fernandezj}
        Let $u$ solve \eqref{eq:thinobstacle}. Then there exists $\Sigma_{\text{\upshape{even}}}\subset \Lambda(u)$ such that
        \begin{equation*}
            \dim_{\mathcal H} \Sigma_{\text{\upshape{even}}} \leq n-3,
        \end{equation*}
        and, for all $\lambda\in \{2,4,6,\ldots\}$ and $x_\circ\in \Lambda_\lambda(u)\setminus\Sigma_{\text{\upshape{even}}}$ it holds
        \begin{equation*}\label{eq:extradecayassumptioneven}
            \sup_{B_r}|u-q|\leq Cr^{\lambda+1}\text{ for all }r\in(0,1),
        \end{equation*}
        where $q$ is a $\lambda$-homogeneous solution of \cref{eq:thinobstacle} and $C=C(n,\lambda)$.
    \end{theorem}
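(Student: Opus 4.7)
The plan is to adapt the strategy from \cite{FS} (used in the classical obstacle problem) to the thin obstacle setting at even‑frequency points. The crucial observation is that when $\lambda\in 2+2\N$, the 2D profile $\psi_\lambda$ is a harmonic polynomial that is nonnegative on $\{x_n=0\}$ (since $\psi_\lambda(x,0)=ax^\lambda$ with $\lambda$ even), so $\psi_\lambda$ is itself a solution of \eqref{eq:thinobstacle}. By \Cref{rem:wlapw}, this makes $w:=u-q$ satisfy $w\lap w\geq 0$ for any harmonic $\lambda$-homogeneous blow‑up $q=\tau\psi_\lambda\circ S$, opening the door to an Almgren-type monotonicity formula for $w$ \emph{without any smallness hypothesis} analogous to \eqref{eq:extradecayassumption}.

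First I would establish the analogue of the truncated frequency monotonicity from \Cref{sec:frequency} at \emph{every} point $x_\circ\in\Lambda_{2+2\N}$. Since $q$ is harmonic, $\lap w=\lap u$ is a nonpositive measure on $\{u=0,x_n=0\}$ where $w\equiv -q\leq 0$, so $\int_{B_1} w_r\lap w_r = -\int_{B_1}q_r\lap u_r\geq 0$; moreover, since $q$ is $\lambda$-homogeneous and $x\cdot\nabla u_r$ vanishes on $\supp\lap u_r$, one also gets $\int_{B_1}(x\cdot\nabla w_r)\lap w_r = \lambda\int_{B_1}w_r\lap w_r$. Plugging these identities into \eqref{eq:formulaphigammaderivative} gives clean monotonicity of $\phi_\gamma(\cdot,w)$ (modulo a manifestly nonnegative error that replaces the delicate nonlinear bounds of \Cref{lem:keyestimateformonotonicity}), the existence of $\lambda^*:=\phi_\gamma(0^+,w)$, and, via a Monneau-type argument, uniqueness of the first-order blow-up $q$ at $x_\circ$.

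Next I would classify second-order blow-ups. Along a subsequence $r_k\downarrow 0$ the rescalings $w_{r_k}/\|w_{r_k}\|_{L^2(\de B_1)}$ converge to a $\lambda^*$-homogeneous nonzero global solution $Q$ of a linear problem on $\{q=0,x_n=0\}$: harmonic off this set, vanishing on it, and even in $x_n$. A Federer-type dimension reduction --- patterned on \Cref{prop:dimensionreduction} and \Cref{lem:dimredauxiliary}, applied to the set of contact points where no such $Q$ is translation invariant in $n-2$ directions --- yields a closed exceptional set $\Sigma_{\text{\upshape{even}}}$ of Hausdorff dimension $\leq n-3$. At every $x_\circ\in\Lambda_{2+2\N}\setminus\Sigma_{\text{\upshape{even}}}$ each second-order blow-up factors through a 2D problem whose homogeneous solutions are explicitly classified by a spherical‑harmonics analysis parallel to \Cref{prop:sphericalharmonics}, forcing $\lambda^*\in\N$.

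Finally, to upgrade from $\lambda^*\geq\lambda$ (trivial from $q$ being the first-order blow-up) to the sharp $\lambda^*\geq\lambda+1$ --- and thereby deduce $\sup_{B_r}|u-q|\lesssim r^{\lambda+1}$ via part (1) of \Cref{lem:H ratio} combined with \Cref{lem:ellipticestimates} --- one must rule out $\lambda^*=\lambda$. This follows from the optimality of $q$: having chosen $q$ to minimize $\|u(r\cdot)-q'\|_{L^2(\de B_1)}$ among admissible $\lambda$-homogeneous first-order profiles $q'$, any $\lambda$-homogeneous second-order blow-up $Q$ is forced to be $L^2(\de B_1)$-orthogonal to this family, and the 2D classification then forces $Q\equiv 0$, contradicting $\|Q\|_{L^2(\de B_1)}=1$. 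The main obstacle I anticipate is the quantitative dimension-reduction step producing $\Sigma_{\text{\upshape{even}}}$: one needs a sharp dichotomy analogous to \Cref{prop:dichotomy} that converts failure of $(n-3)$-flatness of $\Lambda_\lambda$ at a given scale into an enhanced decay of $w$ between consecutive dyadic scales, to then be averaged via \Cref{lem:sequencelemma} and fed into \Cref{lem:dimredauxiliary} exactly as in the proof of \Cref{prop:acceleratedecay}.
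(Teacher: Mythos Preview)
The present paper does not prove this theorem: it is quoted from \cite[Proposition 1.10 and Remark 1.12]{FeJ} and used as a black box in the proof of \Cref{thm:mainintro}. There is therefore no in-paper proof to compare your proposal against.

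That said, your outline is a fair summary of the strategy carried out in \cite{FeJ} (itself modeled on \cite{FS}): the decisive simplification at even frequencies is that every blow-up $q$ is a harmonic polynomial, so $\lap w=\lap u$ and the identities you wrote ($\int w_r\lap w_r\ge 0$ and $\int (x\cdot\nabla w_r)\lap w_r=\lambda\int w_r\lap w_r$) hold exactly, yielding clean monotonicity of $\phi_\gamma(\cdot,w)$ without the nonlinear error bounds of \Cref{lem:keyestimateformonotonicity}. One inaccuracy worth flagging: you write the first blow-up as $q=\tau\psi_\lambda\circ S$, i.e.\ a 2D profile, but at even-frequency points the blow-up $q$ is in general an arbitrary $\lambda$-homogeneous harmonic polynomial nonnegative on $\{x_n=0\}$ (this is the content of \cite{GP}), and the statement of \Cref{thm:fernandezj} reflects this. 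Two-dimensionality of $q$ is only obtained \emph{a posteriori} in the proof of \Cref{thm:mainintro} by additionally removing $\Lambda^*_\lambda$. This does not damage the substance of your sketch, since it is harmonicity of $q$, not its low dimensionality, that drives the exact frequency monotonicity; but your second-blow-up classification and the orthogonality argument ruling out $\lambda^*=\lambda$ must be run for general harmonic $q$, not just 2D ones, and the linear problem solved by $Q$ is posed on $\{q=0,x_n=0\}$, which need not be a half-hyperplane.
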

    We are ready to prove the \Cref{thm:mainintro}.
    \begin{proof}[Proof of \Cref{thm:mainintro}]
        We define the set $\Sigma$ as
        \begin{equation}
            \Sigma(u):=\Lambda_{\text{\upshape{other}}}(u)\cup\Sigma_{\text{even}}(u)\cup\bigcup_{k\geq 1} \Lambda_k^*(u) \cup \bigcup_{k\geq 1}\Lambda_{2k-1/2}^*(u).
        \end{equation}
        Notice that
        \begin{itemize}
            \item $\dim_{\mathcal H} \Lambda_{\text{\upshape{other}}} \leq n-3$, by \Cref{prop:dimredotherfreq},
            \item $\dim_{\mathcal H} \Sigma_{\text{\upshape{even}}} \leq n-3$, by \Cref{thm:fernandezj},
            \item for each $\lambda\in \{\tfrac 3 2,\tfrac 7 2, \tfrac{11}2 \ldots\}\cup\{1,2,3,\ldots\}$, $\dim_{\mathcal H} \Lambda^*_\lambda \leq n-3$, by \Cref{prop:dimensionreduction},
        \end{itemize}
        hence $\dim_{\mathcal H} \Sigma\leq n-3$.

        Now, given some $x_\circ \in\Lambda(u)\setminus\Sigma(u)$, set $\lambda:=\phi(0+,u(x_\circ+\cdot)$. We want to prove \cref{eq:expansionintro}, there are only three cases.
        
        If $\lambda$ is even then the second part of \Cref{thm:fernandezj} gives \eqref{eq:expansionintro} for a certain $q$, because $x_\circ\in\Lambda_\lambda(u)\setminus\Sigma_{\text{\upshape{even}}}$. But on the other hand $x_\circ\in\Lambda_\lambda(u)\setminus\Lambda_\lambda^*(u)$, so \Cref{lem:closeness by frequency} gives that $q$ is 2D, i.e., $q=\tau\psi_\lambda\circ S$ for some $\tau\geq 0$ and $S\in O(n-1)$.
        
        If $\lambda$ is odd we apply \Cref{thm:savinyu} and obtain that assumption \eqref{eq:extradecayassumption} is satisfied. Hence we can apply \Cref{prop:decayimprovement} and obtain \eqref{eq:expansionintro}.
        
        If $\lambda$ is an half-odd-integer we can apply \Cref{prop:dichotomy} since $x_\circ\in\Lambda_\lambda(u)\setminus\Lambda_\lambda^*(u)$. This gives that assumption \cref{eq:extradecayassumption} holds and so we can apply \Cref{prop:decayimprovement} and obtain \eqref{eq:expansionintro}.
   \end{proof}
   We can also prove \Cref{cor:covering}.
   \begin{proof}[Proof of \Cref{cor:covering}]
       Let $u$ solve \eqref{eq:thinobstacle} and let $\lambda\in \{\tfrac 3 2,\tfrac 7 2,\ldots\}\cup\{1,2,3,4,\ldots\}$. We partition $B_{1/2}\cap(\Lambda_\lambda(u)\setminus\Sigma(u))=\cup_{N\geq 1}E_N$ where $x_\circ\in E_N$ if and only if
       \begin{equation*}
        \sup_{B_r}|u(x_\circ+\cdot)- q_{x_\circ}|\leq N r^{\lambda+1}\text{ for all }r\in(0,1/2),\quad \|q_{x_\circ}\|_{L^2(\partial B_1)}>\frac 1 N,
       \end{equation*}
       where $q_{x_\circ}$ is the 2D blow-up given by \Cref{thm:mainintro}. Let us work out case $\lambda\in\{\tfrac 3 2,\tfrac 7 2,\tfrac{11}{2},\ldots\}$, the other cases being completely analogous. We define $e_{\circ}\in\R^{n-1}$ with $|e_{x_\circ}|=1$ by the property
       \begin{equation*}
           q_{x_\circ}(x',0)= c_{x_\circ}(e_{x_\circ}\cdot x')_+^\lambda\text{ for all }x'\in\R^{n-1},
       \end{equation*}
       where $c_{x_\circ}$ is comparable to $\|q_{x_\circ}\|_{L^2(\partial B_1)}\ge \frac 1 N$.
       We claim that the map $ E_N\to \R\times\R^{n-1}$ defined by $x_\circ\mapsto (0,e_{x_\circ})$ is a $C^{1,1}$ Whitney data. This implies that there exist $f\in C^{1,1}(\R^{n-1})$ such that, for all $x_\circ\in E_N$, $f(x_\circ)=0$ and $\nabla f(x_\circ)=e_{x_\circ}$. Applying the implicit function theorem to $f$, we conclude that $E_N$ is locally covered by $C^{1,1}$ manifolds of dimension $n-2$.
       
       We prove the claim. Take any $x_0,x_1\in E_N$ and set $r_\circ:=3|x_0-x_1|$, without loss of generality we can assume $r_\circ<1/4$. By definition of $E_N$ and the triangular inequality we find
       \[
       \sup_{B_r}|q_{x_0}-q_{x_1}(\cdot -(x_0-x_1))|\leq \sup_{B_r}|u(x_0+\cdot)-q_{x_0}|+\sup_{B_{2r}}|u(x_1+\cdot)-q_{x_1}|\leq 3N r^{\lambda+1}.
       \]
       So, scaling and setting $b:=e_{x_1}\cdot(x_0-x_1)/r$ we get
       \[
       \sup_{x\in B_1}|c_{x_\circ}(e_{x_0}\cdot x')_+^\lambda - c_{x_1}(e_{x_1}\cdot x' - b)_+^\lambda|\leq 3N r,
       \]
       since by construction $|b|\leq 1/3$ this is easily implies \begin{equation}\label{eq:whitney}
           |e_{x_0}-e_{x_1}|+|b|\leq C N^2 r
       \end{equation} for some constant $C=C(n,\lambda)$. In fact, the map sending $\R\times\mathbb S^{n-2}\times \R \to C(\overline B_1)$ defined by $(\tau,e,b)\mapsto \tau(e\cdot x'-b)_+^\lambda$ is a bi-lipschitz homeomorphism with its image, when restricted to $[1/2,2]\times\mathbb S^{n-2}\times[-1/3,1/3]$. We can use the classical $C^1$ Whitney extension thanks to the bound on $b$ in \eqref{eq:whitney}. The resulting function will have Lipschitz gradient because of the bound on $|e_{x_0}-e_{x_1}|$ in \eqref{eq:whitney}. 
   \end{proof}

\end{document}